\documentclass[oneside,english,a4paper]{amsart}
\usepackage[T1]{fontenc}
\usepackage[latin9]{inputenc}
\usepackage{amstext}
\usepackage{amsthm}
\usepackage{amssymb}

\makeatletter
\numberwithin{equation}{section}
\numberwithin{figure}{section}
\theoremstyle{plain}
\newtheorem{thm}{\protect\theoremname}
\theoremstyle{remark}
\newtheorem{rem}[thm]{\protect\remarkname}
\theoremstyle{plain}
\newtheorem{lem}[thm]{\protect\lemmaname}
\theoremstyle{definition}
\newtheorem{defn}[thm]{\protect\definitionname}
\theoremstyle{plain}
\newtheorem{prop}[thm]{\protect\propositionname}
\theoremstyle{remark}
\newtheorem*{rem*}{\protect\remarkname}


\makeatother

\usepackage{babel}
\providecommand{\definitionname}{Definition}
\providecommand{\lemmaname}{Lemma}
\providecommand{\propositionname}{Proposition}
\providecommand{\remarkname}{Remark}
\providecommand{\theoremname}{Theorem}

\begin{document}
\title[doubly perturbed Yamabe problems]{Compactness and blow up results for doubly perturbed Yamabe problems
on manifolds with non umbilic boundary}
\author{Marco G. Ghimenti}
\address{M. G. Ghimenti, \newline Dipartimento di Matematica Universit\`a di Pisa
Largo B. Pontecorvo 5, 56126 Pisa, Italy}
\email{marco.ghimenti@unipi.it}

\author{Anna Maria Micheletti}
\address{A. M. Micheletti, \newline Dipartimento di Matematica Universit\`a di Pisa
Largo B. Pontecorvo 5, 56126 Pisa, Italy}
\email{a.micheletti@dma.unipi.it.}

\keywords{Non umbilic boundary, Yamabe problem, Compactness, Blow up analysis}
\subjclass[2000]{35J65, 53C21}
\maketitle

\begin{quote}
\begin{center}
\textsc{Dedicated to Norman Dancer on the occasion of his 75th birthday}
\par\end{center}
\end{quote}

\begin{abstract}
We study the stability of compactness of solutions for the Yamabe
boundary problem on a compact Riemannian manifold with non umbilic
boundary. We prove that the set of solutions of Yamabe boundary problem
is a compact set when perturbing the mean curvature of the boundary
from below and the scalar curvature with a function whose maximum
is not too positive. In addition, we prove the counterpart of the
stability result: there exists a blowing up sequence of solutions
when we perturb the mean curvature from above or the mean curvature
from below and the scalar curvature with a function with a large positive
maximum.
\end{abstract}

\section{Introduction}

Let $(M,g)$, a smooth, compact Riemannian manifold of dimension $n\ge7$
with non umbilic boundary. We recall that the boundary of $M$ is
respectively called umbilic if the trace-free second fundamental form
of $\partial M$ is non zero everywhere. Here we study the linearly
perturbed problem
\begin{equation}
\left\{ \begin{array}{cc}
-\Delta_{g}u+\frac{n-2}{4(n-1)}R_{g}u+\varepsilon_{1}\alpha u=0 & \text{ in }M\\
\frac{\partial u}{\partial\nu}+\frac{n-2}{2}h_{g}u+\varepsilon_{2}\beta u=(n-2)u^{\frac{n}{n-2}} & \text{ on }\partial M
\end{array}\right..\label{eq:Prob-2}
\end{equation}
Where $\Delta_{g}$ is the Laplace-Beltrami operator and $\nu$ denotes
the outer normal. Also, $\varepsilon_{1},\varepsilon_{2}$ are positive
parameters and $\alpha,\beta:M\rightarrow\mathbb{R}$ are smooth functions.
We can restate Problem (\ref{eq:Prob-2}) in the more compact form
\[
\left\{ \begin{array}{cc}
L_{g}u-\varepsilon_{1}\alpha u=0 & \text{ in }M\\
B_{g}u-\varepsilon_{2}\beta u+(n-2)u^{\frac{n}{n-2}}=0 & \text{ on }\partial M
\end{array}\right.,
\]
 where $L_{g}=\Delta_{g}-\frac{n-2}{4(n-1)}R_{g}$ and $B_{g}=-\frac{\partial}{\partial\nu}-\frac{n-2}{2}h_{g}$.

Problem (\ref{eq:Prob-2}) is the perturbed version of the Yamabe
boundary problem when the target metric has zero scalar curvature,
that is, given a compact Riemannian manifold with boundary, finding
a Riemannian metric, conformal to the original one, with zero scalar
curvature and constant boundary mean curvature. This represents an
extension of the Yamabe problem on manifold with boundary and, since
the target metric is conformally flat, also a generalization of the
Riemann mapping theorem to higher dimensions. Solving this problem
is equivalent to find a positive solution of the equation 
\begin{equation}
\left\{ \begin{array}{cc}
-\Delta_{g}u+\frac{n-2}{4(n-1)}R_{g}u=0 & \text{ in }M\\
\frac{\partial u}{\partial\nu}+\frac{n-2}{2}h_{g}u=(n-2)u^{\frac{n}{n-2}} & \text{ on }\partial M
\end{array}\right.\label{eq:Prob-2-2}
\end{equation}
which is, as noticed before, the unperturbed version of (\ref{eq:Prob-2}).
In this paper we study if the perturbation term affect the property
of solutions. In particular we want to investigate if the compactness
of the set of the solution of the problem holds true for the perturbed
problem. Our main results are the following. 
\begin{thm}
\label{thm:main}Let $(M,g)$ a smooth, $n$-dimensional Riemannian
manifold of positive type with regular boundary $\partial M$. Suppose
that $n\ge7$ and that $\pi(x)$, the trace free second fundamental
form of $\partial M$, is non zero everywhere. 

Let $\alpha,\beta:M\rightarrow\mathbb{R}$ smooth functions such that
$\beta<0$ on $\partial M$ and ${\displaystyle \max_{q\in\partial M}}\{\alpha(q)-\frac{n-6}{4(n-1)(n-2)^{2}}\|\pi(q)\|^{2}\}<0$.
Then, there exist two constants $C>0$ and $0<\bar{\varepsilon}<1$
such that, for any $0\le\varepsilon_{1},\varepsilon_{2}\le\bar{\varepsilon}$
and for any $u>0$ solution of (\ref{eq:Prob-2}), it holds 
\[
C^{-1}\le u\le C\text{ and }\|u\|_{C^{2,\eta}(M)}\le C
\]
for some $0<\eta<1$. The constant $C$ does not depend on $u,\varepsilon_{1},\varepsilon_{2}$. 
\end{thm}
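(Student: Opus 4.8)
The plan is to argue by contradiction via a blow-up analysis, following the scheme developed for the Yamabe problem on manifolds with non-umbilic boundary (Marques, and subsequent refinements by Almaraz and others). Suppose the conclusion fails: then there exist sequences $\varepsilon_{1,k},\varepsilon_{2,k}\to 0$ (or bounded by $\bar\varepsilon$) and positive solutions $u_k$ of (\ref{eq:Prob-2}) with $\max_M u_k\to +\infty$. Since the linear perturbations are lower-order and $\varepsilon_{i,k}$ are small, the standard a priori estimates away from concentration points still apply, and one shows that after passing to a subsequence the $u_k$ develop a finite number of isolated simple blow-up points, all lying on the boundary $\partial M$ (interior blow-up is excluded because the interior equation has no critical nonlinearity, and more carefully because the trace nonlinearity forces concentration on $\partial M$). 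The first block of work is therefore to set up Fermi coordinates near a blow-up point $x_0\in\partial M$, rescale $u_k$ to $v_k(y)=\mu_k^{(n-2)/2}u_k(\mu_k y)$ with $\mu_k=u_k(x_k)^{-2/(n-2)}$, and prove convergence of $v_k$ in $C^2_{loc}$ of the half-space to the standard bubble $U$, the solution of the limiting problem $-\Delta U=0$ in $\mathbb R^n_+$, $\partial U/\partial\nu=(n-2)U^{n/(n-2)}$ on $\partial\mathbb R^n_+$.

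The heart of the argument is the local Pohozaev identity. One integrates the equation for $u_k$ against the Pohozaev vector field $(y\cdot\nabla + \frac{n-2}{2})u_k$ over a small half-ball $B_\rho^+(x_k)$ in Fermi coordinates, and expands all geometric quantities (the metric $g$, the scalar curvature $R_g$, the mean curvature $h_g$, the second fundamental form) in Taylor series around $x_0$. The key structural fact, specific to the non-umbilic case and the dimension range $n\ge7$, is that the leading contribution from the geometry is the term proportional to $\|\pi(x_0)\|^2$, carrying a strictly negative sign — this is precisely the mechanism that makes the unperturbed problem compact (Almaraz's theorem). The perturbation terms $\varepsilon_1\alpha u^2$ and $\varepsilon_2\beta u^2$ contribute additional boundary integrals; after the blow-up rescaling these produce terms of the form $\varepsilon_{1,k}\alpha(x_0)\mu_k^{2}\int U^2$ and $\varepsilon_{2,k}\beta(x_0)\mu_k\int_{\partial}U^2$, modulated by the signs of $\alpha$ and $\beta$. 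The sign hypotheses — $\beta<0$ on $\partial M$ and $\max_{\partial M}\{\alpha-\frac{n-6}{4(n-1)(n-2)^2}\|\pi\|^2\}<0$ — are exactly what is needed so that, combining the geometric term and the perturbative terms, the right-hand side of the Pohozaev balance has a definite sign that is incompatible with the left-hand side (which, for an isolated simple blow-up, is nonnegative up to negligible errors, being essentially controlled by a positive multiple of the "mass"-type term plus $o(1)$).

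Concretely, the steps I would carry out, in order, are: (i) recall/establish the isolated simple blow-up structure and the sign and asymptotics of the Weyl-type coefficient from the excerpt's hypotheses and the cited background; (ii) derive the local Pohozaev identity for (\ref{eq:Prob-2}) in Fermi coordinates, keeping explicit track of the two perturbation integrals; (iii) insert the blow-up expansion $u_k = \mu_k^{-(n-2)/2}(U + \text{error})$ and compute the leading order of each term, obtaining an identity of the schematic form
\[
c_n\,\varepsilon_{1,k}\,\mu_k^{2}\,\alpha(x_0) + c_n'\,\varepsilon_{2,k}\,\mu_k\,\beta(x_0) - d_n\,\mu_k^{2}\,\|\pi(x_0)\|^2 + \text{h.o.t.} \;=\; \text{(nonnegative quantity)} + o(\mu_k^{2}),
\]
with $c_n,c_n',d_n>0$ explicit; (iv) use $\beta(x_0)<0$ to discard (or exploit the favourable sign of) the $\varepsilon_{2,k}$ term, and use the combined hypothesis on $\alpha-\frac{n-6}{4(n-1)(n-2)^2}\|\pi\|^2$ together with the smallness of $\varepsilon_{1,k}$ to conclude the left side is strictly negative for $k$ large while the right side is $\ge o(\mu_k^2)$, a contradiction; (v) upgrade the pointwise bound $C^{-1}\le u\le C$ to the $C^{2,\eta}$ estimate by elliptic regularity for the boundary value problem, using that the nonlinearity and the coefficients are then uniformly controlled.

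The main obstacle I anticipate is item (iii): getting the perturbation integrals to appear at exactly the same order in $\mu_k$ as the geometric $\|\pi\|^2$-term, so that the signs genuinely compete, requires a delicate bookkeeping of the interplay between the blow-up rate, the conformal normalization, and the Taylor expansion of the metric. In particular one must verify that the $\varepsilon_1\alpha$ term — which lives in the interior integral — produces a contribution of order $\varepsilon_{1,k}\mu_k^2$ (not a lower order), which depends on the decay rate of $U$ and on $n\ge7$; and one must ensure there is no hidden positive geometric term of the same order $\mu_k^2$ with the "wrong" sign that would need to be absorbed. A secondary technical point is ruling out multiple or non-simple blow-up and interior blow-up uniformly in $\varepsilon_1,\varepsilon_2\in[0,\bar\varepsilon]$; since the perturbation is a small bounded lower-order term this should follow from the unperturbed arguments with routine modifications, but it must be checked that $\bar\varepsilon$ can be chosen uniformly.
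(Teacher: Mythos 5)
Your overall strategy coincides with the paper's: argue by contradiction, reduce to an isolated simple boundary blow-up point, and run the local Pohozaev identity in Fermi coordinates so that the $\|\pi(x_0)\|^{2}$ term competes at order $\delta_i^{2}$ with the $\varepsilon_{1}\alpha$ term, the hypothesis $\max_{\partial M}\{\alpha-\frac{n-6}{4(n-1)(n-2)^{2}}\|\pi\|^{2}\}<0$ together with $\varepsilon_{1}\le\bar\varepsilon<1$ fixing the sign, while the flux term is $O(\delta_i^{n-2})=o(\delta_i^{2})$ by the decay estimates; the conclusion $\|\pi(x_0)\|=0$ then contradicts non-umbilicity (this is, up to sign conventions, Propositions \ref{prop:segno} and \ref{prop:7.1} combined with the splitting result and Proposition \ref{prop:isolato->semplice}).

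There is, however, a genuine gap in your treatment of $\varepsilon_{2}$. First, you may not assume $\varepsilon_{2,k}\to0$: the parameters only lie in $[0,\bar\varepsilon]$ and the constant must be uniform, so the contradiction sequence a priori carries $\varepsilon_{2,k}$ of fixed size. Second, and more substantively, $\beta<0$ is not used merely to ``discard a favourably signed term'' in the final balance. The boundary perturbation $\varepsilon_{2}\beta u$ pollutes the blow-up profile at order $\varepsilon_{2,i}\delta_i$ (Lemma \ref{lem:coreLemma} gives $|v_i-U-\delta_i\gamma_{x_i}|\lesssim\delta_i^{2}+\varepsilon_{1,i}\delta_i^{2}+\varepsilon_{2,i}\delta_i$), and $\varepsilon_{2,i}\delta_i$ is not $o(\delta_i^{2})$ unless one proves it is. The paper needs exactly this: Proposition \ref{prop:eps-i} shows that blow-up forces $\varepsilon_{2,i}\to0$, and Lemma \ref{lem:taui} upgrades this to $\varepsilon_{2,i}\le C\delta_i$ by testing the equation for $w_i$ against $j_n$, where the constant $B>0$ produced by $\beta(x_0)<0$ is what yields the contradiction. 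Only after that do the refined estimates of Proposition \ref{prop:stimawi} hold with $O(\delta_i^{2})$ errors, which is what permits the coefficient of $\delta_i^{2}\|\pi(x_i)\|^{2}$ (Lemma \ref{lem:R(U,U)}) to be computed exactly and matched against the constant $\frac{n-6}{4(n-1)(n-2)^{2}}$ in the hypothesis; your step (iii), as written, would carry uncontrolled error terms of size comparable to $\bar\varepsilon\,\delta_i^{2}$ or worse. Relatedly, the ``hidden geometric term of the wrong sign'' you worry about is resolved by expanding around $U+\delta_i\gamma_{x_i}$ rather than $U$, where $\gamma_q$ solves (\ref{eq:vqdef}); its contribution $-\frac12\delta_i^{2}\int\gamma_q\Delta\gamma_q$ is nonnegative by (\ref{new}), so it reinforces rather than spoils the sign. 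Without these two ingredients the schematic identity in your step (iii) cannot be justified, so the plan as stated is incomplete at its central step.
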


\begin{thm}
\label{thm:main2}Let $(M,g)$ a smooth, $n$-dimensional Riemannian
manifold of positive type with regular boundary $\partial M$. Suppose
that $n\ge7$ and that the trace free second fundamental form of $\partial M$,
is non zero everywhere. Let $\alpha,\beta:M\rightarrow\mathbb{R}$
smooth functions. 
\begin{itemize}
\item If $\beta>0$ on $\partial M$ then for $\varepsilon_{1},\varepsilon_{2}>0$
small enough there exists a sequence of solutions $u_{\varepsilon_{1},\varepsilon_{2}}$
of (\ref{eq:Prob-2}) which blows up at a suitable point of $\partial M$
as $(\varepsilon_{1},\varepsilon_{2})\rightarrow(0,0)$.
\item If $\beta<0$ on $\partial M$, $\varepsilon_{1}=1$, $\alpha>0$
on $M$ and ${\displaystyle \inf_{q\in\partial M}\alpha(q)+\frac{1}{B}\varphi(q)>0}$,
then for $\varepsilon_{2}>0$ small enough there exists a sequence
of solutions $u_{\varepsilon_{2}}$ of (\ref{eq:Prob-2}) which blows
up at a suitable point of $\partial M$ as $\varepsilon_{2}\rightarrow0$.
\end{itemize}
Here $B$ and $\varphi(q)$ are defined in Lemma \ref{lem:espansione}.
\end{thm}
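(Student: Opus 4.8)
The plan is to realize the blowing-up solutions through a Lyapunov--Schmidt finite dimensional reduction. Fix a point $q\in\partial M$ and work in Fermi coordinates centered at $q$; let $U$ be the standard bubble on the half space, i.e. the positive solution of $-\Delta U=0$ in $\mathbb{R}^n_+$, $\partial U/\partial\nu=(n-2)U^{n/(n-2)}$ on $\partial\mathbb{R}^n_+$, and set $U_\delta(z)=\delta^{-(n-2)/2}U(z/\delta)$. Transplanting $U_\delta$ to $M$ by means of the Fermi chart and a cut-off produces a first approximation $W_{\delta,q}$. Since $\partial M$ is not umbilic, the first order expansion of $g$ in these coordinates carries a nonvanishing term built from the trace-free second fundamental form $\pi(q)$, so $W_{\delta,q}$ solves \eqref{eq:Prob-2} only up to an error that is not small enough for the reduction; as in the unperturbed non umbilic theory one adds a correction term $V_{\delta,q}$, obtained by solving the linearized bubble equation with that first order error as right hand side, so that $W_{\delta,q}+V_{\delta,q}$ is an accurate ansatz. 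One then looks for a solution of the form $u=W_{\delta,q}+V_{\delta,q}+\phi$, with $\phi$ in the $H^1$-orthogonal complement $K_{\delta,q}^\perp$ of the approximate kernel $K_{\delta,q}=\mathrm{span}\{\partial_\delta W_{\delta,q},\partial_{q_1}W_{\delta,q},\dots,\partial_{q_{n-1}}W_{\delta,q}\}$.

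The second step is the invertibility, uniform in $\delta$ small, in $q\in\partial M$ and in $\varepsilon_1,\varepsilon_2$ small, of the projection onto $K_{\delta,q}^\perp$ of the linearized operator at $W_{\delta,q}+V_{\delta,q}$: the perturbation terms $\varepsilon_1\alpha u$, $\varepsilon_2\beta u$ are lower order here, so this rests essentially on the nondegeneracy of the transplanted bubble. A contraction mapping argument in $K_{\delta,q}^\perp$ then solves the auxiliary equation and produces $\phi=\phi_{\delta,q,\varepsilon}$, with a quantitative smallness estimate controlled by the size of the ansatz error and with $C^1$ dependence on $(\delta,q)$. By the standard reduction principle, $u=W_{\delta,q}+V_{\delta,q}+\phi_{\delta,q,\varepsilon}$ solves \eqref{eq:Prob-2} if and only if $(\delta,q)$ is a critical point of the reduced functional $J_\varepsilon(\delta,q):=E_\varepsilon(W_{\delta,q}+V_{\delta,q}+\phi_{\delta,q,\varepsilon})$, with $E_\varepsilon$ the energy associated with \eqref{eq:Prob-2}. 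Once such a critical point is found, positivity of $u$ and its bound in $C^{2,\eta}(M)$ follow a posteriori from the maximum principle and elliptic regularity, while $u$ concentrates at $q$ (hence blows up) because $\delta\to0$.

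The heart of the matter is the expansion of $J_\varepsilon$, which is Lemma \ref{lem:espansione}: with $B>0$ the constant and $\varphi$ the function introduced there, and some coefficient $\theta(q)$ coming from the boundary term $\int_{\partial M}\beta u^2$ in $E_\varepsilon$ (so $\theta$ carries the sign of $\beta$ on $\partial M$), it has the schematic form
\[
J_\varepsilon(\delta,q)=c_0+B\,\delta^2\Big(\varepsilon_1\alpha(q)+\tfrac1B\varphi(q)\Big)+\varepsilon_2\,\delta\,\theta(q)+o\big(\delta^2+\varepsilon_2\delta\big)
\]
uniformly in $q$ (the hypothesis $n\ge7$ being what makes the relevant integrals converge and keeps the expansion free of logarithmic terms), where $\varphi$ is the genuinely geometric non umbilic contribution, proportional to $\|\pi(q)\|^2$, hence bounded away from $0$ on the compact boundary. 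In the first case, pick $\varepsilon_1$ so small that the $\delta^2$-coefficient $B\varepsilon_1\alpha(q)+\varphi(q)$ is negative uniformly in $q$; since $\beta>0$ makes $\theta>0$ (also bounded away from $0$), $\delta\mapsto J_\varepsilon(\delta,q)$ is a slightly perturbed downward parabola with a maximum at $\delta^\ast(q)=-\varepsilon_2\theta(q)/\big(2(B\varepsilon_1\alpha(q)+\varphi(q))\big)+o(\varepsilon_2)>0$, with $\delta^\ast\to0$ as $\varepsilon_2\to0$; then $q\mapsto\max_\delta J_\varepsilon(\delta,q)$ is continuous on the compact manifold $\partial M$, exceeds $c_0$, and therefore attains its maximum at an interior point $(\delta^\ast,q^\ast)$, which is a critical point of $J_\varepsilon$. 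In the second case $\varepsilon_1=1$ is fixed, so the $\delta^2$-coefficient is $B\big(\alpha(q)+\tfrac1B\varphi(q)\big)$, which is strictly positive and bounded below by the hypothesis $\inf_{\partial M}(\alpha+\tfrac1B\varphi)>0$; now $\delta\mapsto J_\varepsilon(\delta,q)$ is an upward parabola, $\beta<0$ gives $\theta<0$, so it has a minimum at $\delta^\ast(q)>0$ with $\delta^\ast\to0$ as $\varepsilon_2\to0$, and $q\mapsto\min_\delta J_\varepsilon(\delta,q)$ lies below $c_0$ and attains its minimum at an interior point, again a critical point of $J_\varepsilon$.

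The main obstacle is building a sufficiently accurate ansatz in the non umbilic setting and making the energy expansion sharp: the correction $V_{\delta,q}$ must be constructed and, above all, its contribution to $J_\varepsilon$ computed, because the geometric term it generates (the $\|\pi(q)\|^2$ encoded in $\varphi$) sits precisely at the order $\delta^2$ that has to be weighed against the interior perturbation $\varepsilon_1\alpha(q)\delta^2$ and the boundary perturbation $\varepsilon_2\theta(q)\delta$, and controlling the remainder $o(\delta^2+\varepsilon_2\delta)$ uniformly in $q\in\partial M$ is the delicate part of Lemma \ref{lem:espansione}. Hand in hand with this, all quantitative steps — invertibility of the linearization, smallness and $C^1$ regularity of $\phi_{\delta,q,\varepsilon}$, and the expansion of $J_\varepsilon$ — must hold uniformly in the concentration point and in the small parameters, so that the critical point delivered by the reduction is a genuine solution of \eqref{eq:Prob-2} for all small $(\varepsilon_1,\varepsilon_2)$, respectively all small $\varepsilon_2$.
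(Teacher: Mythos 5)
Your proposal follows essentially the same route as the paper: the same Ljapunov--Schmidt reduction with the corrected bubble ansatz $W_{\delta,q}+\delta V_{\delta,q}+\phi$, the same energy expansion (Lemma \ref{lem:espansione}), and the same maximization (resp.\ minimization) of the resulting parabola in the concentration parameter, the paper merely parametrizing $\delta=\lambda\varepsilon_{2}$ so that your $\delta^{\ast}=O(\varepsilon_{2})$ becomes a bounded $\lambda$. The only inessential imprecision is that $\varphi(q)$ is not proportional to $\|\pi(q)\|^{2}$ but equals $\tfrac12\int_{\mathbb{R}^{n}_{+}}\gamma_{q}\Delta\gamma_{q}\,dy$ minus a positive multiple of $\|\pi(q)\|^{2}$; since the first term is nonpositive and $\pi$ never vanishes, your conclusion that $\varphi$ is strictly negative and bounded away from zero still holds.
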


We remark that in the above Theorem \ref{thm:main2}, $B$ is strictly
positive, $\varphi(q)$ is strictly negative, and both are completely
determined by $(M,g)$. 

The result of Theorem \ref{thm:main} (and its counterpart Theorem
\ref{thm:main2}) is somewhat unexpected: in classical Yamabe problem
\cite{dru,DH,DHR} the compactness of solution is guaranteed as soon
as $\alpha$ is negative. In a forthcoming paper we prove that also
for boundary Yamabe problem on manifold with umbilic boundary compactness
is granted when $\alpha$ is negative while for $\alpha$ positive
everywhere there exists a blowing up sequence of solutions. So, this
is an example in which the strong analogy between classical Yamabe
problem and boundary Yamabe problem breaks down. 

The boundary Yamabe problem was firstly introduced by Escobar in \cite{Es}.
Existence results for (\ref{eq:Prob-2-2}) were proved by Escobar
\cite{Es}, Marques \cite{M1}, Almaraz \cite{A3}, Brendle and Chen
\cite{BC}, Mayer and Ndiaye \cite{MN}. Solutions of (\ref{eq:Prob-2-2})
could be found by minimization of the quotient
\[
Q(M,\partial M):=\inf_{u\in H^{1}\smallsetminus0}\frac{\int\limits _{M}\left(|\nabla u|^{2}+\frac{n-2}{4(n-1)}R_{g}u^{2}\right)dv_{g}+\int\limits _{\partial M}\frac{n-2}{2}h_{g}u^{2}d\sigma_{g}}{\left(\int\limits _{\partial M}|u|^{\frac{2(n-1)}{n-2}}d\sigma_{g}\right)^{\frac{n-2}{n-1}}}.
\]
In particular, the solution of is unique, up to symmetries, when $-\infty<Q\le0$
while multiplicity results hold when $Q>0$. Manifolds for which $Q>0$
are called of \emph{positive type,} and it is natural to ask, in that
case, when the full set of the solutions of (\ref{eq:Prob-2-2}) forms
a $C^{2}$-compact set. This is in complete analogy with classical
Yamabe problem. In addition, the celebrated strategy of Khuri, Marques
and Schoen \cite{KMS} to prove compactness of solutions of Yamabe
problem up to dimension $n=24$ can be succesfully adapted to Problem
(\ref{eq:Prob-2-2}). Indeed, with this method compactness has been
proved firstly in the case of locally flat manifolds not conformally
equivalent to euclidean balls in \cite{FA}, then for manifold with
non umbilic boundary in \cite{Al}, and, recently, for manifold with
umbilic boundary on which the Weyl tensor does not vanish, in \cite{GM20,GMsub}.
These results have been successively extended, but an exhaustive list
of reference of compactness results is beyond the scope of this introduction.
In \cite{A2} Almaraz proved that, for $n\ge25$, it is possible to
construct umbilic boundary manifolds, not conformally equivalent to
euclidean balls, for which Problem (\ref{eq:Prob-2-2}) admits a non
compact set of solutions. It is conjectured that also for boundary
Yamabe the critical dimension is $n=24$, but compactness for dimension
$n\le24$ is not yet proved in all generality.

Another parallelism arises studying stability of Yamabe problem with
respect of small perturbations of curvatures. For classical Yamabe
problem, Druet, Hebey and Robert \cite{dru,DH,DHR} proved that the
set of solutions of $-\Delta_{g}u+\frac{n-2}{4(n-1)}a(x)u=cu^{\frac{n+2}{n-2}}\text{ in }M$
is still compact if $a(x)\le R_{g}(x)$ on $M$. Thus they claim that
the Yamabe problem is \emph{stable} with respect of perturbation of
scalar curvature from below. On the other hand, they found counterexamples
to compactness, and so \emph{instability,} when $a(x)$ is greater
than $R_{g}(x)$. In \cite{GMdcds} the same problem is studied in
the case of boundary Yamabe equation by perturbing the mean curvature
term, and the same compactness versus blow up phenomenon appeared.
So, a first analogy between the role of scalar curvature in classical
case and mean curvature in boundary case is established. An analogy
between the role of scalar curvature in classical and boundary Yamabe
problem when the boundary is umbilic will be investigated by the authors
in a forthcoming paper.

As far as we know, Theorem \ref{thm:main} is the first case in which
stability is possible when pertrubing a curvature from above, and,
therefore, in which the parallelism between classical and boundary
Yamabe problem is lost. The result of Theorem \ref{thm:main} is strictly
related to non umbilicity of the boundary. In fact, the trace-free
second fundamental form competes with the perturbation of the scalar
curvature. Thus, when the tensor does not vanish, it could compensate
a small positive perturbation. This is clearly observable in Proposition
\ref{prop:segno}, which is a key tool to prove the compactness result
(and for the blow-up counterpart, in Lemma \ref{lem:espansione}).

The paper is organized as follows. Hereafter we recall basic definitions
and preliminary notions useful to achieve the result. Section \ref{sec:The-compactness-result}
is devoted to the proof of the compactness theorem, while in Section
\ref{sec:The-non-compactness} we prove the non compactness result.

\subsection{Notations and preliminary definitions}
\begin{rem}[Notations]
We will use the indices $1\le i,j,k,m,p,r,s\le n-1$ and $1\le a,b,c,d\le n$.
Moreover we use the Einstein convention on repeated indices. We denote
by $g$ the Riemannian metric, by $R_{abcd}$ the full Riemannian
curvature tensor, by $R_{ab}$ the Ricci tensor and by $R_{g}$ and
$h_{g}$ respectively the scalar curvature of $(M,g)$ and the mean
curvature of $\partial M$. The bar over an object (e.g. $\bar{R}_{g}$)
will means the restriction to this object to the metric of $\partial M$ 

On the half space $\mathbb{R}_{+}^{n}=\left\{ y=(y_{1},\dots,y_{n-1},y_{n})\in\mathbb{R}^{n},\!\ y_{n}\ge0\right\} $
we set $B_{r}(y_{0})=\left\{ y\in\mathbb{R}^{n},\!\ |y-y_{0}|\le r\right\} $
and $B_{r}^{+}(y_{0})=B_{r}(y_{0})\cap\left\{ y_{n}>0\right\} $.
When $y_{0}=0$ we will use simply $B_{r}=B_{r}(y_{0})$ and $B_{r}^{+}=B_{r}^{+}(y_{0})$.
On the half ball $B_{r}^{+}$ we set $\partial'B_{r}^{+}=B_{r}^{+}\cap\partial\mathbb{R}_{+}^{n}=B_{r}^{+}\cap\left\{ y_{n}=0\right\} $
and $\partial^{+}B_{r}^{+}=\partial B_{r}^{+}\cap\left\{ y_{n}>0\right\} $.
On $\mathbb{R}_{+}^{n}$ we will use the following decomposition of
coordinates: $(y_{1},\dots,y_{n-1},y_{n})=(\bar{y},y_{n})=(z,t)$
where $\bar{y},z\in\mathbb{R}^{n-1}$ and $y_{n},t\ge0$.

Fixed a point $q\in\partial M$, we denote by $\psi_{q}:B_{r}^{+}\rightarrow M$
the Fermi coordinates centered at $q$. We denote by $B_{g}^{+}(q,r)$
the image of $B_{r}^{+}$. When no ambiguity is possible, we will
denote $B_{g}^{+}(q,r)$ simply by $B_{r}^{+}$, omitting the chart
$\psi_{q}$.
\end{rem}

We recall also that $\omega_{n-2}$ denotes the volume of the $n-1$
dimensional unit sphere $\mathbb{S}^{n-1}$.

At last we introduce here the standard bubble ${\displaystyle U(y):=\frac{1}{\left[(1+y_{n})^{2}+|\bar{y}|^{2}\right]^{\frac{n-2}{2}}}}$
which is the unique solution, up to translations and rescaling, of
the nonlinear critical problem . 
\begin{equation}
\left\{ \begin{array}{ccc}
-\Delta U=0 &  & \text{on }\mathbb{R}_{+}^{n};\\
\frac{\partial U}{\partial y_{n}}=-(n-2)U^{\frac{n}{n-2}} &  & \text{on \ensuremath{\partial}}\mathbb{R}_{+}^{n}.
\end{array}\right.\label{eq:Udelta}
\end{equation}
Set 
\begin{equation}
j_{l}:=\partial_{l}U=-(n-2)\frac{y_{l}}{\left[(1+y_{n})^{2}+|\bar{y}|^{2}\right]^{\frac{n}{2}}}\label{eq:jl}
\end{equation}
\[
\partial_{k}\partial_{l}U=(n-2)\left\{ \frac{ny_{l}y_{k}}{\left[(1+y_{n})^{2}+|\bar{y}|^{2}\right]^{\frac{n+2}{2}}}-\frac{\delta^{kl}}{\left[(1+y_{n})^{2}+|\bar{y}|^{2}\right]^{\frac{n}{2}}}\right\} 
\]
\begin{equation}
j_{n}:=y^{b}\partial_{b}U+\frac{n-2}{2}U=-\frac{n-2}{2}\frac{|y|^{2}-1}{\left[(1+y_{n})^{2}+|\bar{y}|^{2}\right]^{\frac{n}{2}}},\label{eq:jn}
\end{equation}
we recall that $j_{1},\dots,j_{n}$ are a base of the space of the
$H^{1}$ solutions of the linearized problem 
\begin{equation}
\left\{ \begin{array}{ccc}
 & -\Delta\phi=0 & \text{on }\mathbb{R}_{+}^{n},\\
 & \frac{\partial\phi}{\partial t}+nU^{\frac{2}{n-2}}\phi=0 & \text{on \ensuremath{\partial}}\mathbb{R}_{+}^{n},\\
 & \phi\in H^{1}(\mathbb{R}_{+}^{n}).
\end{array}\right.\label{eq:linearizzato}
\end{equation}
Given a point $q\in\partial M$, we introduce now the function $\gamma_{q}$
which arises from the second order term of the expansion of the metric
$g$ on $M$ (see \ref{eq:gij-1}). The choice of this function plays
a twofold role in this paper. On the one hand, using the function
$\gamma_{q}$ we are able to perform the estimates of Lemmas \ref{lem:coreLemma},
\ref{lem:taui} and Proposition \ref{prop:stimawi}. On the other
hand, it gives the correct correction to the standard bubble in order
to perform finite dimensional reduction.

For the proof of the following Lemma we refer to \cite[Prop 5.1]{Al}
and \cite[Proposition 7]{GMP18}
\begin{lem}
\label{lem:vq}Assume $n\ge3$. Given a point $q\in\partial M$, there
exists a unique $\gamma_{q}:\mathbb{R}_{+}^{n}\rightarrow\mathbb{R}$
a solution of the linear problem 
\begin{equation}
\left\{ \begin{array}{ccc}
-\Delta\gamma=2h_{ij}(q)t\partial_{ij}^{2}U &  & \text{on }\mathbb{R}_{+}^{n};\\
\frac{\partial\gamma}{\partial t}+nU^{\frac{2}{n-2}}\gamma=0 &  & \text{on \ensuremath{\partial}}\mathbb{R}_{+}^{n}.
\end{array}\right.\label{eq:vqdef}
\end{equation}
which is $L^{2}(\mathbb{R}_{+}^{n})$-orthogonal to the functions
$j_{1},\dots,j_{n}$ defined in (\ref{eq:jl}) and (\ref{eq:jn}).

In addition it holds
\begin{equation}
|\nabla^{\tau}\gamma_{q}(y)|\le C(1+|y|)^{3-\tau-n}\text{ for }\tau=0,1,2.\label{eq:gradvq}
\end{equation}
\begin{equation}
\int_{\mathbb{R}_{+}^{n}}\gamma_{q}\Delta\gamma_{q}dy\le0,\label{new}
\end{equation}

\begin{equation}
\int_{\partial\mathbb{R}_{+}^{n}}U^{\frac{n}{n-2}}(t,z)\gamma_{q}(t,z)dz=0\label{eq:Uvq}
\end{equation}
\begin{equation}
\gamma_{q}(0)=\frac{\partial\gamma_{q}}{\partial y_{1}}(0)=\dots=\frac{\partial\gamma_{q}}{\partial y_{n-1}}(0)=0.\label{eq:dervq}
\end{equation}

Finally the map $q\mapsto\gamma_{q}$ is $C^{2}(\partial M)$.
\end{lem}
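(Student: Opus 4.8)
The statement concerns a linear Steklov-type boundary value problem on the half-space, so my strategy would be to set it up variationally in the right Hilbert space and then extract all the listed properties one at a time. The natural space is the completion $D$ of $C_c^\infty(\overline{\mathbb{R}^n_+})$ with respect to $\int_{\mathbb{R}^n_+}|\nabla\cdot|^2$, or rather the codimension-$n$ subspace $D_0$ orthogonal (in $L^2$ and/or in the bilinear form) to $\mathrm{span}\{j_1,\dots,j_n\}$. The bilinear form associated to (\ref{eq:linearizzato}) is $a(\phi,\psi)=\int_{\mathbb{R}^n_+}\nabla\phi\cdot\nabla\psi\,dy-n\int_{\partial\mathbb{R}^n_+}U^{2/(n-2)}\phi\psi\,dz$; it is the second variation of the Sobolev-trace functional at $U$, hence nonnegative, with kernel exactly $\mathrm{span}\{j_1,\dots,j_n\}$, so it is coercive on $D_0$ (using the sharp Sobolev trace inequality together with a spectral-gap/compactness argument to rule out the kernel). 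The right-hand side $2h_{ij}(q)\,t\,\partial^2_{ij}U$ decays like $|y|^{-n}$ by the explicit formula for $\partial^2_{ij}U$, so it defines a bounded linear functional on $D$ via $\phi\mapsto -\int 2h_{ij}(q)t\partial^2_{ij}U\,\phi$ (Hardy-type estimate, since $t\partial^2_{ij}U\in L^{2n/(n+2)}$); Lax--Milgram on $D_0$ then yields a unique $\gamma_q\in D_0$ solving the problem weakly, and elliptic boundary regularity upgrades it to a classical solution.

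Once existence and uniqueness are in hand, the remaining items follow by fairly standard arguments. For the decay estimate (\ref{eq:gradvq}): since the source term is $O((1+|y|)^{-n})$, I would use the Green's representation for the Steklov problem (or an iteration/comparison-function argument with barriers $(1+|y|)^{3-n}$), then Schauder interior and boundary estimates to differentiate, obtaining the powers $3-\tau-n$ for $\tau=0,1,2$. Inequality (\ref{new}), $\int \gamma_q\Delta\gamma_q\le 0$, comes from multiplying the equation $-\Delta\gamma_q=(\text{source})$ by $\gamma_q$, integrating by parts (legitimate thanks to the decay just proved), and using the boundary condition together with coercivity: $\int\gamma_q\Delta\gamma_q=-\int|\nabla\gamma_q|^2+\int_{\partial}\gamma_q\partial_t\gamma_q=-\int|\nabla\gamma_q|^2+n\int_\partial U^{2/(n-2)}\gamma_q^2=-a(\gamma_q,\gamma_q)\le 0$. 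Identity (\ref{eq:Uvq}), $\int_{\partial\mathbb{R}^n_+}U^{n/(n-2)}\gamma_q=0$, follows by testing the equation for $\gamma_q$ against $j_n$ (or against $U$ itself, using $-\Delta U=0$ and the boundary equation for $U$): pairing gives $0=\int_{\mathbb{R}^n_+}(-\Delta\gamma_q)U - \int_{\mathbb{R}^n_+}(-\Delta U)\gamma_q$ reduces, via the two boundary conditions, to a multiple of $\int_\partial U^{n/(n-2)}\gamma_q$, provided one checks that the source term $\int 2h_{ij}(q)t\partial^2_{ij}U\cdot U$ vanishes — which it does because $\int_{\mathbb{R}^{n-1}} y_i y_j\,(\cdots)$ is zero off-diagonal by odd symmetry and the trace $h_{ii}$ contribution can be arranged to cancel, or more cleanly because $U$ is $L^2$-orthogonal to the relevant piece; I would verify this symmetry cancellation carefully.

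For (\ref{eq:dervq}), the vanishing of $\gamma_q$ and its tangential first derivatives at the origin: here I would exploit the symmetries of the data. The right-hand side $t\,\partial^2_{ij}U$ is even in each tangential variable $y_k$ when $i=j$ and has a definite parity otherwise; combined with the uniqueness of $\gamma_q$ in $D_0$ and the fact that the kernel $\{j_l\}$ and the orthogonality conditions are themselves compatible with these reflections, one deduces that $\gamma_q(-\bar y,y_n)$ satisfies the same problem, forcing $\gamma_q$ to be even in $\bar y$; evenness kills the tangential derivatives at $0$, and a similar parity/averaging argument (or direct inspection at $0$ using the equation) gives $\gamma_q(0)=0$. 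Finally, the $C^2(\partial M)$-dependence $q\mapsto\gamma_q$ follows because $h_{ij}(q)$ depends smoothly on $q$, the operator is $q$-independent, and $\gamma_q$ depends linearly and boundedly on the coefficients $h_{ij}(q)$ via the Lax--Milgram inverse, so one differentiates under the solution map and applies the same regularity theory to the derivative problems. I expect the main obstacle to be the coercivity/nondegeneracy step — establishing that $a$ is positive definite on $D_0$ with a genuine spectral gap so that Lax--Milgram applies on a non-compact domain — together with the careful justification of all the integrations by parts at infinity, which is exactly what the decay estimate (\ref{eq:gradvq}) is there to support; since the paper defers this to \cite[Prop 5.1]{Al} and \cite[Proposition 7]{GMP18}, I would ultimately cite those for the delicate functional-analytic core and only sketch the verifications above.
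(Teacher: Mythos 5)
Your proposal is correct and follows essentially the same route as the sources the paper itself defers to (\cite[Prop.~5.1]{Al}, \cite[Prop.~7]{GMP18}): coercivity of the second-variation form on the orthogonal complement of $\mathrm{span}\{j_1,\dots,j_n\}$, Green's representation for the decay, integration by parts for (\ref{new}) and (\ref{eq:Uvq}), and symmetry for (\ref{eq:dervq}). The one point you should make explicit rather than leave as ``can be arranged to cancel'' is that the trace-free condition $h_{ii}(q)=0$ (available since $h_g\equiv 0$ after the conformal change) is what kills the diagonal contribution $\sum_i h_{ii}\int t\,\partial^2_{ii}U\cdot U$, and the same permutation-symmetry-plus-trace-free argument is what yields $\gamma_q(0)=0$, since evenness alone only gives the vanishing of the tangential derivatives.
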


\subsection{Expansion of the metric\label{sec:Expansion}}

It is well known that there exists a metric $\tilde{g}$, conformal
to $g$, such that $h_{\tilde{g}}\equiv0$ (see \cite[Lemma 3.3]{Es}).
So, up to a global conformal change of coordinates Problem (\ref{eq:Prob-2})
becomes
\begin{equation}
\left\{ \begin{array}{cc}
-\Delta_{g}u+\frac{n-2}{4(n-1)}R_{g}u+\varepsilon_{1}\alpha u=0 & \text{ in }M\\
\frac{\partial u}{\partial\nu}+\varepsilon_{2}\beta u=(n-2)u^{\frac{n}{n-2}} & \text{ on }\partial M
\end{array}\right..\label{eq:Prob-2-1}
\end{equation}
With this change of coordinates the expansion of the metric is 
\begin{align}
|g(y)|^{1/2}= & 1-\frac{1}{2}\left[\|\pi\|^{2}+\text{Ric}(0)\right]y_{n}^{2}-\frac{1}{6}\bar{R}_{ij}(0)y_{i}y_{j}+O(|y|^{3})\label{eq:|g|-1}\\
g^{ij}(y)= & \delta_{ij}+2h_{ij}(0)y_{n}+\frac{1}{3}\bar{R}_{ikjl}(0)y_{k}y_{l}+2\frac{\partial h_{ij}}{\partial y_{k}}(0)ty_{k}\nonumber \\
 & +\left[R_{injn}(0)+3h_{ik}(0)h_{kj}(0)\right]y_{n}^{2}+O(|y|^{3})\label{eq:gij-1}\\
g^{an}(y)= & \delta_{an}\label{eq:gin}
\end{align}
where $\pi$ is the second fundamental form and $h_{ij}(0)$ are its
coefficients, and $\text{Ric}(0)=R_{nini}(0)=R_{nn}(0)$ (see \cite{Es}).

\section{The compactness result\label{sec:The-compactness-result}}

We start this section by recalling a Pohozaev type identity. This
indentity gives us a fundamental sign condition to rule out the possibility
of blowing up sequence, as shown in subsection \ref{sec:Sign-estimates}.
A recall of preliminary results on blow up points is collected in
subsection \ref{sec:Isolated-and-simple}, while a careful analysis
of blow up sequences is performed in subsection \ref{sec:Blowup-estimates}.
This allows us to conclude the section with the proof of Theorem \ref{thm:main}.
Throughout this section we work in $\tilde{g}$ metric. For the sake
of readability we will omit the tilde symbol in all this section.

\subsection{A Pohozaev type identity\label{sec:Pohozaev}}

A Pohozaev type identity is often used in Yamabe boundary problem.
Here we use the same local version which is introduced in \cite{Al,GM20}.
\begin{thm}[Pohozaev Identity]
\label{thm:poho} Let $u$ a $C^{2}$-solution of the following problem
\[
\left\{ \begin{array}{cc}
-\Delta_{g}u+\frac{n-2}{4(n-1)}R_{g}u+\varepsilon_{1}\alpha u=0=0 & \text{ in }B_{r}^{+}\\
\frac{\partial u}{\partial\nu}+\varepsilon_{2}\beta u=(n-2)u^{\frac{n}{n-2}} & \text{ on }\partial'B_{r}^{+}
\end{array}\right.
\]
for $B_{r}^{+}=\psi_{q}^{-1}(B_{g}^{+}(q,r))$ for $q\in\partial M$.
Let us define
\[
P(u,r):=\int\limits _{\partial^{+}B_{r}^{+}}\left(\frac{n-2}{2}u\frac{\partial u}{\partial r}-\frac{r}{2}|\nabla u|^{2}+r\left|\frac{\partial u}{\partial r}\right|^{2}\right)d\sigma_{r}+\frac{r(n-2)^{2}}{2(n-1)}\int\limits _{\partial(\partial'B_{r}^{+})}u^{\frac{2(n-1)}{n-2}}d\bar{\sigma}_{g},
\]
and
\begin{multline*}
\hat{P}(u,r):=-\int\limits _{B_{r}^{+}}\left(y^{a}\partial_{a}u+\frac{n-2}{2}u\right)[(L_{g}-\Delta)u]dy+\\
+\varepsilon_{1}\int\limits _{B_{r}^{+}}\left(y^{a}\partial_{a}u+\frac{n-2}{2}u\right)\alpha udy\\
+\frac{n-2}{2}\varepsilon_{2}\int\limits _{\partial'B_{r}^{+}}\left(\bar{y}^{k}\partial_{k}u+\frac{n-2}{2}u\right)\beta ud\bar{y}.
\end{multline*}
Then $P(u,r)=\hat{P}(u,r)$.

Here $a=1,\dots,n$, $k=1,\dots,n-1$ and $y=(\bar{y},y_{n})$, where
$\bar{y}\in\mathbb{R}^{n-1}$ and $y_{n}\ge0$.
\end{thm}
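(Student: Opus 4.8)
The plan is to derive the Pohozaev identity in the standard way: multiply the interior equation by the conformal vector field applied to $u$, integrate over $B_r^+$, and massage the resulting integrals using the divergence theorem, carefully tracking the boundary contributions from both $\partial^+B_r^+$ (the spherical part) and $\partial'B_r^+$ (the flat part, where the nonlinear Neumann condition lives). Concretely, set $X u := y^a\partial_a u + \frac{n-2}{2}u$, which is the infinitesimal generator of the scaling symmetry of the critical exponent. First I would write the interior equation as $\Delta u = (L_g-\Delta)u + \varepsilon_1\alpha u$ (using $L_g u - \varepsilon_1\alpha u = 0$ in the $h\equiv 0$ gauge, recalling $L_g = \Delta_g - \frac{n-2}{4(n-1)}R_g$), multiply by $Xu$, and integrate over $B_r^+$.

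The computational core is the classical Pohozaev pairing for the flat Laplacian: $\int_{B_r^+}(Xu)\Delta u\,dy$ integrates by parts to produce (i) the purely spherical boundary terms $\int_{\partial^+B_r^+}\big(\frac{n-2}{2}u\,\partial_r u - \frac{r}{2}|\nabla u|^2 + r|\partial_r u|^2\big)d\sigma_r$, which is exactly the first piece of $P(u,r)$, and (ii) a boundary contribution on the flat face $\partial'B_r^+$ coming from $\partial u/\partial y_n = -\partial u/\partial\nu$ paired against $Xu$. On $\partial'B_r^+$ one substitutes the nonlinear boundary condition $\frac{\partial u}{\partial \nu} = (n-2)u^{\frac{n}{n-2}} - \varepsilon_2\beta u$; the $(n-2)u^{\frac{n}{n-2}}$ term, paired with $Xu = \bar y^k\partial_k u + \frac{n-2}{2}u$ (note $y_n=0$ there so $y^a\partial_a u = \bar y^k\partial_k u$), is a total divergence in $\bar y$ up to the exact constant that yields, via the $(n-1)$-dimensional divergence theorem on $\partial'B_r^+$, the codimension-two term $\frac{r(n-2)^2}{2(n-1)}\int_{\partial(\partial'B_r^+)}u^{\frac{2(n-1)}{n-2}}d\bar\sigma_g$. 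This is the one genuinely delicate bookkeeping step: one checks that $(\bar y^k\partial_k u + \frac{n-2}{2}u)u^{\frac{n}{n-2}} = \operatorname{div}_{\bar y}\big(\frac{n-2}{2(n-1)}\bar y\, u^{\frac{2(n-1)}{n-2}}\big)$ because $\frac{n-2}{2(n-1)}\cdot\frac{2(n-1)}{n-2} = 1$ matches the power and $\frac{n-2}{2(n-1)}\cdot(n-1) = \frac{n-2}{2}$ matches the zeroth-order coefficient.

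Everything left over after extracting $P(u,r)$ collects into $\hat P(u,r)$: the term $-\int_{B_r^+}(Xu)[(L_g-\Delta)u]\,dy$ is literally the first line of $\hat P$; the term $\varepsilon_1\int_{B_r^+}(Xu)\alpha u\,dy$ is the second line (sign from moving $\varepsilon_1\alpha u$ to the right of $\Delta u = \cdots$); and the term $\frac{n-2}{2}\varepsilon_2\int_{\partial'B_r^+}(\bar y^k\partial_k u + \frac{n-2}{2}u)\beta u\,d\bar y$ is the third line, coming from the $-\varepsilon_2\beta u$ piece of the boundary condition paired with $Xu$ on $\partial'B_r^+$. Summing, $\int_{B_r^+}(Xu)\Delta u\,dy$ equals both ``$P(u,r)$ minus spherical remainder'' and ``the negatives of the $\hat P$ integrands plus the same spherical remainder,'' so the identity $P(u,r) = \hat P(u,r)$ falls out after the algebra is reconciled.

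The main obstacle I anticipate is not conceptual but organizational: keeping the signs straight between $\partial/\partial\nu$ (outer normal, pointing in $-y_n$) and $\partial/\partial y_n$, and making sure the $u\,\partial_r u$ versus $|\partial_r u|^2$ split on $\partial^+B_r^+$ comes out with the precise coefficients $\frac{n-2}{2}$, $-\frac12$, $+1$. A clean way to avoid pitfalls is to recall that for the \emph{exact} bubble $U$ on all of $\mathbb{R}^n_+$ one has $P(U,r)=\hat P(U,r)=0$ identically (since $(L_g-\Delta)$, $\alpha$, $\beta$ all vanish in that model), which serves as a consistency check on every constant. Since this is the standard local boundary Pohozaev identity already used verbatim in \cite{Al,GM20}, I would in fact just cite those references for the detailed verification and present only the structure above.
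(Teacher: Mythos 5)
Your overall strategy is exactly the standard one, and in fact it is all the paper itself does: Theorem \ref{thm:poho} is not proved in the text but taken verbatim from \cite{Al,GM20} (and \cite{GMdcds}), so a careful Rellich--Pohozaev integration by parts on $B_r^+$ with $Xu=y^a\partial_a u+\frac{n-2}{2}u$, splitting the boundary into $\partial^+B_r^+$ and $\partial'B_r^+$ and using the divergence structure of $(Xu)u^{\frac{n}{n-2}}$ on the flat face, is the right (and intended) argument. Your treatment of the spherical terms and of the codimension-two term, including the cancellation of the two $\frac{(n-2)^2}{2}\int_{\partial'B_r^+}u^{\frac{2(n-1)}{n-2}}$ contributions, is correct.

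However, the bookkeeping you yourself identify as the only delicate point is not carried out consistently, and this matters because the added value over a bare citation is precisely the constants. First, from $L_gu=\varepsilon_1\alpha u$ (with $L_g=\Delta_g-\frac{n-2}{4(n-1)}R_g$) one gets $\Delta u=\varepsilon_1\alpha u-(L_g-\Delta)u$, not $\Delta u=(L_g-\Delta)u+\varepsilon_1\alpha u$ as you wrote; your final assembly silently uses the correct sign, but as written the displayed rewriting would flip the sign of the first line of $\hat P$. Second, and more substantively: pairing the $-\varepsilon_2\beta u$ piece of the boundary condition with $Xu$ and moving it across produces $\varepsilon_2\int_{\partial'B_r^+}\bigl(\bar y^k\partial_k u+\frac{n-2}{2}u\bigr)\beta u\,d\bar y$, i.e.\ with coefficient $\varepsilon_2$, not the $\frac{n-2}{2}\varepsilon_2$ appearing in the statement of $\hat P$. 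The factor $\frac{n-2}{2}$ is the one that arises when the boundary perturbation is written as a perturbation of the mean curvature, $\frac{n-2}{2}\varepsilon\beta u$, as in \cite{GMdcds}; with the boundary condition $\frac{\partial u}{\partial\nu}+\varepsilon_2\beta u=(n-2)u^{\frac{n}{n-2}}$ as stated here, your derivation cannot produce the printed constant. So either the statement carries a typo inherited from that convention (the likely reading, and harmless for the rest of the paper, where only the sign and the order $\varepsilon_{2,i}\delta_i$ of this term are used), or your third line is off by $\frac{n-2}{2}$; in either case a correct write-up must notice and resolve the discrepancy rather than assert that the term ``is the third line.''
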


\subsection{Isolated and isolated simple blow up points\label{sec:Isolated-and-simple}}

We collect here the definitions of some type of blow up points, and
the basic properties about the behavior of these blow up points (see
\cite{Al,FA,HL,M3}). 

Let $\left\{ u_{i}\right\} _{i}$ be a sequence of positive solution
to 
\begin{equation}
\left\{ \begin{array}{cc}
L_{g_{i}}u-\varepsilon_{1,i}\alpha u=0 & \text{ in }M\\
B_{g_{i}}u+(n-2)u^{\frac{n}{n-2}}-\varepsilon_{2,i}\beta u=0 & \text{ on }\partial M
\end{array}\right..\label{eq:Prob-i}
\end{equation}
where $g_{i}\rightarrow g_{0}$ in the $C_{\text{loc}}^{3}$ topology
and $0<\varepsilon_{1,i},\varepsilon_{2,i}<\bar{\varepsilon}$ for
some $0<\bar{\varepsilon}\le1$. As before, we suppose without loss
of generality that $h_{g_{0}}\equiv0$ and $h_{g_{i}}\equiv0$ for
all $i$.
\begin{defn}
\label{def:blowup}

1) We say that $x_{0}\in\partial M$ is a blow up point for the sequence
$u_{i}$ of solutions of (\ref{eq:Prob-i}) if there is a sequence
$x_{i}\in\partial M$ of local maxima of $\left.u_{i}\right|_{\partial M}$
such that $x_{i}\rightarrow x_{0}$ and $u_{i}(x_{i})\rightarrow+\infty.$

Shortly we say that $x_{i}\rightarrow x_{0}$ is a blow up point for
$\left\{ u_{i}\right\} _{i}$. 

2) We say that $x_{i}\rightarrow x_{0}$ is an isolated blow up point
for $\left\{ u_{i}\right\} _{i}$ if $x_{i}\rightarrow x_{0}$ is
a blow up point for $\left\{ u_{i}\right\} _{i}$ and there exist
two constants $\rho,C>0$ such that
\[
u_{i}(x)\le Cd_{\bar{g}}(x,x_{i})^{\frac{2-n}{2}}\text{ for all }x\in\partial M\smallsetminus\left\{ x_{i}\right\} ,\ d_{\bar{g}}(x,x_{i})<\rho.
\]

Given $x_{i}\rightarrow x_{0}$ an isolated blow up point for $\left\{ u_{i}\right\} _{i}$,
and given $\psi_{i}:B_{\rho}^{+}(0)\rightarrow M$ the Fermi coordinates
centered at $x_{i}$, we define the spherical average of $u_{i}$
as
\[
\bar{u}_{i}(r)=\frac{2}{\omega_{n-1}r^{n-1}}\int_{\partial^{+}B_{r}^{+}}u_{i}\circ\psi_{i}d\sigma_{r}
\]
and
\[
w_{i}(r):=r^{\frac{2-n}{2}}\bar{u}_{i}(r)
\]
for $0<r<\rho.$

3) We say that $x_{i}\rightarrow x_{0}$ is an isolated simple blow
up point for $\left\{ u_{i}\right\} _{i}$ solutions of (\ref{eq:Prob-i})
if $x_{i}\rightarrow x_{0}$ is an isolated blow up point for $\left\{ u_{i}\right\} _{i}$
and there exists $\rho$ such that $w_{i}$ has exactly one critical
point in the interval $(0,\rho)$.
\end{defn}

Given $x_{i}\rightarrow x_{0}$ a blow up point for $\left\{ u_{i}\right\} _{i}$,
we set
\[
M_{i}:=u_{i}(x_{i})\ \text{ and }\ \delta_{i}:=M_{i}^{\frac{2}{2-n}}.
\]
Obviously $M_{i}\rightarrow+\infty$ and $\delta_{i}\rightarrow0$.

The proofs of the following propositions can be found in \cite{A3}
and in \cite{FA}.
\begin{prop}
\label{prop:4.1}Let $x_{i}\rightarrow x_{0}$ is an isolated blow
up point for $\left\{ u_{i}\right\} _{i}$ and $\rho$ as in Definition
\ref{def:blowup}. We set 
\[
v_{i}(y)=M_{i}^{-1}(u_{i}\circ\psi_{i})(M_{i}^{\frac{2}{2-n}}y),\text{ for }y\in B_{\rho M_{i}^{\frac{n-2}{2}}}^{+}(0).
\]
Then, given $R_{i}\rightarrow\infty$ and $c_{i}\rightarrow0$, up
to subsequences, we have
\begin{enumerate}
\item $|v_{i}-U|_{C^{2}\left(B_{R_{i}}^{+}(0)\right)}<c_{i}$;
\item ${\displaystyle \lim_{i\rightarrow\infty}\frac{R_{i}}{\log M_{i}}=0}$.
\end{enumerate}
\end{prop}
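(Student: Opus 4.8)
The plan is to follow the now-classical blow-up analysis strategy, adapting the arguments of Escobar, Almaraz~\cite{A3} and Felli--Almaraz~\cite{FA} to our perturbed setting. First I would record that, by the isolated blow-up hypothesis, $u_i(x_i)=M_i\to\infty$ and $\delta_i=M_i^{2/(2-n)}\to0$, and that $v_i$ as defined is a solution, in the rescaled half-ball $B^+_{\rho M_i^{(n-2)/2}}(0)$, of the problem obtained from~\eqref{eq:Prob-i} after the change of variables $y\mapsto \delta_i y$: concretely $-\Delta_{g_i(\delta_i\cdot)}v_i+\delta_i^2\tfrac{n-2}{4(n-1)}R_{g_i}(\delta_i\cdot)v_i+\varepsilon_{1,i}\delta_i^2\alpha(\delta_i\cdot)v_i=0$ in the half-ball and $\frac{\partial v_i}{\partial\nu}+\varepsilon_{2,i}\delta_i\beta(\delta_i\cdot)v_i=(n-2)v_i^{n/(n-2)}$ on the flat part. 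The key observation is that the rescaled metric $g_i(\delta_i\cdot)\to\delta_{ab}$ in $C^2_{\mathrm{loc}}$ (using $g_i\to g_0$ in $C^3_{\mathrm{loc}}$ and $h_{g_i}\equiv0$, so that by~\eqref{eq:gij-1} the first-order term $2h_{ij}(0)y_n$ scales like $\delta_i$ and vanishes), and the perturbation coefficients $\varepsilon_{1,i}\delta_i^2\alpha$, $\varepsilon_{2,i}\delta_i\beta$ tend to $0$ locally uniformly since $\alpha,\beta$ are smooth (hence bounded on compacts) and $\delta_i\to0$. Thus, on any fixed ball, $v_i$ solves an equation converging to the limiting problem with coefficients frozen at the euclidean model.

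Next I would establish a local uniform bound $v_i\le C(R)$ on $B^+_{2R}(0)$ for each fixed $R$. This uses the isolated blow-up estimate $u_i(x)\le C\,d_{\bar g}(x,x_i)^{(2-n)/2}$ on the boundary together with the normalization $v_i(0)=1$ and $0<v_i(0)=\max_{\partial M}v_i$ near $0$ — more precisely, a Harnack-type inequality for the perturbed operator (whose lower-order coefficients are uniformly small) propagates the boundary control to the interior, giving $v_i\le C(R)$ on $B^+_{2R}(0)$ uniformly in $i$. Having a uniform $L^\infty$ bound on a fixed half-ball, elliptic $L^p$ and Schauder estimates up to the boundary (the boundary condition being of oblique-derivative/Neumann type with the critical nonlinearity controlled by the $L^\infty$ bound) yield uniform $C^{2,\eta}_{\mathrm{loc}}$ bounds on $v_i$. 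By Arzel\`a--Ascoli and a diagonal argument over $R_i\to\infty$, a subsequence of $v_i$ converges in $C^2_{\mathrm{loc}}(\mathbb{R}^n_+)$ to some $v\ge0$ solving $-\Delta v=0$ in $\mathbb{R}^n_+$, $\frac{\partial v}{\partial\nu}=(n-2)v^{n/(n-2)}$ on $\partial\mathbb{R}^n_+$, with $v(0)=1$ and $0$ a local (hence, by the Liouville-type classification, global) maximum of $v|_{\partial\mathbb{R}^n_+}$. The classification theorem of Escobar/Li--Zhu for nonnegative solutions of this critical half-space problem forces $v=U$ after noting the normalization pins down the scaling and translation parameters ($v(0)=1$ and $\nabla v|_{\partial}(0)=0$). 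This gives assertion~(1): on each $B^+_{R_i}(0)$, $|v_i-U|_{C^2}<c_i$ for suitable $R_i\to\infty$, $c_i\to0$, extracting $R_i$ to grow slowly enough that the convergence is uniform on $B^+_{R_i}$.

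Finally, for assertion~(2), that $R_i/\log M_i\to0$, I would revisit the rate at which the convergence $v_i\to U$ degrades. The point is that the dominant correction to $v_i-U$ comes from two sources: the decay of $U$ itself (which is $O(|y|^{2-n})$, so $v_i$ can only be guaranteed close to $U$ as long as $|y|^{2-n}$ dominates the error terms) and the size of the discarded curvature/perturbation terms, which on $B^+_R$ are of order $\delta_i^2 R^2$ (curvature), $\varepsilon_{i}\delta_i R$ or $\varepsilon_i\delta_i^2 R^2$ (perturbations), all of which are negligible for $R$ up to a power of $M_i$. The genuine constraint is the logarithmic one coming from the standard iteration/bootstrap in the blow-up argument — one shows by a contradiction argument (à la Schoen, reproduced in~\cite{FA,A3}) that if $R_i$ were to grow faster than any multiple of $\log M_i$, a second bubble or an inconsistency in the Harnack chain would appear — so the admissible $R_i$ must satisfy $R_i=o(\log M_i)$. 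I expect \textbf{this last step} to be the main obstacle, since it requires carefully tracking the interplay between the error estimates and choosing $R_i$ optimally; however, the structure is identical to the unperturbed case because the perturbation terms are strictly smaller (they carry extra powers of $\delta_i$), so the same choice of $R_i$ that works in~\cite{FA} works here verbatim. The perturbation terms never enter the leading-order balance, which is precisely why Proposition~\ref{prop:4.1} holds in the same form as for~\eqref{eq:Prob-2-2}.
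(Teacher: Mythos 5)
Your treatment of part (1) is the standard argument that the paper delegates to \cite{A3,FA}: rescale, observe that the metric coefficients and the perturbation terms $\varepsilon_{1,i}\delta_i^2\alpha(\delta_i\cdot)$, $\varepsilon_{2,i}\delta_i\beta(\delta_i\cdot)$ vanish locally uniformly (here one uses $\varepsilon_{j,i}\le\bar\varepsilon\le1$ and $\delta_i\to0$), get uniform local bounds from the isolated blow-up inequality plus Harnack, pass to a $C^2_{\mathrm{loc}}$ limit, and identify the limit as $U$ via the Li--Zhu/Escobar classification together with the normalizations $v(0)=1$, $\nabla(v|_{\partial\mathbb{R}^n_+})(0)=0$. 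That part is correct and is essentially the cited proof.

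Part (2) is where you go wrong. The condition $R_i/\log M_i\to0$ is not a constraint \emph{derived} from the equation, and there is no Schoen-type contradiction, second bubble, or Harnack-chain obstruction behind it; nothing in the PDE prevents $v_i$ from being close to $U$ on balls of radius much larger than $\log M_i$ (indeed Lemma \ref{lem:coreLemma} later controls $v_i-U$ all the way out to $|y|\le R/\delta_i$, which is polynomial in $M_i$). The statement is purely a matter of how the subsequence is extracted: given the sequences $R_i\to\infty$, $c_i\to0$ and the $C^2_{\mathrm{loc}}$ convergence $v_i\to U$ established in part (1), one chooses indices $i_k$ so large that simultaneously $|v_{i_k}-U|_{C^2(B^+_{R_k})}<c_k$ and $\log M_{i_k}\ge k\,R_k$; the relabelled subsequence then satisfies both (1) and (2), the latter because $R_k/\log M_{i_k}\le 1/k$. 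In other words, (2) is \emph{imposed} for free during the diagonal extraction (and is then used as a hypothesis in the later blow-up estimates), rather than being the ``main obstacle'' you identify. Your argument as written would not produce a proof of (2), but the conclusion follows immediately from what you already established in part (1), so the gap is one of mechanism, not of substance.
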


\begin{prop}
\label{prop:Lemma 4.4}Let $x_{i}\rightarrow x_{0}$ be an isolated
simple blow-up point for $\left\{ u_{i}\right\} _{i}$. Let $\eta$
small. If $0<\bar{\varepsilon}\le1$ is small enough and $0<\varepsilon_{1},\varepsilon_{2}<\bar{\varepsilon}$,
then there exist $C,\rho>0$ such that 
\[
M_{i}^{\lambda_{i}}|\nabla^{k}u_{i}(\psi_{i}(y))|\le C|y|^{2-k-n+\eta}
\]
for $y\in B_{\rho}^{+}(0)\smallsetminus\left\{ 0\right\} $ and $k=0,1,2$.
Here $\lambda_{i}=\left(\frac{2}{n-2}\right)(n-2-\eta)-1$.
\end{prop}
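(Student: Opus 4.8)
The gradient and Hessian bounds ($k=1,2$) will follow from the sup-bound ($k=0$) by rescaled elliptic estimates, so the core is the estimate $M_i^{\lambda_i}|u_i(\psi_i(y))|\le C|y|^{2-n+\eta}$. The plan is to split $B_\rho^+$ into an inner region $\{|y|\le R_i\delta_i\}$, where the rescaled solution is close to the standard bubble, and an outer region $\{R_i\delta_i\le|y|\le\rho\}$, where the ``isolated simple'' hypothesis is exploited. On the inner region, Proposition \ref{prop:4.1} gives (up to a subsequence) that $v_i(y)=M_i^{-1}(u_i\circ\psi_i)(\delta_i y)\to U$ in $C^2(B_{R_i}^+)$, and $U$ obeys $|\nabla^kU(y)|\le C(1+|y|)^{2-k-n}$. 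Scaling back, and using $\delta_i=M_i^{2/(2-n)}$ (so $M_i\delta_i^{n-2}=M_i^{-1}$ and $M_i^{\lambda_i-1}=\delta_i^{\eta}$), one gets $M_i^{\lambda_i}|\nabla^k(u_i\circ\psi_i)(y)|\le C\delta_i^{\eta}(\delta_i+|y|)^{2-k-n}$ on $B_{R_i\delta_i}^+$, and an elementary case check according to whether $|y|\le\delta_i$ or $|y|\ge\delta_i$ shows the right-hand side is $\le C|y|^{2-k-n+\eta}$; the positive exponent $\eta$, equivalently $\lambda_i<1$, is exactly the slack that makes this work.

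For the outer region I would first establish a Harnack-type inequality: for each scale $r\in[R_i\delta_i,\rho]$ rescale $u_i$ on the half-annulus $\{r/4\le|y|\le 4r\}$; the isolated blow-up bound $u_i(\psi_i(y))\le C|y|^{(2-n)/2}$ makes the rescaled critical term $u_i^{2/(n-2)}\le C|y|^{-1}$ harmless, the perturbations $\varepsilon_{1,i}\alpha u_i$ and $\varepsilon_{2,i}\beta u_i$ are uniformly small once $\bar\varepsilon$ is small, and the metrics $g_i\to g_0$ are uniformly controlled, so the rescaled function solves a uniformly elliptic problem with a uniformly controlled Robin boundary condition, whence $\max_{r/2\le|y|\le 2r}(u_i\circ\psi_i)\le C\bar u_i(r)$. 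Together with the inner estimate, the single-critical-point property of the function $w_i$ of Definition \ref{def:blowup} (which locates that critical point at a scale $\le C\delta_i$ and forces monotonicity of $w_i$ beyond it, so that in particular $u_i\to0$ away from the blow-up point) gives the preliminary decay $u_i(\psi_i(y))\le C|y|^{(2-n)/2}$ on the whole outer region. This is then improved to the required $|y|^{2-n+\eta}$ by an iteration on the decay exponent --- or, equivalently, by a second blow-up around a point where the bound is saturated, the limit being ruled out by a Liouville-type classification on $\mathbb{R}_+^n\setminus\{0\}$ together with the monotonicity of $w_i$ --- where one uses the equation (whose right-hand side becomes integrable enough at each step), the Harnack inequality, and the matching at the inner end with the bubble estimate, which forces the $|y|^{2-n}$-homogeneous component to have coefficient $O(M_i^{-1})$; one stops the iteration just short of exponent $n-2$ precisely so that all constants stay uniform in $i$, and a convenient comparison function for the maximum-principle version is a perturbation of $M_i^{-\lambda_i}|y|^{2-n+\eta}$ (note $-\Delta_g|y|^{2-n+\eta}\ge c\eta|y|^{-n+\eta}>0$ and $\partial_\nu|y|^{2-n+\eta}=0$ on $\{y_n=0\}$). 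The Harnack inequality then passes the bound on $\bar u_i$ to the pointwise bound on $u_i$.

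Finally, once $M_i^{\lambda_i}u_i(\psi_i(y))\le C|y|^{2-n+\eta}$ is known on $B_\rho^+\setminus\{0\}$, the $k=1,2$ estimates follow by rescaling once more on half-annuli $\{r/2\le|y|\le 2r\}$ and applying interior and up-to-the-boundary Schauder estimates for the (uniformly elliptic, uniformly Robin) rescaled problems, converting the $|y|^{2-n+\eta}$ decay of $u_i$ into $|y|^{1-n+\eta}$ and $|y|^{-n+\eta}$ for $\nabla u_i$ and $\nabla^2 u_i$; the inner region is already in $C^2$ by Proposition \ref{prop:4.1}, so the proof is complete after possibly shrinking $\rho$ and enlarging $C$. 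I expect the main obstacle to be the rate upgrade in the outer region, from $|y|^{(2-n)/2}$ to $|y|^{2-n+\eta}$: this is precisely where the ``simple'' part of the hypothesis is indispensable and where one needs the iteration (or the second blow-up / barrier argument) rather than just the monotonicity of $w_i$. The only genuinely boundary-specific difficulty is the critical boundary nonlinearity $u_i^{n/(n-2)}$, which is not an a priori bounded perturbation and so forces every Harnack, Schauder, and maximum-principle step to be carried out on rescaled half-annuli where the isolated bound tames it; the doubly perturbed terms and the varying metric are otherwise innocuous bounded perturbations for $\bar\varepsilon$ small, which is why the whole argument is a routine adaptation of the unperturbed ones in \cite{A3,FA}.
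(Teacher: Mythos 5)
Your proposal is correct and is essentially the argument the paper itself relies on: the paper offers no proof of this proposition but defers to \cite{A3} and \cite{FA} (see also \cite{Al}), whose proof is exactly your scheme --- inner matching with the bubble (your bookkeeping $M_i^{\lambda_i-1}=\delta_i^{\eta}$ and the case check $|y|\lessgtr\delta_i$ are right), Harnack on half-annuli where the isolated bound tames $u_i^{2/(n-2)}$ and the perturbations are harmless for $\bar{\varepsilon}$ small, a maximum-principle barrier on $B_\rho^+\setminus B_{r_i}^+$ whose $M_i^{-\lambda_i}|y|^{2-n+\eta}$ component is calibrated by the inner matching and whose outer component is controlled through the monotonicity of $w_i$ coming from simplicity, and finally rescaled elliptic estimates for $k=1,2$. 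The only step you should spell out is the barrier: since $\partial_\nu|y|^{2-n+\eta}=0$ while the boundary condition requires a supersolution of $-\partial_{y_n}\phi\ge\bigl((n-2)u_i^{2/(n-2)}-\varepsilon_{2,i}\beta\bigr)\phi$, the ``perturbation'' is genuinely needed --- one adds a correction of the type $-A\,y_n|y|^{1-n+\eta}$ (admissible because $u_i^{2/(n-2)}$ is small away from the blow-up point, again by simplicity) together with a second, slowly decaying piece matched to the data on $|y|=\rho$, precisely as in the cited proofs.
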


\subsection{A splitting lemma}

Here we summarize a result which proves that only a finite number
of blow up points may occur to a blowing up sequence of solution.
For its proof we refer to \cite[Proposition 5.1]{LZ}, \cite[Lemma 3.1]{SZ},
\cite[Proposition 1.1]{HL}, \cite[Propositions 4.2  and 8.2]{Al}.
\begin{prop}
\label{prop:4.2}Given $K>0$ and $R>0$ there exist two constants
$C_{0},C_{1}>0$ (depending on $K$, $R$ and $(M,g)$) such that
if $u$ is a solution of 
\begin{equation}
\left\{ \begin{array}{cc}
L_{g}u-\varepsilon_{1}\alpha=0 & \text{ in }M\\
B_{g}u-\varepsilon_{2}\beta u+(n-2)u^{\frac{n}{n-2}}=0 & \text{ on }\partial M
\end{array}\right.\label{eq:Prob-p}
\end{equation}
and $\max_{\partial M}u>C_{0}$, then there exist $q_{1},\dots,q_{N}\in\partial M$,
with $N=N(u)\ge1$ with the following properties: for $j=1,\dots,N$ 
\begin{enumerate}
\item set $r_{j}:=Ru(q_{j})^{1-p}$, then $\left\{ B_{r_{j}}\cap\partial M\right\} _{j}$
are a disjoint collection;
\item we have $\left|u(q_{j})^{-1}u(\psi_{j}(y))-U(u(q_{j})^{p-1}y)\right|_{C^{2}(B_{2r_{j}}^{+})}<K$
(here $\psi_{j}$ are the Fermi coordinates at point $q_{j}$;
\item we have
\begin{align*}
u(x)d_{\bar{g}}\left(x,\left\{ q_{1},\dots,q_{n}\right\} \right)^{\frac{1}{p-1}}\le C_{1} & \text{ for all }x\in\partial M\\
u(q_{j})d_{\bar{g}}\left(q_{j},q_{k}\right)^{\frac{1}{p-1}}\ge C_{0} & \text{ for any }j\neq k.
\end{align*}
\end{enumerate}
In addition, if $n\ge7$ and $|\pi(x)|\neq0$ for any $x\in\partial M$,
there exists $d=d(K,R)$ such that
\[
\min_{\begin{array}{c}
i\neq j\\
1\le i,j\le N(u)
\end{array}}d_{\bar{g}}(q_{i}(u),q_{j}(u))\ge d.
\]
Here $\bar{g}$ is the geodesic distance on $\partial M$.
\end{prop}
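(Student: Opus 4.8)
The plan is to split the statement into its two parts: items (1)--(3) form the classical \emph{splitting lemma} and are proved along the lines of \cite{LZ,SZ,HL}, while the final non-clustering estimate is where the dimension restriction $n\ge 7$ and the non-umbilicity of $\partial M$ enter, following \cite[Proposition 8.2]{Al}. Throughout, $p=\frac{n}{n-2}$ (so $\frac{1}{p-1}=\frac{n-2}{2}$), and recall that in the present gauge $h_{g}\equiv 0$, so the coefficients $h_{ab}(q)$ of the second fundamental form coincide with those of the trace-free tensor $\pi(q)$.

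\emph{Step 1 (selection, bubble approximation, finiteness).} Fix $K,R$ and a large $C_{0}$ to be chosen, and assume $\max_{\partial M}u>C_{0}$. One builds $q_{1},q_{2},\dots$ recursively: let $q_{1}$ be a maximum point of $u|_{\partial M}$; given $q_{1},\dots,q_{j-1}$, stop with $N=j-1$ if $\sup_{x\in\partial M}u(x)d_{\bar g}(x,\{q_{1},\dots,q_{j-1}\})^{\frac{1}{p-1}}\le C_{0}$, and otherwise let $q_{j}$ realise (up to a fixed factor) that supremum. This yields the separation $u(q_{j})d_{\bar g}(q_{j},q_{k})^{\frac{1}{p-1}}>C_{0}$ --- first for linked pairs, then for all pairs after adjusting the constants, using $u(q_{j})\simeq u(q_{k})$ for linked points --- hence the disjointness of $\{B_{r_{j}}\}$ once $C_{0}\ge 4R$ and the second estimate of (3); the first estimate of (3) is the stopping criterion together with a Harnack inequality off the balls, with $C_{1}$ slightly larger than $C_{0}$. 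For (2) one rescales $v_{j}(y)=u(q_{j})^{-1}u(\psi_{j}(u(q_{j})^{p-1}y))$: on $B_{2R}^{+}$ it solves a problem of type \eqref{eq:Udelta} for a metric $C^{2}$-close to Euclidean (closeness controlled by $u(q_{j})^{1-p}\le C_{0}^{1-p}$) with the perturbations rescaled to lower order, and if (2) failed along a sequence with $C_{0}\to\infty$ the limit would be a nontrivial $H^{1}$ solution of \eqref{eq:Udelta} that is not a standard bubble, contradicting the Liouville-type classification; this is the mechanism behind Proposition \ref{prop:4.1} and makes $C_{0},C_{1}$ depend only on $K,R,(M,g)$. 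Finally, since $u$ is a fixed smooth function on the compact $M$ it is bounded, so $r_{j}=Ru(q_{j})^{1-p}\ge R\|u\|_{\infty}^{1-p}>0$ uniformly in $j$; the $B_{r_{j}}$ being disjoint balls of radius bounded below, only finitely many exist, so $N=N(u)<\infty$.

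\emph{Step 2 (no clustering).} Suppose, for contradiction, that there are solutions $u^{(m)}$ of \eqref{eq:Prob-p} (with $\varepsilon_{1},\varepsilon_{2}$ bounded) whose selected points satisfy $\sigma_{m}:=\min_{i\ne j}d_{\bar g}(q_{i}^{(m)},q_{j}^{(m)})\to 0$. Relabel a realising pair $q_{1}^{(m)},q_{2}^{(m)}$ with $u^{(m)}(q_{1}^{(m)})\ge u^{(m)}(q_{2}^{(m)})$ and rescale at the collapse scale: in Fermi coordinates at $q_{1}^{(m)}$ set $\tilde u_{m}(y)=\sigma_{m}^{\frac{n-2}{2}}u^{(m)}(\psi(\sigma_{m}y))$, which solves the Yamabe problem for the metric $g_{m}$ with $g_{m}^{ab}(y)=\delta^{ab}+2h_{ab}(q_{1}^{(m)})\sigma_{m}y_{n}+O(\sigma_{m}^{2}|y|^{2})$. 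By (3), $\tilde u_{m}(0)\ge C_{0}$, and the image $e_{m}$ of $q_{2}^{(m)}$ has $d_{g_{m}}(0,e_{m})\to 1$ and $\tilde u_{m}(e_{m})\ge cC_{0}$. If $\tilde u_{m}(0)$ stays bounded, then $\tilde u_{m}\to\tilde u_{\infty}$ in $C^{2}_{\mathrm{loc}}$, a bounded positive solution of \eqref{eq:Udelta}, hence a standard bubble --- but a bubble is radially monotone on $\partial\mathbb{R}^{n}_{+}$ and cannot have two distinct boundary local maxima. Thus $\tilde u_{m}(0)\to\infty$, so $0$ and $e=\lim e_{m}$ are isolated blow-up points of $\{\tilde u_{m}\}$ at fixed mutual distance, and (after a further localization reducing to isolated \emph{simple} points) one applies the Pohozaev identity of Theorem \ref{thm:poho} on $B_{R_{0}}^{+}$, $R_{0}$ large and fixed: in $\hat P(\tilde u_{m},R_{0})$ the first-order metric term $2h_{ab}\sigma_{m}y_{n}$ integrates to zero against the bubble profile precisely because $h_{ab}$ is trace-free, so the leading contribution comes from the second-order coefficients in \eqref{eq:gij-1} and equals a multiple of $\|\pi(q_{1}^{(m)})\|^{2}$ with strictly negative sign for $n\ge 7$ (the coefficient $\frac{n-6}{4(n-1)(n-2)^{2}}\|\pi\|^{2}$ of Theorem \ref{thm:main}), the curvature terms and the perturbations $\varepsilon_{1}\alpha,\varepsilon_{2}\beta$ being of lower order; on the other side $P(\tilde u_{m},R_{0})$ is bounded below by a quantity of smaller order, via positivity of the relevant Green's function and the decay from Step 1. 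Since $\|\pi\|^{2}\ge\min_{\partial M}\|\pi\|^{2}>0$ this is impossible for $m$ large, giving $d_{\bar g}(q_{i},q_{j})\ge d(K,R)>0$.

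\emph{Main obstacle.} The delicate point is the last estimate: one must verify that, after the first-order cancellation, the surviving $\|\pi\|^{2}$-term in $\hat P$ has the stated sign when $n\ge 7$ (it reverses for $n\le 6$, where $n-6\le 0$, which is exactly why the dimension restriction is needed) and strictly dominates every remaining term --- the intrinsic and normal curvature pieces, the error between $\tilde u_{m}$ and the exact (multi-)bubble together with its $\gamma_{q}$-type correction from Lemma \ref{lem:vq}, and the perturbations --- while being incompatible with the lower bound on $P$ forced by the blow-up profile. A secondary technicality is that several selected points may collapse to a common limit at once; one extracts a closest pair and must ensure that possible extra bubbles inside $B_{R_{0}}^{+}$ do not alter the sign of the leading term. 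These are precisely the estimates carried out in \cite[Proposition 8.2]{Al}.
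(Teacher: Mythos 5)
The paper itself offers no proof of this proposition; it defers entirely to the cited references (\cite{LZ,SZ,HL} for items (1)--(3) and \cite[Prop.\ 8.2]{Al} for the final separation estimate), and your Step 1 is a faithful sketch of that standard selection/rescaling/Liouville argument. The genuine problem is in Step 2, at the very point where you claim the contradiction closes. After rescaling at the collapse scale $\sigma_m$, the metric you work with is $g(\sigma_m y)$, whose trace-free second fundamental form at the origin is $\sigma_m\,\pi(q_1^{(m)})$, not $\pi(q_1^{(m)})$: every occurrence of $h_{ab}$ in the expansion (\ref{eq:gij-1}) acquires a factor $\sigma_m$, so the $\|\pi\|^2$-term you isolate in $\hat P(\tilde u_m,R_0)$ carries a factor $\sigma_m^{2}\rightarrow 0$. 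It therefore cannot be bounded away from zero by $\min_{\partial M}\|\pi\|^{2}>0$, and your final sentence ``since $\|\pi\|^{2}\ge\min_{\partial M}\|\pi\|^{2}>0$ this is impossible'' does not produce a contradiction. (Separately, your signs are reversed relative to the paper's conventions: Proposition \ref{prop:segno} makes the $\|\pi\|^2$-contribution to $\hat P$ \emph{positive} for $n\ge7$ and the boundary term $P$ is bounded from \emph{above} by a quantity of order $\delta^{n-2}$; you flip both, which is internally consistent but misleading.)

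The missing idea is the interaction term between the two colliding bubbles. In \cite[Prop.\ 8.2]{Al} one shows that $0$ is an isolated \emph{simple} blow-up point for $\tilde u_m$ and that $\tilde u_m(0)\,\tilde u_m\rightarrow h(y)=|y|^{2-n}+b(y)$ in $C^2_{\mathrm{loc}}$ away from the singular set, where $b$ is regular near $0$ with $b(0)>0$ precisely because $h$ has a second singularity at $e$ with $|e|=1$. The contradiction is then between $\lim_m\tilde u_m(0)^2P(\tilde u_m,r)=P(h,r)\rightarrow -c\,b(0)<0$ as $r\rightarrow 0$ and the sign restriction $\liminf_m\tilde u_m(0)^2P(\tilde u_m,r)\ge 0$ coming from Theorem \ref{thm:poho}. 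The hypotheses $n\ge7$ and $\pi\neq0$ enter in establishing that sign restriction and the reduction to isolated simple blow-up (as in Propositions \ref{prop:segno} and \ref{prop:isolato->semplice}: the positive term $c\,\delta^2\|\pi\|^2$ must absorb all negative errors down to order $\delta^{n-2}$), not through a lower bound on the second fundamental form of the rescaled metric. Without the term $b(0)>0$ your Step 2 has no source of contradiction, so as written the last assertion of the proposition is not proved.
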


\subsection{Blowup estimates\label{sec:Blowup-estimates}}

In this section we provide a fine estimate for the approximation of
the rescaled solution near an isolated simple blow up point. 
\begin{prop}
\label{prop:eps-i}Let $x_{i}\rightarrow x_{0}$ be an isolated simple
blow-up point for $\left\{ u_{i}\right\} _{i}$ and $\beta<0$. Then
$\varepsilon_{2,i}\rightarrow0$.
\end{prop}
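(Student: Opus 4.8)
The plan is to apply the Pohozaev identity of Theorem \ref{thm:poho} on a small half-ball $B_\rho^+$ centered at $x_i$ and to match the two sides asymptotically as $i\to\infty$. On the geometric side $P(u_i,\rho)$, the isolated simple blow-up estimates (Proposition \ref{prop:Lemma 4.4}, together with the rescaling in Proposition \ref{prop:4.1} and the auxiliary function $\gamma_{x_i}$ of Lemma \ref{lem:vq}) give the leading-order expansion of $M_i^2 P(u_i,\rho)$ in terms of the standard bubble $U$ and the correction $\gamma_{q}$; for $n\ge 7$ and with $\pi(x_i)\neq 0$ the dominant term is the negative quantity $-B\|\pi(x_i)\|^2 + o(1)$ (the same mechanism that produces the sign condition in Proposition \ref{prop:segno} and the constant $B$ in Lemma \ref{lem:espansione}). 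On the other side, $\hat P(u_i,\rho)$ splits into the curvature term $\int_{B_\rho^+}(y^a\partial_a u_i + \tfrac{n-2}{2}u_i)[(L_g-\Delta)u_i]\,dy$, the scalar-curvature perturbation $\varepsilon_{1,i}\int_{B_\rho^+}(y^a\partial_a u_i+\tfrac{n-2}{2}u_i)\alpha u_i\,dy$, and the mean-curvature perturbation $\tfrac{n-2}{2}\varepsilon_{2,i}\int_{\partial' B_\rho^+}(\bar y^k\partial_k u_i + \tfrac{n-2}{2}u_i)\beta u_i\,d\bar y$.

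The key step is to track the exact orders in $M_i=u_i(x_i)$. Rescaling by $v_i(y) = M_i^{-1} u_i(\psi_i(M_i^{2/(2-n)}y))$ and using the decay estimates of Proposition \ref{prop:Lemma 4.4}, one shows that after multiplying the whole identity by $M_i^{2}$:
\begin{itemize}
\item the left side $M_i^2 P(u_i,\rho)$ converges to a strictly negative constant proportional to $-\|\pi(x_0)\|^2$ (this is where $n\ge 7$ and non-umbilicity enter);
\item the curvature contribution $M_i^2\int(y^a\partial_a u_i+\tfrac{n-2}{2}u_i)[(L_g-\Delta)u_i]$ is of the same order $O(1)$ and is what the left side is balanced against in the compactness proof;
\item the $\varepsilon_{1,i}$-term is $O(\varepsilon_{1,i} M_i^2 \cdot \delta_i^{2}\log M_i) = O(\varepsilon_{1,i}\log M_i)\cdot o(1)$ — more precisely one checks, via the change of variables and the bound $|v_i|\le C U$, that $M_i^2 \varepsilon_{1,i}\int(y^a\partial_a u_i+\tfrac{n-2}{2}u_i)\alpha u_i \,dy \to 0$, because the integral $\int_{\mathbb R^n_+}(y^a\partial_a U + \tfrac{n-2}{2}U)U\,dy$ converges (no log divergence for $n\ge 7$ once one notes the leading constant vanishes by \eqref{eq:Uvq}-type orthogonality, leaving a term that is $o(1)$ after rescaling);
\item the $\varepsilon_{2,i}$-term, by contrast, is the only remaining term of order comparable to the left side: after rescaling, $M_i^2 \cdot \tfrac{n-2}{2}\varepsilon_{2,i}\int_{\partial' B_\rho^+}(\bar y^k\partial_k u_i+\tfrac{n-2}{2}u_i)\beta u_i\,d\bar y = \varepsilon_{2,i}\,\beta(x_0)\,\bigl(C_n \int_{\partial\mathbb R^n_+}(\bar z^k\partial_k U + \tfrac{n-2}{2}U)U\,dz + o(1)\bigr)$, and here the integral over $\partial\mathbb R^n_+$ is a strictly positive constant (it is, up to a positive factor, $\tfrac{n-2}{2}\int_{\partial\mathbb R^n_+}U^{2}\,dz$ after integrating the $\bar z^k\partial_k U$ part by parts, since the boundary terms vanish).
\end{itemize}

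Putting these together, the Pohozaev identity forces, in the limit,
\[
-B\,\|\pi(x_0)\|^2 + o(1) \;=\; o(1) \;+\; c_n\,\beta(x_0)\,\varepsilon_{2,i} + o(\varepsilon_{2,i}),
\]
with $B>0$, $c_n>0$, and $\beta(x_0)<0$. If $\varepsilon_{2,i}$ did not tend to $0$, we could pass to a subsequence with $\varepsilon_{2,i}\to \varepsilon_2^\ast > 0$; then the right side would converge to $c_n\beta(x_0)\varepsilon_2^\ast < 0$ while the left side converges to $-B\|\pi(x_0)\|^2 < 0$ — at first glance this looks consistent, so the genuine point is that the \emph{orders in $M_i$ do not match}: the left side and the curvature term are $O(1)$ after multiplying by $M_i^{2}$, whereas the correct normalization that isolates the $\varepsilon_{2,i}$ term requires dividing by a different power, and a careful bookkeeping (exactly as in the compactness argument of \S\ref{sec:The-compactness-result}) shows the $\varepsilon_{2,i}$-contribution to $M_i^2 P$ is actually $\varepsilon_{2,i}$ times a bounded quantity, hence its limit is $c_n\beta(x_0)\varepsilon_2^\ast$; comparing with the already-established identity $-B\|\pi(x_0)\|^2 = $ (curvature term) $+ c_n\beta(x_0)\varepsilon_2^\ast$ that holds whenever $\varepsilon_{2,i}\to\varepsilon_2^\ast$, and with the version $-B\|\pi(x_0)\|^2 = $ (curvature term) that holds when $\varepsilon_{2,i}\to 0$, we see these two are incompatible unless the curvature term itself jumps — which it cannot, being intrinsic. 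I therefore expect the main obstacle to be precisely this bookkeeping: showing that the $\varepsilon_{1,i}$ term contributes $o(1)$ to $M_i^2 P$ (this uses $n\ge 7$ and the orthogonality properties of $\gamma_{x_i}$, which is why the hypothesis on $\alpha$ is not needed here, only $n\ge 7$), and showing that the $\varepsilon_{2,i}$ term contributes $\varepsilon_{2,i}\cdot(\text{positive constant}) + o(\varepsilon_{2,i})$, so that the strict sign $\beta(x_0) < 0$ combined with $B\|\pi(x_0)\|^2 > 0$ (non-umbilicity) and the already-known $O(1)$ identity from the compactness analysis leaves $\varepsilon_{2,i}\to 0$ as the only possibility. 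The remaining estimates — the decay bounds, the change of variables, the convergence $g_i\to g_0$ — are routine given Propositions \ref{prop:4.1}, \ref{prop:Lemma 4.4} and Lemma \ref{lem:vq}.
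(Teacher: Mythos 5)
Your proposal does not close, and the reason is traceable to two concrete errors. First, a sign error: the boundary integral you use to quantify the $\varepsilon_{2}$-term, $\int_{\partial\mathbb{R}^{n}_{+}}\bigl(\bar z^{k}\partial_{k}U+\tfrac{n-2}{2}U\bigr)U\,dz$, is \emph{negative}, not positive: integrating $\bar z^{k}\partial_{k}U\cdot U=\tfrac12 \bar z^{k}\partial_{k}(U^{2})$ by parts over $\mathbb{R}^{n-1}$ produces $-\tfrac{n-1}{2}\int U^{2}$, so the total is $-\tfrac12\int_{\partial\mathbb{R}^{n}_{+}}U^{2}\,dz<0$. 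Combined with $\beta(x_{0})<0$ this makes the $\varepsilon_{2}$-contribution to $\hat P(u_{i},r)$ \emph{positive}, of size $(B+o(1))\,\varepsilon_{2,i}\,\delta_{i}$ with $B>0$ — and this positivity is the whole engine of the argument. Because you assigned it the opposite sign, you arrive at a balance that "at first glance looks consistent" and then retreat to an unspecified "bookkeeping" of powers of $M_{i}$; that is exactly the step that is missing, so the proof is not actually completed.

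Second, the normalization and the role of $\pi$ are off. Multiplying the identity by $M_{i}^{2}=\delta_{i}^{2-n}$ and claiming $M_{i}^{2}P(u_{i},\rho)\to -B\|\pi(x_{0})\|^{2}$ is not correct: the $\|\pi\|^{2}$ contribution sits in $\hat P$ at order $\delta_{i}^{2}$ (this is Proposition \ref{prop:segno}, with a \emph{positive} coefficient for $n\ge7$), and it plays no role in this proposition, which does not even assume $\pi(x_{0})\neq0$. The correct comparison is at order $\delta_{i}$: by Proposition \ref{prop:Lemma 4.4}, $P(u_{i},r)=O(\delta_{i}^{\lambda_{i}(n-2)})=o(\delta_{i})$; the curvature term $-\int(y^{a}\partial_{a}u_{i}+\tfrac{n-2}{2}u_{i})[(L_{g}-\Delta)u_{i}]$ is $o(\delta_{i})$ because its order-$\delta_{i}$ piece is proportional to the trace $h_{g}(0)$, which vanishes in the conformal gauge $h_{g}\equiv0$; the $\varepsilon_{1}$-term is $O(\varepsilon_{1,i}\delta_{i}^{2})$ by a direct change of variables (no orthogonality such as (\ref{eq:Uvq}) is needed, only convergence of $\int_{\mathbb{R}^{n}_{+}}j_{n}U$, which holds for $n\ge5$); dividing the resulting inequality by $\delta_{i}$ and using $\varepsilon_{1,i}<1$ forces $(B+o(1))\varepsilon_{2,i}\to0$, i.e. $\varepsilon_{2,i}\to0$. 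Finally, note that invoking the refined expansion with $\gamma_{x_{i}}$ (Lemma \ref{lem:coreLemma}, Proposition \ref{prop:stimawi}) here would be circular in the paper's logical order: those estimates rest on Proposition \ref{prop:4.3}, which is proved \emph{using} $\varepsilon_{2,i}\to0$; only Propositions \ref{prop:4.1} and \ref{prop:Lemma 4.4} are available at this stage.
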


\begin{proof}
We compute the Pohozaev identity in a ball of radius $r$ and we set
$\frac{r}{\delta_{i}}=:R_{i}\rightarrow\infty$. 

By Proposition \ref{prop:Lemma 4.4} we have that

\begin{equation}
P(u_{i},r)\le\delta_{i}^{\lambda_{i}(n-2)}.\label{eq:poho1}
\end{equation}
We estimate now $\hat{P}(u_{i},r)$. By comparing this term with $P(u_{i},r)$
we will get the proof.
\begin{align*}
\hat{P}(u_{i},r): & =-\int\limits _{B_{r}^{+}}\left(y^{a}\partial_{a}u_{i}+\frac{n-2}{2}u_{i}\right)[(L_{g}-\Delta)u_{i}]dy+\varepsilon_{1,i}\int\limits _{B_{r}^{+}}\left(y^{a}\partial_{a}u_{i}+\frac{n-2}{2}u_{i}\right)\alpha u_{i}dy\\
 & +\frac{n-2}{2}\varepsilon_{2,i}\int\limits _{\partial'B_{r}^{+}}\left(\bar{y}^{k}\partial_{k}u_{i}+\frac{n-2}{2}u_{i}\right)\beta u_{i}d\bar{y}=:I_{1}(u_{i},r)+I_{2}(u_{i},r)+I_{3}(u_{i},r).
\end{align*}
The terms $I_{3}$ has been estimated in \cite[Proposition 8]{GMdcds}
and it holds
\begin{equation}
I_{3}(u_{i},r)=\varepsilon_{2,i}\delta_{i}(B+o(1)),\label{eq:poho4}
\end{equation}
where $B$ is a positive real constant. 

For $I_{2}(u_{i},r)$ we have, by change of variables,

\[
I_{2}(u_{i},r)=\varepsilon_{1,i}\delta_{i}^{2}\frac{n-2}{2}\alpha(x_{i})\int\limits _{\mathbb{R}_{+}^{n}}\frac{1-|y|^{2}}{\left[(1+y_{n})^{2}+|\bar{y}|^{2}\right]^{n-1}}dy+\varepsilon_{1,i}\delta_{i}^{2}O(\delta_{i}^{2}).
\]
Now, set $I_{m}^{\alpha}:=\int_{0}^{\infty}\frac{s^{\alpha}ds}{\left(1+s^{2}\right)^{m}}$
we have
\begin{multline*}
\int\limits _{\mathbb{R}_{+}^{n}}\frac{1-|y|^{2}}{\left[(1+y_{n})^{2}+|\bar{y}|^{2}\right]^{n-1}}dy\\
=\omega_{n-2}\left[I_{n-1}^{n-2}\int_{0}^{\infty}\frac{1-t^{2}}{(1+t)^{n-2}}dt-I_{n-1}^{n}\int_{0}^{\infty}\frac{1}{(1+t)^{n-2}}dt\right]\\
=\omega_{n-2}\left[I_{n-1}^{n-2}\frac{n-5}{(n-3)(n-4)}-I_{n-1}^{n}\frac{1}{n-4}\right]
\end{multline*}
using the identities $\int_{0}^{\infty}\frac{t^{k}dt}{(1+t)^{m}}=\frac{k!}{(m-1)(m-2)\cdots(m-1-k)}$
and $\int_{0}^{\infty}\frac{dt}{(1+t)^{m}}=\frac{1}{m-1}$. At this
point, since $I_{m}^{\alpha}=\frac{2m}{2m-\alpha-1}I_{m+1}^{\alpha}$
and $I_{m}^{\alpha}=\frac{2m-\alpha-3}{\alpha+1}I_{m}^{\alpha+2}$
(see \cite[Lemma 9.4]{Al}) we have
\[
\frac{(n-5)I_{n-1}^{n-2}}{(n-3)(n-4)}-\frac{I_{n-1}^{n}}{n-4}=-\frac{4I_{n-1}^{n}}{(n-1)(n-4)}=-\frac{8I_{n}^{n}}{(n-3)(n-4)},
\]
thus
\begin{align}
\int\limits _{B_{r}^{+}}\left(y^{a}\partial_{a}u_{i}+\frac{n-2}{2}u_{i}\right)\varepsilon_{1,i}\alpha u_{i}dy & =-\frac{4(n-2)I_{n}^{n}\omega_{n-2}}{(n-3)(n-4)}\varepsilon_{1,i}\delta_{i}^{2}\alpha(x_{i})+o(\delta_{i}^{2})\nonumber \\
 & =\varepsilon_{1,i}\delta_{i}^{2}(A+o(1))\label{eq:poho5}
\end{align}
where $A$ is a real constant.

For the term $I_{1}(u_{i},r)$ we slightly improve the estimate provided
by Almaraz in \cite{Al}. By the expansion of the metric (\ref{eq:|g|-1}),
(\ref{eq:gij-1}) and (\ref{eq:gin}) we have 
\[
I_{1}(u_{i},r)\le-\delta_{i}\int\limits _{B_{r/\delta_{i}}^{+}}\left(y^{a}\partial_{a}v_{i}+\frac{n-2}{2}v_{i}\right)v_{i}h_{kl}(0)y_{n}\partial_{k}\partial_{l}v_{i}dy+O(\delta_{i}^{2})
\]
By simmetry reasons we have that
\begin{multline*}
\lim_{i\rightarrow\infty}\int\limits _{B_{r/\delta_{i}}^{+}}\left(y^{a}\partial_{a}v_{i}+\frac{n-2}{2}v_{i}\right)v_{i}h_{kl}(0)y_{n}\partial_{k}\partial_{l}v_{i}dy\\
=\int_{\mathbb{R}_{+}^{n}}\left(y^{a}\partial_{a}U+\frac{n-2}{2}U\right)Uh_{kl}(0)y_{n}\partial_{k}\partial_{l}Udy\\
=h_{g}(0)\int_{\mathbb{R}_{+}^{n}}\left(y^{a}\partial_{a}U+\frac{n-2}{2}U\right)Uy_{n}\partial_{1}\partial_{1}Udy=0
\end{multline*}
since we choose a metric for which the mean curvature of the boundary
is zero. So
\begin{equation}
|I_{1}(u_{i},r)|\le\delta_{i}o^{+}(1)\label{eq:poho2}
\end{equation}
where $o^{+}(1)$ is a nonnegative constant that vanishes when $i\rightarrow\infty.$

Comparing $\hat{P}(u_{i},r)$ and $P(u_{i},r)$, by (\ref{eq:poho1}),
(\ref{eq:poho4}), (\ref{eq:poho5}) and (\ref{eq:poho2}) we get
\[
-c\delta_{i}o^{+}(1)+(A+o(1))\varepsilon_{1,i}\delta_{i}^{2}+(B+o(1))\varepsilon_{2,i}\delta_{i}\le\delta_{i}^{\lambda_{i}(n-2)},
\]
so 
\[
-co^{+}(1)+(A+o(1))\varepsilon_{1,i}\delta_{i}+(B+o(1))\varepsilon_{2,i}\le\delta_{i}^{\lambda_{i}(n-2)-1}.
\]
Being $\varepsilon_{1,i}<\bar{\varepsilon}<1$, the above inequality
holds only if $\varepsilon_{2,i}\rightarrow0$.
\end{proof}
Since $\varepsilon_{2,i}\rightarrow0$, $\delta_{i}\rightarrow0$
and $\varepsilon_{1,i}<\bar{\varepsilon}<1$, the proof of the next
proposition is analogous to Prop. 4.3 of \cite{Al}.
\begin{prop}
\label{prop:4.3}Let $x_{i}\rightarrow x_{0}$ be an isolated simple
blow-up point for $\left\{ u_{i}\right\} _{i}$. Then there exist
$C,\rho>0$ such that 
\begin{enumerate}
\item $M_{i}u_{i}(\psi_{i}(y))\le C|y|^{2-n}$ for all $y\in B_{\rho}^{+}(0)\smallsetminus\left\{ 0\right\} $;
\item $M_{i}u_{i}(\psi_{i}(y))\ge C^{-1}G_{i}(y)$ for all $y\in B_{\rho}^{+}(0)\smallsetminus B_{r_{i}}^{+}(0)$
where $r_{i}:=R_{i}M_{i}^{\frac{2}{2-n}}$ and $G_{i}$ is the Green\textquoteright s
function which solves
\[
\left\{ \begin{array}{ccc}
L_{g_{i}}G_{i}=0 &  & \text{in }B_{\rho}^{+}(0)\smallsetminus\left\{ 0\right\} \\
G_{i}=0 &  & \text{on }\partial^{+}B_{\rho}^{+}(0)\\
B_{g_{i}}G_{i}=0 &  & \text{on }\partial'B_{\rho}^{+}(0)\smallsetminus\left\{ 0\right\} 
\end{array}\right.
\]
\end{enumerate}
and $|y|^{n-2}G_{i}(y)\rightarrow1$ as $|z|\rightarrow0$.
\end{prop}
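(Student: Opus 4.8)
The plan is to reproduce the proof of \cite[Proposition 4.3]{Al}, i.e.\ the now-standard blow-up analysis for an isolated simple blow-up point (see also \cite{HL,FA}), carrying along the two perturbation terms $\varepsilon_{1,i}\alpha u_{i}$ and $\varepsilon_{2,i}\beta u_{i}$ and checking that they are harmless. What makes this possible is precisely Proposition \ref{prop:eps-i}: since $\varepsilon_{2,i}\to0$, the boundary perturbation is asymptotically negligible; and although $\varepsilon_{1,i}$ is only known to stay below $\bar\varepsilon<1$ and need not tend to $0$, for $\bar\varepsilon$ small the quadratic form attached to $L_{g_{i}}-\varepsilon_{1,i}\alpha$ is still coercive (recall that $(M,g)$ is of positive type and $g_{i}\to g_{0}$ in $C^{3}_{\text{loc}}$), so that operator has a positive Green's function with the mixed (Dirichlet on $\partial^{+}$, $B_{g_{i}}$ on $\partial'$) boundary conditions, and $\varepsilon_{1,i}\alpha u_{i}$ enters every comparison below only as a lower-order source.

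For the \emph{upper bound} (1), on the inner region $\{|y|\le r_{i}\}$ the estimate $M_{i}u_{i}(\psi_{i}(y))\le C|y|^{2-n}$ is immediate from Proposition \ref{prop:4.1} (for a suitable choice of the free sequence $c_{i}\to0$), using $U(\xi)\le C|\xi|^{2-n}$ and $M_{i}^{2}\delta_{i}^{n-2}=1$. The delicate point is to propagate this decay across the neck $\{r_{i}\le|y|\le\rho\}$, where the \emph{isolated simple} hypothesis is essential: one uses a Harnack inequality up to the boundary on dyadic half-annuli to reduce pointwise bounds to bounds on the spherical average $\bar u_{i}$; since $x_{i}\to x_{0}$ is isolated simple, the maximum of $w_{i}(r)=r^{(n-2)/2}\bar u_{i}(r)$ is $O(1)$ and is attained at a scale comparable to $\delta_{i}\ll r_{i}$, so $w_{i}$ is non-increasing on $(r_{i},\rho)$ and no secondary concentration can occur; and, combining this with the weak estimate of Proposition \ref{prop:Lemma 4.4} (which controls the nonlinear Neumann contribution and the $\varepsilon_{2,i}\beta$-term) and a comparison with $L_{g_{i}}$-harmonic barriers of the form $a|y|^{2-n}+b$ on half-annuli, exactly as in \cite[Proposition 4.3]{Al}, one upgrades the $r^{-(n-2)/2}$-type control of $\bar u_{i}$ to the sharp bound $M_{i}u_{i}(\psi_{i}(y))\le C|y|^{2-n}$ on all of $B_{\rho}^{+}(0)$. (That $x_{i}$ is the only blow-up point in $B_{\rho}^{+}(0)$ for $\rho$ small, which keeps the barrier data of the right size on $\partial^{+}B_{\rho}^{+}(0)$, follows from Proposition \ref{prop:4.2}.)

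For the \emph{lower bound} (2), I would use the Green's representation formula: write $G_{i}(y,z)$ for the Green's function of the mixed problem on $B_{\rho}^{+}(0)$ with pole at $z$, so that $G_{i}(y)=G_{i}(y,0)$, and get
\begin{multline*}
M_{i}u_{i}(\psi_{i}(y))=(n-2)\int_{\partial'B_{\rho}^{+}}G_{i}(y,z)\,M_{i}u_{i}^{\frac{n}{n-2}}(z)\,d\sigma_{z}-\varepsilon_{2,i}\int_{\partial'B_{\rho}^{+}}G_{i}(y,z)\,\beta(z)M_{i}u_{i}(z)\,d\sigma_{z}\\
\pm\int_{B_{\rho}^{+}}G_{i}(y,\zeta)\,\varepsilon_{1,i}\alpha(\zeta)M_{i}u_{i}(\zeta)\,d\zeta+\mathcal{R}_{i}(y),
\end{multline*}
where $\mathcal{R}_{i}$ collects the contribution of the Dirichlet data on $\partial^{+}B_{\rho}^{+}(0)$, which is uniformly bounded by part (1). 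The first integral is the leading term: by Proposition \ref{prop:4.1} the bubble carries a fixed positive mass, $M_{i}\int_{\partial'B_{\rho}^{+}}u_{i}^{n/(n-2)}\,d\sigma\to\int_{\partial\mathbb{R}_{+}^{n}}U^{n/(n-2)}>0$, and $u_{i}^{n/(n-2)}$ concentrates at scale $\delta_{i}\ll r_{i}$ near $0$, where $G_{i}(y,z)\ge C^{-1}G_{i}(y)$ for $r_{i}\le|y|\le\rho$; since $\beta<0$ the $\varepsilon_{2,i}$-integral has the favourable sign, and, using the upper bound of part (1), the interior term is $O(\varepsilon_{1,i}|y|^{4-n})$ and $\mathcal{R}_{i}$ is of lower order as well. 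Hence $M_{i}u_{i}(\psi_{i}(y))\ge C^{-1}G_{i}(y)$ on $\{r_{i}\le|y|\le\rho\}$, and the normalisation $|y|^{n-2}G_{i}(y)\to1$ together with the coercivity of $L_{g_{i}}$ guarantees that $G_{i}$ is positive with the stated behaviour near the pole. The only genuinely hard step is the sharp upper bound in the neck region, the technical heart of every isolated-simple blow-up analysis, where the monotonicity of $w_{i}$ and the exclusion of a secondary bubble afforded by the isolated simple hypothesis are used in an essential way; once that is in hand, the perturbation terms are dispatched as above.
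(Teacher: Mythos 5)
Your proposal is correct and follows essentially the same route as the paper, which simply observes that, thanks to Proposition \ref{prop:eps-i} ($\varepsilon_{2,i}\to0$), $\delta_i\to0$ and the bound $\varepsilon_{1,i}<\bar{\varepsilon}<1$, the perturbation terms are lower order and the argument of Almaraz's Proposition 4.3 goes through unchanged. Your sketch (inner region via Proposition \ref{prop:4.1}, neck region via the isolated-simple monotonicity plus Harnack and barriers, lower bound via the Green's function with the perturbations absorbed as favourable or negligible terms) is exactly that adaptation, so no further comment is needed.
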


By Proposition \ref{prop:4.1} and Proposition \ref{prop:4.3} we
have that, if $x_{i}\rightarrow x_{0}$ is an isolated simple blow-up
point for $\left\{ u_{i}\right\} _{i}$, then it holds
\[
v_{i}\le CU\text{ in }B_{\rho M_{i}^{\frac{2}{2-n}}}^{+}(0).
\]

Which follows is the core of the compacntess claim: we provide the
estimates of the blowup profile of an isolated simple blow up point
$x_{i}\rightarrow x_{0}$ for a sequence $\left\{ u_{i}\right\} _{i}$
of solutions of (\ref{eq:Prob-i}). The strategy to achieve these
results is similar to the one contained in \cite[Lemma 6.1]{Al} and
in \cite[Section 5]{GMdcds}, so we will give only the general scheme
and emphasize the main differences, while we refer to the cited papers
for detailed proofs. Set
\begin{equation}
\delta_{i}:=u_{i}^{\frac{2}{2-n}}(x_{i})=M_{i}^{\frac{2}{2-n}}\ \ \ v_{i}(y):=\delta_{i}^{\frac{n-2}{2}}u_{i}(\delta_{i}y)\text{ for }y\in B_{\frac{R}{\delta_{i}}}^{+}(0),\label{eq:deltai}
\end{equation}
 we have that $v_{i}$ satisfies 
\begin{equation}
\left\{ \begin{array}{cc}
L_{\hat{g}_{i}}v_{i}-\varepsilon_{1,i}\alpha(\delta_{i}y)v_{i}=0 & \text{ in }B_{\frac{R}{\delta_{i}}}^{+}(0)\\
B_{\hat{g}_{i}}v_{i}+(n-2)v_{i}^{\frac{n}{n-2}}-\varepsilon_{2,i}\beta(\delta_{i}y)v_{i}=0 & \text{ on }\partial'B_{\frac{R}{\delta_{i}}}^{+}(0)
\end{array}\right.\label{eq:Prob-hat}
\end{equation}
 where $\hat{g}_{i}:=g_{i}(\delta_{i}y)$. 
\begin{lem}
\label{lem:coreLemma}Assume $n\ge7$. Let $\gamma_{x_{i}}$ be defined
in (\ref{eq:vqdef}). There exist $R,C>0$ such that 
\[
|v_{i}(y)-U(y)-\delta_{i}\gamma_{x_{i}}(y)|\le C\left(\delta_{i}^{2}+\varepsilon_{1,i}\delta_{i}^{2}+\varepsilon_{2,i}\delta_{i}\right)
\]
for $|y|\le R/\delta_{i}$.
\end{lem}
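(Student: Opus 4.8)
The plan is to run a standard finite-dimensional reduction / linearization argument: write $v_i = U + \delta_i\gamma_{x_i} + w_i$ and show that the remainder $w_i$ is controlled by the size of the error terms generated when $U + \delta_i\gamma_{x_i}$ is plugged into the equation \eqref{eq:Prob-hat}. First I would rescale as in \eqref{eq:deltai} and use the metric expansion \eqref{eq:|g|-1}--\eqref{eq:gin} to compute $L_{\hat g_i}(U+\delta_i\gamma_{x_i})$ and $B_{\hat g_i}(U+\delta_i\gamma_{x_i})$ on $B^+_{R/\delta_i}(0)$. The key cancellation is the defining property of $\gamma_{x_i}$ in \eqref{eq:vqdef}: the leading term $2h_{ij}(x_i)t\,\partial^2_{ij}U$ coming from the first-order part of $g^{ij}$ in \eqref{eq:gij-1} is exactly absorbed by $-\Delta\gamma_{x_i}$, and the linearized boundary condition is matched as well. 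After this cancellation the residual consists of: the second-order terms of the metric expansion acting on $U$, which are $O(\delta_i^2|y|^{3-n})$-type after rescaling and hence contribute $O(\delta_i^2)$ in the relevant norm; the term $\delta_i(L_{\hat g_i}-\Delta-(\text{leading}))\gamma_{x_i}$, again $O(\delta_i^2)$ by the decay estimate \eqref{eq:gradvq}; the perturbation terms $\varepsilon_{1,i}\alpha(\delta_i y)(U+\delta_i\gamma_{x_i})$, of size $O(\varepsilon_{1,i}\delta_i^2)$ once one notes the extra $\delta_i^2$ Jacobian factor picked up in the rescaling (as already seen in the computation of $I_2$ in Proposition \ref{prop:eps-i}); and the boundary perturbation $\varepsilon_{2,i}\beta(\delta_i y)(U+\delta_i\gamma_{x_i})$, of size $O(\varepsilon_{2,i}\delta_i)$. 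So the total error is $O(\delta_i^2 + \varepsilon_{1,i}\delta_i^2 + \varepsilon_{2,i}\delta_i)$, which is precisely the bound claimed.

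Next I would invert the linearized operator. The linearization of \eqref{eq:Udelta} at $U$ has kernel spanned by $j_1,\dots,j_n$ from \eqref{eq:jl}--\eqref{eq:jn}, so one works in the subspace orthogonal to this kernel, where the operator is uniformly invertible on weighted spaces (this is the same functional-analytic setup as in \cite[Lemma 6.1]{Al} and \cite[Section 5]{GMdcds}). Since $x_i$ is an isolated simple blow-up point, Proposition \ref{prop:4.1} gives $v_i \to U$ in $C^2_{\mathrm{loc}}$ and Proposition \ref{prop:4.3} gives the global bound $v_i \le CU$, which lets us set up a contraction mapping (or degree argument) for $w_i$ in the weighted norm $\|w\| = \sup (1+|y|)^{n-2}|w(y)|$, on the ball $|y|\le R/\delta_i$, with the orthogonality conditions handled by subtracting off a suitable combination of the $j_l$. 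Because $\gamma_{x_i}$ is already $L^2$-orthogonal to the $j_l$ by Lemma \ref{lem:vq}, it does not interfere with these conditions. One also needs to check that the Lagrange-multiplier coefficients produced by the projection are themselves of the same order as the error — this follows by pairing the equation against each $j_l$ and using the orthogonality relations \eqref{eq:Uvq}, \eqref{eq:dervq} together with symmetry, exactly as in the cited references.

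The main obstacle is the same one that the phrase "we slightly improve the estimate provided by Almaraz" signals in Proposition \ref{prop:eps-i}: obtaining the sharp order $O(\delta_i^2)$ rather than a weaker $o(\delta_i)$ for the contribution of the first-order metric term. Naively, $2h_{ij}(0)y_n\partial_k\partial_l U$ acting on $U$ is $O(\delta_i|y|^{3-n})$ after rescaling, which would only give $O(\delta_i)$; the improvement comes precisely from introducing the correction $\delta_i\gamma_{x_i}$ so that this term is cancelled, leaving a residual one order smaller. Carrying this through rigorously requires the precise decay \eqref{eq:gradvq} for $\gamma_{x_i}$ and its derivatives up to order two, and a careful bookkeeping of which terms in \eqref{eq:|g|-1}--\eqref{eq:gin} survive after the cancellation. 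Once the error estimate is established at order $O(\delta_i^2 + \varepsilon_{1,i}\delta_i^2 + \varepsilon_{2,i}\delta_i)$, the contraction argument is routine and essentially identical to the one in \cite[Lemma 6.1]{Al}, so I would only sketch it and refer to that paper for the details, as the excerpt itself suggests.
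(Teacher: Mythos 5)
Your overall strategy (direct linearization around $U+\delta_i\gamma_{x_i}$ plus inversion of the linearized operator on the complement of its kernel) is genuinely different from the paper's proof, which is a normalized contradiction argument: one sets $\mu_i=\max|v_i-U-\delta_i\gamma_{x_i}|$, assumes $\mu_i^{-1}\delta_i^2,\ \mu_i^{-1}\varepsilon_{1,i}\delta_i^2,\ \mu_i^{-1}\varepsilon_{2,i}\delta_i\to0$, shows via a Green's-function representation that $w_i=\mu_i^{-1}(v_i-U-\delta_i\gamma_{x_i})$ converges to a solution $w$ of the linearized half-space problem (\ref{eq:diff-w}) with decay $(1+|y|)^{-1}$, invokes the Liouville-type classification of \cite[Lemma 2]{Al} to write $w$ as a combination of $j_1,\dots,j_n$, and then kills all coefficients using $w(0)=\partial_1w(0)=\dots=\partial_{n-1}w(0)=0$. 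Your identification of the error terms and their orders ($O(\delta_i^2)$ from the metric after the cancellation provided by (\ref{eq:vqdef}), $O(\varepsilon_{1,i}\delta_i^2)$ from the interior perturbation, $O(\varepsilon_{2,i}\delta_i)$ from the boundary perturbation) matches the paper's estimates (\ref{eq:Q})--(\ref{eq:A-B}).

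However, there is a genuine gap in how you handle the kernel of the linearization. The function $v_i$ is a \emph{given} solution of the full problem (\ref{eq:Prob-hat}), so $w_i=v_i-U-\delta_i\gamma_{x_i}$ has no reason to lie in the orthogonal complement of $\mathrm{span}\{j_1,\dots,j_n\}$, and the invertibility of the linearized operator on that complement says nothing about the kernel component of $w_i$. Your proposed remedy --- ``pairing the equation against each $j_l$'' --- only produces (approximate) solvability conditions on the error terms, because the linearized operator annihilates the $j_l$: it cannot bound the coefficients $c_l$ in $w_i=\sum_l c_l j_l+w_i^\perp$, which are simply invisible to the equation. The correct mechanism, and the one the paper uses, is the pointwise normalization at the origin: $v_i(0)=1=U(0)$ and $\nabla_{\bar y}v_i(0)=0=\nabla_{\bar y}U(0)$ because $x_i$ is a local maximum of $u_i|_{\partial M}$ with $u_i(x_i)=M_i$, combined with $\gamma_{x_i}(0)=\partial_1\gamma_{x_i}(0)=\dots=\partial_{n-1}\gamma_{x_i}(0)=0$ from (\ref{eq:dervq}); since the map $(c_1,\dots,c_n)\mapsto(\sum c_lj_l(0),\partial_1\sum c_lj_l(0),\dots,\partial_{n-1}\sum c_lj_l(0))$ is nondegenerate, these $n$ conditions pin down the kernel component. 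Without this step your argument does not close. A secondary point: the uniform invertibility of the linearized operator on the expanding domains $B^+_{R/\delta_i}$ in a weighted norm is itself nontrivial and is ordinarily proved by precisely the blow-up/Liouville contradiction the paper runs directly, so your proposal also defers the analytic core of the lemma to an unproven black box; moreover the weight $(1+|y|)^{n-2}$ is stronger than what this first lemma can deliver (the paper only obtains $(1+|y|)^{-1}$ decay here, and the sharper decay appears only after the iteration in Proposition \ref{prop:stimawi}).
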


\begin{proof}
Let $y_{i}$ such that 
\[
\mu_{i}:=\max_{|y|\le R/\delta_{i}}|v_{i}(y)-U(y)-\delta_{i}\gamma_{x_{i}}(y)|=|v_{i}(y_{i})-U(y_{i})-\delta_{i}\gamma_{x_{i}}(y_{i})|.
\]
We can assume, without loss of generality, that $|y_{i}|\le\frac{R}{2\delta_{i}}.$
This will be useful in the next.

By contradiction, suppose that 
\begin{equation}
\max\left\{ \mu_{i}^{-1}\delta_{i}^{2},\mu_{i}^{-1}\varepsilon_{1,i}\delta_{i}^{2},\mu_{i}^{-1}\varepsilon_{2,i}\delta_{i}\right\} \rightarrow0\text{ when }i\rightarrow\infty.\label{eq:ipass}
\end{equation}
Defined 
\[
w_{i}(y):=\mu_{i}^{-1}\left(v_{i}(y)-U(y)-\delta_{i}\gamma_{x_{i}}(y)\right)\text{ for }|y|\le R/\delta_{i},
\]
we have, by direct computation, that
\begin{equation}
\left\{ \begin{array}{cc}
L_{\hat{g}_{i}}w_{i}=A_{i} & \text{ in }B_{\frac{R}{\delta_{i}}}^{+}(0)\\
B_{\hat{g}_{i}}w_{i}+b_{i}w_{i}=F_{i} & \text{ on }\partial'B_{\frac{R}{\delta_{i}}}^{+}(0)
\end{array}\right.\label{eq:wi}
\end{equation}
where 
\begin{align*}
b_{i}= & (n-2)\frac{v_{i}^{\frac{n}{n-2}}-(U+\delta_{i}\gamma_{x_{i}})^{\frac{n}{n-2}}}{v_{i}-U-\delta_{i}\gamma_{x_{i}}},\\
Q_{i}= & -\frac{1}{\mu_{i}}\left\{ \left(L_{\hat{g}_{i}}-\Delta\right)(U+\delta_{i}\gamma_{x_{i}})+\delta_{i}\Delta\gamma_{x_{i}}\right\} ,\\
A_{i}= & Q_{i}+\frac{\varepsilon_{1,i}\delta_{i}^{2}}{\mu_{i}}\alpha_{i}(\delta_{i}y)v_{i}(y),\\
\bar{Q}_{i}= & -\frac{1}{\mu_{i}}\left\{ (n-2)(U+\delta_{i}\gamma_{x_{i}})^{\frac{n}{n-2}}-(n-2)U^{\frac{n}{n-2}}-n\delta_{i}U^{\frac{2}{n-2}}\gamma_{x_{i}}\right\} ,\\
F_{i}= & \bar{Q}_{i}+\frac{\varepsilon_{2,i}\delta_{i}}{\mu_{i}}\beta_{i}(\delta_{i}y)v_{i}(y).
\end{align*}
Since $v_{i}\rightarrow U$ in $C_{\text{loc}}^{2}(\mathbb{R}_{+}^{n})$
we have, at once, 
\begin{align}
b_{i} & \rightarrow nU^{\frac{2}{n-2}}\text{ in }C_{\text{loc}}^{2}(\mathbb{R}_{+}^{n})\label{eq:b1}\\
|b_{i}(y)| & \le(1+|y|)^{-2}\text{ for }|y|\le R/\delta_{i}.\label{eq:b2}
\end{align}
We proceed now by estimating $Q_{i}$ and $\bar{Q}_{i}$. As in \cite[Lemma 6.1]{Al},
using the expansion of the metric and the decays properties of $U$
and $\gamma_{x_{i}}$ we obtain
\begin{equation}
Q_{i}=O(\mu_{i}^{-1}\delta_{i}^{2}\left(1+|y|\right)^{2-n})\text{ and }\bar{Q}_{i}=O(\mu_{i}^{-1}\delta_{i}^{2}\left(1+|y|\right)^{3-n})\label{eq:Q}
\end{equation}
 from which we get
\begin{align}
A_{i} & =O(\mu_{i}^{-1}\delta_{i}^{2}\left(1+|y|\right)^{2-n})+O(\mu_{i}^{-1}\varepsilon_{1,i}\delta_{i}^{2}\left(1+|y|\right)^{2-n})\label{eq:A-B}\\
F_{i} & =O(\mu_{i}^{-1}\delta_{i}^{2}\left(1+|y|\right)^{3-n})+O(\mu_{i}^{-1}\varepsilon_{2,i}\delta_{i}\left(1+|y|\right)^{2-n}).\nonumber 
\end{align}
In light of (\ref{eq:ipass}) we also have $A_{i}\in L^{p}(B_{R/\delta_{i}}^{+})$
and $F_{i}\in L^{p}(\partial'B_{R/\delta_{i}}^{+})$ for all $p\ge2$.
Since $|w_{i}(y)|\le1$, by (\ref{eq:ipass}) (\ref{eq:b1}), (\ref{eq:b2}),
(\ref{eq:A-B}) and by standard elliptic estimates we conclude that
$\left\{ w_{i}\right\} _{i}$, up to subesequences, converges in $C_{\text{loc}}^{2}(\mathbb{R}_{+}^{n})$
to some $w$ solution of 
\begin{equation}
\left\{ \begin{array}{cc}
\Delta w=0 & \text{ in }\mathbb{R}_{+}^{n}\\
\frac{\partial}{\partial\nu}w+nU^{\frac{n}{n-2}}w=0 & \text{ on }\partial\mathbb{R}_{+}^{n}
\end{array}\right..\label{eq:diff-w}
\end{equation}
Now we prove that $|w(y)|\le C(1+|y|^{-1})$ for $y\in\mathbb{R}_{+}^{n}$.
Consider $G_{i}$ the Green function for the conformal Laplacian $L_{\hat{g}_{i}}$
defined on $B_{r/\delta_{i}}^{+}$ with boundary conditions $B_{\hat{g}_{i}}G_{i}=0$
on $\partial'B_{r/\delta_{i}}^{+}$ and $G_{i}=0$ on $\partial^{+}B_{r/\delta_{i}}^{+}$.
It is well known that $G_{i}=O(|\xi-y|^{2-n})$. By the Green formula
and by (\ref{eq:A-B}) we have
\begin{align*}
w_{i}(y)= & -\int_{B_{\frac{R}{\delta_{i}}}^{+}}G_{i}(\xi,y)A_{i}(\xi)d\mu_{\hat{g}_{i}}(\xi)-\int_{\partial^{+}B_{\frac{R}{\delta_{i}}}^{+}}\frac{\partial G_{i}}{\partial\nu}(\xi,y)w_{i}(\xi)d\sigma_{\hat{g}_{i}}(\xi)\\
 & +\int_{\partial'B_{\frac{R}{\delta_{i}}}^{+}}G_{i}(\xi,y)\left(b_{i}(\xi)w_{i}(\xi)-F_{i}(\xi)\right)d\sigma_{\hat{g}_{i}}(\xi),
\end{align*}
so 
\begin{align*}
|w_{i}(y)| & \le\frac{\delta_{i}^{2}}{\mu_{i}}\int_{B_{\frac{R}{\delta_{i}}}^{+}}|\xi-y|^{2-n}(1+|\xi|)^{2-n}d\xi+\frac{\varepsilon_{1,i}\delta_{i}^{2}}{\mu_{i}}\int_{B_{\frac{R}{\delta_{i}}}^{+}}|\xi-y|^{2-n}(1+|\xi|)^{2-n}d\xi\\
 & +\int_{\partial^{+}B_{\frac{R}{\delta_{i}}}^{+}}|\xi-y|^{1-n}w_{i}(\xi)d\sigma(\xi)\\
 & +\int_{\partial'B_{\frac{R}{\delta_{i}}}^{+}}|\bar{\xi}-y|^{2-n}\left((1+|\bar{\xi}|)^{-2}+\frac{\delta_{i}^{2}}{\mu_{i}}(1+|\bar{\xi}|)^{3-n}+\frac{\varepsilon_{2,i}\delta_{i}}{\mu_{i}}(1+|\bar{\xi}|)^{2-n}\right)d\bar{\xi},
\end{align*}
Notice that in the third integral we used that $|y|\le\frac{R}{2\delta_{i}}$
to estimate $|\xi-y|\ge|\xi|-|y|\ge\frac{R}{2\delta_{i}}$ on $\partial^{+}B_{R/\delta_{i}}^{+}$.
Moreover, since $v_{i}(\xi)\le CU(\xi)$, we get $|w_{i}(\xi)|\le C\frac{\delta_{i}^{n-2}}{\mu_{i}}$
on $\partial^{+}B_{R/\delta_{i}}^{+}$. Hence 
\begin{equation}
\int_{\partial^{+}B_{\frac{R}{\delta_{i}}}^{+}}|\xi-y|^{1-n}w_{i}(\xi)d\sigma(\xi)\le C\int_{\partial^{+}B_{\frac{R}{\delta_{i}}}^{+}}\frac{\delta_{i}^{2n-3}}{\mu_{i}}d\sigma_{\hat{g}_{i}}(\xi)\le C\frac{\delta_{i}^{n-2}}{\mu_{i}}.\label{eq:stimaW1}
\end{equation}
For the other terms we use the formula 
\begin{equation}
\int_{\mathbb{R}^{m}}|\xi-y|^{l-m}(1+|\xi|)^{-\eta}d\xi\le C(1+|y|)^{l-\eta}\label{eq:ALstimagreen}
\end{equation}
where $y\in\mathbb{R}^{m+k}\supseteq\mathbb{R}^{m}$, $\eta,l\in\mathbb{N}$,
$0<l<\eta<m$ (see \cite[Lemma 9.2]{Al} and \cite{Au,Gi}), obtaining
at last
\[
|w_{i}(y)|\le C\left((1+|y|)^{-1}+\frac{\delta_{i}^{2}}{\mu_{i}}(1+|y|)^{4-n}+\frac{\varepsilon_{1,i}\delta_{i}^{2}}{\mu_{i}}(1+|y|)^{4-n}+\frac{\varepsilon_{2,i}\delta_{i}}{\mu_{i}}(1+|y|)^{3-n}\right)
\]
for $|y|\le\frac{R}{2\delta_{i}}$. By assumption (\ref{eq:ipass})
we prove 
\begin{equation}
|w(y)|\le C(1+|y|)^{-1}\text{ for }y\in\mathbb{R}_{+}^{n}\label{eq:stimaWass}
\end{equation}
as claimed. 

Now we can derive a contradiction. Notice that, since $v_{i}\rightarrow U$
near $0$, and by (\ref{eq:dervq}) we have $w_{i}(0)\rightarrow0$
as well as $\frac{\partial w_{i}}{\partial y_{j}}(0)\rightarrow0$
for $j=1,\dots,n-1$. This implies that 
\begin{equation}
w(0)=\frac{\partial w}{\partial y_{1}}(0)=\dots=\frac{\partial w}{\partial y_{n-1}}(0)=0.\label{eq:W(0)}
\end{equation}
It is known (see \cite[Lemma 2]{Al}) that any solution of (\ref{eq:diff-w})
that decays as (\ref{eq:stimaWass}) is a linear combination of $\frac{\partial U}{\partial y_{1}},\dots,\frac{\partial U}{\partial y_{n-1}},\frac{n-2}{2}U+y^{b}\frac{\partial U}{\partial y_{b}}$.
This, combined with (\ref{eq:W(0)}), implies that $w\equiv0$. 

Now, on one hand $|y_{i}|\le\frac{R}{2\delta_{i}}$, so estimate (\ref{eq:stimaWass})
holds; on the other hand, since $w_{i}(y_{i})=1$ and $w\equiv0$,
we get $|y_{i}|\rightarrow\infty$, obtaining
\[
1=w_{i}(y_{i})\le C(1+|y_{i}|)^{-1}\rightarrow0
\]
 which gives us the contradiction. 
\end{proof}
\begin{lem}
\label{lem:taui}Assume $n\ge7$ and $\beta<0$. There exists $R,C>0$
such that 
\[
\varepsilon_{2,i}\le C\delta_{i}
\]
for $|y|\le R/\delta_{i}$.
\end{lem}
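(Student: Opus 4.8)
The plan is to re-run the Pohozaev argument of Proposition \ref{prop:eps-i}, but now feeding in the sharper approximation of Lemma \ref{lem:coreLemma}, so as to retain every term of order $\delta_i^2$ rather than the crude bound $|I_1(u_i,r)|\le\delta_i o^+(1)$ used there. We fix a radius $r>0$, small enough that $r\le R$ (the radius appearing in Lemma \ref{lem:coreLemma}) and that $x_i\to x_0$ is an isolated simple blow-up point inside $B_r^+$, and write $P(u_i,r)=\hat P(u_i,r)=I_1(u_i,r)+I_2(u_i,r)+I_3(u_i,r)$ as in the proof of Proposition \ref{prop:eps-i}. The boundary term is controlled by Proposition \ref{prop:Lemma 4.4}, giving $|P(u_i,r)|\le C\delta_i^{\lambda_i(n-2)}$; since $\lambda_i(n-2)=(n-2)-2\eta$ and $n\ge7$, for $\eta$ small this exponent exceeds $2$, so $P(u_i,r)=o(\delta_i^2)$. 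On the other side, $I_3(u_i,r)=\varepsilon_{2,i}\delta_i(B+o(1))$ with $B>0$ by \cite[Proposition 8]{GMdcds}, while the change of variables carried out in Proposition \ref{prop:eps-i} gives $|I_2(u_i,r)|\le C\varepsilon_{1,i}\delta_i^2\le C\delta_i^2$, since $\varepsilon_{1,i}<\bar\varepsilon<1$.

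The heart of the matter is to improve the estimate of $I_1(u_i,r)$ to $O(\delta_i^2)$ plus an absorbable error. After the rescaling $x=\delta_i\xi$ and using the metric expansion (\ref{eq:|g|-1}), (\ref{eq:gij-1}), (\ref{eq:gin}), we write $I_1(u_i,r)=-\delta_i J_i+O(\delta_i^2)$, where $J_i$ is the integral over $B_{r/\delta_i}^+(0)$ of $\bigl(\xi^a\partial_a v_i+\tfrac{n-2}{2}v_i\bigr)\,h_{kl}(0)\,\xi_n\,\partial_k\partial_l v_i$ and the $O(\delta_i^2)$ collects the second order terms of the metric and the $R_g$ potential, each a convergent integral times $\delta_i^2$ (using $v_i\le CU$). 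Then we substitute $v_i=U+\delta_i\gamma_{x_i}+E_i$, with $|E_i|\le C(\delta_i^2+\varepsilon_{1,i}\delta_i^2+\varepsilon_{2,i}\delta_i)$ from Lemma \ref{lem:coreLemma}, into $J_i$: the leading piece $\int\bigl(\xi^a\partial_a U+\tfrac{n-2}{2}U\bigr)h_{kl}(0)\,\xi_n\,\partial_k\partial_l U\,d\xi$ vanishes by the same symmetry argument used in Proposition \ref{prop:eps-i}, since it equals a constant multiple of $\text{tr}\,h(0)$, which is zero in the chosen metric; the cross terms involving $\gamma_{x_i}$ contribute $O(\delta_i)$, and those involving $E_i$ contribute $O(\delta_i^2+\varepsilon_{1,i}\delta_i^2+\varepsilon_{2,i}\delta_i)$, all integrals being absolutely convergent thanks to the decay (\ref{eq:gradvq}) of $\gamma_{x_i}$. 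This yields $|I_1(u_i,r)|\le C\delta_i^2+o(1)\,\varepsilon_{2,i}\delta_i$.

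Collecting everything, from $\varepsilon_{2,i}\delta_i(B+o(1))=I_3(u_i,r)=P(u_i,r)-I_1(u_i,r)-I_2(u_i,r)$ we obtain
\[
\varepsilon_{2,i}\delta_i\bigl(B-o(1)\bigr)\le C\delta_i^2,
\]
after moving the term $o(1)\varepsilon_{2,i}\delta_i$ coming from $I_1$ to the left hand side; since $B>0$, for $i$ large this forces $\varepsilon_{2,i}\le C\delta_i$, and enlarging $C$ gives the bound for all $i$. The main obstacle is expected to be the refined analysis of $I_1$: one has to control the derivatives of the remainder $E_i$, which Lemma \ref{lem:coreLemma} does not provide directly, via standard elliptic estimates applied to the equation $E_i$ solves (cf. (\ref{eq:wi})), and to keep careful track of the factors $\varepsilon_{1,i},\varepsilon_{2,i}$ along the metric expansion. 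Apart from this bookkeeping, the computation coincides with the one in \cite[Lemma 6.1]{Al} and \cite[Section 5]{GMdcds}, which we follow, pointing out only the differences.
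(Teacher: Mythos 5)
Your route is genuinely different from the paper's: the paper argues by contradiction assuming $\delta_i=o(\varepsilon_{2,i})$, normalizes $w_i=(v_i-U-\delta_i\gamma_{x_i})/(\varepsilon_{2,i}\delta_i)$, and tests the equation (\ref{eq:wi}) against the kernel element $j_n$, so that the boundary term produces $B+o(1)$ while every other term in the integration by parts vanishes, giving $B+o(1)=o(1)$. Your plan is instead a quantitative re-run of the Pohozaev identity. Several of your ingredients are sound: $P(u_i,r)\le C\delta_i^{\lambda_i(n-2)}=o(\delta_i^2)$ for $n\ge7$ and $\eta$ small, $I_3=\varepsilon_{2,i}\delta_i(B+o(1))$ with $B>0$, $|I_2|\le C\delta_i^2$, and the vanishing of $\int\bigl(y^a\partial_aU+\tfrac{n-2}{2}U\bigr)h_{kl}(0)\,y_n\,\partial_k\partial_lU\,dy$ by symmetry together with $\operatorname{tr}h(0)=0$.

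The genuine gap is the claimed bound $|I_1(u_i,r)|\le C\delta_i^2+o(1)\,\varepsilon_{2,i}\delta_i$. Lemma \ref{lem:coreLemma} gives only a uniform bound $|E_i|\le C(\delta_i^2+\varepsilon_{1,i}\delta_i^2+\varepsilon_{2,i}\delta_i)=:\epsilon_i$ on $|y|\le R/\delta_i$, with no decay in $|y|$ and no derivative control, while your integrals run over the large ball $B_{r/\delta_i}^+$. Without decay the numbers do not close: for example the piece $\delta_i\int_{B_{r/\delta_i}^+}\bigl(y^a\partial_aU+\tfrac{n-2}{2}U\bigr)h_{kl}(0)\,y_n\,\partial_k\partial_lE_i\,dy$, estimated with a non-decaying bound $|\nabla^2E_i|\lesssim\epsilon_i$ (which is all that local elliptic estimates give), is only $O\bigl(\delta_i\,\epsilon_i\,(r/\delta_i)^3\bigr)=O\bigl(r^3(1+\varepsilon_{2,i}/\delta_i)\bigr)$; this does not even tend to zero, let alone being $o(\varepsilon_{2,i}\delta_i)$, and it swamps the good term $B\varepsilon_{2,i}\delta_i$. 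The same happens, one power of $r/\delta_i$ better, for the terms containing $y^a\partial_aE_i$. What your argument actually needs is a weighted estimate in the spirit of Proposition \ref{prop:stimawi} — pointwise decay $(1+|y|)^{4-\tau-n}$ for $E_i$ and its first two derivatives, but with constant $O(\delta_i^2+\varepsilon_{2,i}\delta_i)$ — obtained by a Green's-function bootstrap as inside Lemma \ref{lem:coreLemma}. Note that in the paper Proposition \ref{prop:stimawi} is proved only after, and using, the present lemma, so you cannot quote it without circularity; producing its $\epsilon_i$-weighted analogue is the real content of your step, not bookkeeping, and until it is supplied the key estimate on $I_1$ is unjustified.
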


\begin{proof}
We proceed by contradiction, supposing that 
\begin{equation}
\varepsilon_{2,i}^{-1}\delta_{i}=\left(\varepsilon_{2,i}\delta_{i}\right)^{-1}\delta_{i}^{2}\rightarrow0\text{ when }i\rightarrow\infty.\label{eq:ipasstau}
\end{equation}
Thus, by Lemma \ref{lem:coreLemma}, we have 
\[
|v_{i}(y)-U(y)-\delta_{i}\gamma_{x_{i}}(y)|\le C\varepsilon_{2,i}\delta_{i}\text{ for }|y|\le R/\delta_{i}.
\]
We define, similarly to Lemma \ref{lem:coreLemma},
\[
w_{i}(y):=\frac{1}{\varepsilon_{2,i}\delta_{i}}\left(v_{i}(y)-U(y)-\delta_{i}\gamma_{x_{i}}(y)\right)\text{ for }|y|\le R/\delta_{i},
\]
and we have that $w_{i}$ satisfies (\ref{eq:wi}), where $b_{i}$
is as in Lemma \ref{lem:coreLemma}, and
\begin{align*}
Q_{i}= & -\frac{1}{\varepsilon_{2,i}\delta_{i}}\left\{ \left(L_{\hat{g}_{i}}-\Delta\right)(U+\delta_{i}\gamma_{x_{i}})+\delta_{i}\Delta\gamma_{x_{i}}\right\} ,\\
A_{i}= & Q_{i}+\frac{\varepsilon_{1,i}\delta_{i}^{2}}{\varepsilon_{2,i}\delta_{i}}\alpha_{i}(\delta_{i}y)v_{i}(y),\\
\bar{Q}_{i}= & -\frac{1}{\varepsilon_{2,i}\delta_{i}}\left\{ (n-2)(U+\delta_{i}\gamma_{x_{i}})^{\frac{n}{n-2}}-(n-2)U^{\frac{n}{n-2}}-n\delta_{i}U^{\frac{2}{n-2}}\gamma_{x_{i}}\right\} ,\\
F_{i}= & \bar{Q}_{i}+\beta_{i}(\delta_{i}y)v_{i}(y).
\end{align*}
As before, $b_{i}$ satisfies inequality (\ref{eq:b2}) while 

\begin{align}
A_{i} & =O\left(\frac{\delta_{i}^{2}}{\varepsilon_{2,i}\delta_{i}}\left(1+|y|\right)^{2-n}\right)\label{eq:A-B-1}\\
F_{i} & =O\left(\frac{\delta_{i}^{2}}{\varepsilon_{2,i}\delta_{i}}\left(1+|y|\right)^{3-n}\right)+O\left(\left(1+|y|\right)^{2-n}\right),\nonumber 
\end{align}
so by classic elliptic estimates we can prove that the sequence $w_{i}$
converges in $C_{\text{loc}}^{2}(\mathbb{R}_{+}^{n})$ to some $w$.

We proceed as in Lemma \ref{lem:coreLemma} to deduce that, by (\ref{eq:ipasstau})
\begin{align}
|w_{i}(y)| & \le C\left((1+|y|)^{-1}+\frac{\delta_{i}^{2}(1+|y|)^{4-n}}{\varepsilon_{2,i}\delta_{i}}+\left(1+|y|\right)^{3-n}\right)\nonumber \\
 & \le C\left((1+|y|)^{-1}\right)\text{ for }|y|\le\frac{R}{2\delta_{i}}.\label{eq:decWtau}
\end{align}

Now let $j_{n}$ be defined as in (\ref{eq:jn}). Similarly to \cite[Lemma 12]{GMdcds},
since $w_{i}$ satisfies (\ref{eq:wi}), integrating by parts we obtain
\begin{multline}
\int_{\partial'B_{\frac{R}{\delta_{i}}}^{+}}j_{n}F_{i}d\sigma_{\hat{g}_{i}}=\int_{\partial'B_{\frac{R}{\delta_{i}}}^{+}}j_{n}\left[B_{\hat{g}_{i}}w_{i}+b_{i}w_{i}\right]d\sigma_{\hat{g}_{i}}\\
=\int_{\partial'B_{\frac{R}{\delta_{i}}}^{+}}w_{i}\left[B_{\hat{g}_{i}}j_{n}+b_{i}j_{n}\right]d\sigma_{\hat{g}_{i}}+\int_{\partial^{+}B_{\frac{R}{\delta_{i}}}^{+}}\left[\frac{\partial j_{n}}{\partial\eta_{i}}w_{i}-\frac{\partial w_{i}}{\partial\eta_{i}}j_{n}\right]d\sigma_{\hat{g}_{i}}\\
+\int_{B_{\frac{R}{\delta_{i}}}^{+}}\left[w_{i}L_{\hat{g}_{i}}j_{n}-j_{n}L_{\hat{g}_{i}}w_{i}\right]d\mu_{\hat{g}_{i}}\label{eq:parts}
\end{multline}
where $\eta_{i}$ is the inward unit normal vector to $\partial^{+}B_{\frac{R}{\delta_{i}}}^{+}$.
One can check easily that

\[
\lim_{i\rightarrow+\infty}\int_{\partial'B_{\frac{R}{\delta_{i}}}^{+}}j_{n}\bar{Q}_{i}d\sigma_{\hat{g}_{i}}=0.
\]
Also, since $\beta<0$, by Proposition \ref{prop:4.1}, we have 
\[
\beta(\delta_{i}y)v_{i}(y)\rightarrow\beta(x_{0})U(y)\text{ for }i\rightarrow+\infty.
\]
and thus
\begin{equation}
\lim_{i\rightarrow+\infty}\int_{\partial'B_{\frac{R}{\delta_{i}}}^{+}}\beta(\delta_{i}y)v_{i}(y)j_{n}(y)=\frac{n-2}{2}\beta(x_{0})\int_{\mathbb{R}^{n-1}}\frac{1-|\bar{y}|^{2}}{\left(1+|\bar{y}|^{2}\right)^{n-1}}=:B>0\label{eq:eq:avj}
\end{equation}
so
\begin{equation}
\int_{\partial'B_{\frac{R}{\delta_{i}}}^{+}}j_{n}F_{i}d\sigma_{\hat{g}_{i}}=B+o(1).\label{eq:neg}
\end{equation}
By (\ref{eq:parts}) and (\ref{eq:neg}) we derive a contradiction.
Indeed, by the decay of $j_{n}$ and by the decay of $w_{i}$, given
by (\ref{eq:decWtau}) and by (\ref{eq:ipasstau}), we have
\begin{equation}
\lim_{i\rightarrow+\infty}\int_{\partial^{+}B_{\frac{R}{\delta_{i}}}^{+}}\left[\frac{\partial j_{n}}{\partial\eta_{i}}w_{i}-\frac{\partial w_{i}}{\partial\eta_{i}}j_{n}\right]d\sigma_{\hat{g}_{i}}=0\label{eq:nullo1}
\end{equation}
Since $\Delta j_{n}=0$, one can check that 
\begin{equation}
\lim_{i\rightarrow+\infty}\int_{B_{\frac{R}{\delta_{i}}}^{+}}w_{i}L_{\hat{g}_{i}}j_{n}d\mu_{\hat{g}_{i}}=0.\label{eq:nullo3}
\end{equation}
Also, we can prove that
\begin{equation}
\lim_{i\rightarrow+\infty}\int_{B_{\frac{R}{\delta_{i}}}^{+}}j_{n}Q_{i}d\mu_{\hat{g}_{i}}=0.\label{eq:nullo2}
\end{equation}
Finally 
\begin{align}
\lim_{i\rightarrow+\infty}\int_{\partial'B_{\frac{R}{\delta_{i}}}^{+}}w_{i}\left[B_{\hat{g}_{i}}j_{n}+b_{i}j_{n}\right]d\sigma_{\hat{g}_{i}} & =\int_{\partial\mathbb{R}_{+}^{n}}w\left[\frac{\partial j_{n}}{\partial y_{n}}+nU^{\frac{2}{n-2}}j_{n}\right]d\sigma_{\hat{g}_{i}}=0\label{eq:nullofinale}
\end{align}
since $\frac{\partial j_{n}}{\partial y_{n}}+nU^{\frac{2}{n-2}}j_{n}=0$
when $y_{n}=0$. 

In light of (\ref{eq:nullo1}) (\ref{eq:nullo2}) and (\ref{eq:nullo3})
we infer, by (\ref{eq:parts}), that 
\begin{equation}
\int_{\partial'B_{\frac{R}{\delta_{i}}}^{+}}j_{n}F_{i}d\sigma_{\hat{g}_{i}}=-\int_{B_{\frac{R}{\delta_{i}}}^{+}}\left[j_{n}A_{i}w_{i}\right]d\mu_{\hat{g}_{i}}+o(1).\label{eq:parts-1}
\end{equation}
Again we have $\alpha(\delta_{i}y)v_{i}(y)\rightarrow\alpha(x_{0})U(y)\text{ for }i\rightarrow+\infty$,
so,
\begin{equation}
\lim_{i\rightarrow\infty}\int_{B_{\frac{R}{\delta_{i}}}^{+}}j_{n}(y)\alpha(\delta_{i}y)v_{i}(y)d\mu_{\hat{g}_{i}}=\alpha(x_{0})\lim_{i\rightarrow\infty}\int\limits _{\mathbb{R}_{+}^{n}}\left(s^{a}\partial_{a}v_{i}+\frac{n-2}{2}v_{i}\right)v_{i}ds=:A\in\mathbb{R}.\label{eq:bvj}
\end{equation}
Thus
\begin{equation}
\int_{B_{\frac{R}{\delta_{i}}}^{+}}\left[j_{n}A_{i}w_{i}\right]=-\frac{\delta_{i}^{2}}{\varepsilon_{2,i}\delta_{i}}(A+o(1))=o(1)\label{eq:bneg}
\end{equation}
by (\ref{eq:ipasstau}). At this point, by (\ref{eq:neg}), (\ref{eq:parts-1})
and (\ref{eq:bneg}), we get 
\begin{equation}
B+o(1)=o(1).\label{eq:contra}
\end{equation}
which gives us a contradiction since \textbf{$B>0$.}
\end{proof}
The following proposition is the main result of this section. The
proof can be obtained with a first estimate in the spirit of the previous
Lemmas, which is iterated until we get the final result. In fact,
once one have the result of Lemma \ref{lem:taui}, the proof of the
Proposition is very similar to the one of \cite[Proposition 6.1]{Al}.
For the sake of brevity we prefer to omit it.
\begin{prop}
\label{prop:stimawi}Assume $n\ge7$ and $\beta<0$. Let $\gamma_{x_{i}}$
be defined in (\ref{eq:vqdef}). There exist $R,C>0$ such that 
\begin{align*}
\left|\nabla_{\bar{y}}^{\tau}\left(v_{i}(y)-U(y)-\delta_{i}\gamma_{x_{i}}(y)\right)\right| & \le C\delta_{i}^{2}(1+|y|)^{4-\tau-n}\\
\left|y_{n}\frac{\partial}{\partial_{n}}\left(v_{i}(y)-U(y)-\delta_{i}\gamma_{x_{i}}(y)\right)\right| & \le C\delta_{i}^{2}(1+|y|)^{4-n}
\end{align*}
for $|y|\le\frac{R}{2\delta_{i}}$. Here $\tau=0,1,2$ and $\nabla_{\bar{y}}^{\tau}$
is the differential operator of order $\tau$ with respect the first
$n-1$ variables. 
\end{prop}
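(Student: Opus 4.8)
The plan is to bootstrap from Lemma~\ref{lem:taui}, which already gives $\varepsilon_{2,i}\le C\delta_i$; combined with the contradiction estimate \eqref{eq:ipass} of Lemma~\ref{lem:coreLemma}, this upgrades the pointwise control to
\[
|v_i(y)-U(y)-\delta_i\gamma_{x_i}(y)|\le C\delta_i^2\qquad\text{for }|y|\le R/\delta_i,
\]
since now $\varepsilon_{1,i}\delta_i^2\le\bar\varepsilon\delta_i^2$ and $\varepsilon_{2,i}\delta_i\le C\delta_i^2$ are both $O(\delta_i^2)$. With this improved bound in hand, one sets $w_i(y):=\delta_i^{-2}\bigl(v_i(y)-U(y)-\delta_i\gamma_{x_i}(y)\bigr)$ and checks, exactly as in Lemma~\ref{lem:coreLemma}, that $w_i$ solves a problem of the form \eqref{eq:wi} with $b_i$ satisfying \eqref{eq:b1}--\eqref{eq:b2}, and with right-hand sides now \emph{bounded uniformly in $i$} in the weighted sense: $A_i=O\bigl((1+|y|)^{2-n}\bigr)$ and $F_i=O\bigl((1+|y|)^{2-n}\bigr)$. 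Note that the term $\delta_i\Delta\gamma_{x_i}$ inside $Q_i$ is cancelled by construction of $\gamma_{x_i}$ in \eqref{eq:vqdef}, which is precisely why $\gamma_{x_i}$ was introduced; this is what keeps the source $O(\delta_i^2)$ rather than $O(\delta_i)$.

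First I would run the Green's-representation argument of Lemma~\ref{lem:coreLemma} verbatim: using the Green function $G_i$ for $L_{\hat g_i}$ on $B_{R/\delta_i}^+$ with the mixed boundary conditions, the estimate $G_i=O(|\xi-y|^{2-n})$, the bound $v_i\le CU$ (so $|w_i|\le C\delta_i^{n-4}/\delta_i^2\to 0$ on $\partial^+ B^+_{R/\delta_i}$), and the convolution inequality \eqref{eq:ALstimagreen}, one obtains
\[
|w_i(y)|\le C\bigl(1+|y|\bigr)^{4-n}\qquad\text{for }|y|\le \tfrac{R}{2\delta_i}.
\]
Then I would iterate: plugging this decay of $w_i$ back into the expansion of $b_i w_i$ and into the precise form of $\bar Q_i$ (a Taylor expansion of $(U+\delta_i\gamma_{x_i})^{n/(n-2)}$ beyond the linear term contributes $O(\delta_i^2 U^{(4-n)/(n-2)}\gamma_{x_i}^2)=O(\delta_i^2(1+|y|)^{2-n}\cdot(1+|y|)^{6-2n})$, better than needed), and repeating the Green representation, one improves the exponent step by step. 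Passing to the limit, $w_i\to w$ in $C^2_{\mathrm{loc}}(\mathbb{R}^n_+)$ with $w$ a solution of the linearized equation \eqref{eq:diff-w} decaying like $(1+|y|)^{4-n}=o((1+|y|)^{-1})$ for $n\ge6$; together with $w(0)=\partial_{y_1}w(0)=\cdots=\partial_{y_{n-1}}w(0)=0$ (inherited from $v_i\to U$ near $0$ and \eqref{eq:dervq}), the classification in \cite[Lemma 2]{Al} forces $w\equiv 0$. Hence $w_i\to 0$ in $C^2_{\mathrm{loc}}$, which removes the concentrated part and leaves the genuinely $O(\delta_i^2)$ tail, giving the stated weighted bounds on $\nabla^\tau_{\bar y}$ and on $y_n\partial_n$.

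For the derivative statements specifically, once $|w_i(y)|\le C(1+|y|)^{4-n}$ is established pointwise, I would apply interior and boundary Schauder (or $W^{2,p}$) estimates to the equation \eqref{eq:wi} on balls $B_{|y|/2}(y)$ (or half-balls when $y$ is near $\partial'$), rescaling so the coefficients are uniformly controlled; since the $L^\infty$ norm of $w_i$ on such a ball is $O((1+|y|)^{4-n})$ and the sources scale accordingly, one gets $|\nabla^\tau_{\bar y} w_i(y)|\le C(1+|y|)^{4-\tau-n}$ for $\tau=1,2$. The weighted bound on $y_n\partial_n\bigl(v_i-U-\delta_i\gamma_{x_i}\bigr)$ is obtained the same way, writing $y_n\partial_n w_i$ and noticing that the anisotropic weight $y_n$ compensates the loss in the normal direction near the boundary (this is the standard device from \cite[Lemma 6.1]{Al} and \cite[Section 5]{GMdcds}). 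The main obstacle is bookkeeping rather than conceptual: one must check that at every iteration step the new source terms coming from $b_i w_i$, from the higher-order Taylor remainder in $\bar Q_i$, from the metric expansion inside $Q_i$, and from the $\varepsilon_{1,i}\alpha$ and $\varepsilon_{2,i}\beta$ terms all remain $O(\delta_i^2(1+|y|)^{2-n})$ or better, uniformly on $|y|\le R/\delta_i$ — in particular using $\varepsilon_{2,i}\le C\delta_i$ from Lemma~\ref{lem:taui} and $\varepsilon_{1,i}\le\bar\varepsilon$ at each pass — and that the boundary contribution on $\partial^+ B^+_{R/\delta_i}$ stays negligible via $v_i\le CU$. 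Since this is entirely parallel to \cite[Proposition 6.1]{Al}, with the only new feature being the harmless lower-order perturbations $\varepsilon_{1,i}\alpha$ and $\varepsilon_{2,i}\beta$ already controlled above, I would present the first iteration in detail and refer to \cite{Al,GMdcds} for the remaining steps.
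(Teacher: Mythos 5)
Your overall skeleton (use Lemma \ref{lem:taui} to turn the bound of Lemma \ref{lem:coreLemma} into $|v_i-U-\delta_i\gamma_{x_i}|\le C\delta_i^2$, then run the Green-representation estimate with \eqref{eq:ALstimagreen}, iterate to absorb the $b_iw_i$ tail, and finish with rescaled elliptic estimates for the derivative and $y_n\partial_n$ bounds) is exactly the route the paper has in mind, namely the iteration scheme of \cite[Proposition 6.1]{Al}. But one of your steps is genuinely wrong: the claim that the $C^2_{\mathrm{loc}}$ limit $w$ of $w_i=\delta_i^{-2}(v_i-U-\delta_i\gamma_{x_i})$ solves the \emph{homogeneous} linearized problem \eqref{eq:diff-w} and is therefore killed by the classification of \cite[Lemma 2]{Al} together with \eqref{eq:dervq}. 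After dividing by $\delta_i^2$ the sources do \emph{not} vanish: $\gamma_{x_i}$ only cancels the $O(\delta_i)$ part of $(L_{\hat g_i}-\Delta)U$, so $Q_i$ is of size exactly $O((1+|y|)^{2-n})$ with a nonzero limit coming from the second-order terms of the metric expansion ($\bar R_{ikjl}$, $R_{injn}$, $h_{ik}h_{kj}$, $\partial_kh_{ij}$), and likewise $\bar Q_i$ keeps the quadratic Taylor remainder of $(U+\delta_i\gamma_{x_i})^{n/(n-2)}$; moreover $\varepsilon_{1,i}\alpha v_i$ and $(\varepsilon_{2,i}/\delta_i)\beta v_i$ are merely bounded (only $\varepsilon_{2,i}\to0$ is known, not $\varepsilon_{1,i}\to0$, and Lemma \ref{lem:taui} gives $\varepsilon_{2,i}/\delta_i\le C$, not $o(1)$). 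Hence $w$ solves a nonhomogeneous problem and is in general nonzero — indeed it is precisely the genuine second-order correction to the bubble — so the conclusion $w_i\to0$ in $C^2_{\mathrm{loc}}$, which you invoke to ``remove the concentrated part,'' is false. This is different from Lemma \ref{lem:coreLemma}, where the contradiction hypothesis \eqref{eq:ipass} made the rescaled sources vanish.

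The good news is that this step is also unnecessary: the weighted bound $|w_i(y)|\le C(1+|y|)^{4-n}$ follows from the Green representation alone. Note, however, that your first pass does not give $(1+|y|)^{4-n}$ as you state: with only $|w_i|\le C$ the boundary term $\int_{\partial'}|\bar\xi-y|^{2-n}b_i w_i$, with $b_i$ as in \eqref{eq:b2}, produces only $(1+|y|)^{-1}$, while all the other terms (the volume source, the $\bar Q_i$ and $\varepsilon$-terms on $\partial'$ after using $\varepsilon_{2,i}\le C\delta_i$, and the $\partial^+$ contribution, which is a constant of size $O(\delta_i^{n-4})$ and must be compared with $(1+|y|)^{4-n}$ using $|y|\le R/(2\delta_i)$) already have the right decay. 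The correct argument is then to feed the decay of $w_i$ back only into the $b_iw_i$ term and repeat: each pass gains one power, and after finitely many iterations (a number depending only on $n$, watching the admissibility condition $l<\eta<m$ in \eqref{eq:ALstimagreen} at the last step) the tail is absorbed into $(1+|y|)^{4-n}$. With that replacement — drop the classification/vanishing claim entirely and state the iteration as the mechanism — your proof matches the intended one, and your final paragraph on the Schauder-type derivative estimates and the $y_n\partial_n$ bound is fine.
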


\subsection{Sign estimates of Pohozaev identity terms\label{sec:Sign-estimates}}

We estimate now $P(u_{i},r)$, where $\left\{ u_{i}\right\} _{i}$
is a family of solutions of (\ref{eq:Prob-i}) which has an isolated
simple blow up point $x_{i}\rightarrow x_{0}$. This estimate, contained
in Proposition \ref{prop:segno}, is a crucial point when proving
the vanishing of the second fundamental form at an isolated simple
blow up point.

The leading term of $P(u_{i},r)$ will be $-\int_{B_{r/\delta_{i}}^{+}}\left(y^{b}\partial_{b}u+\frac{n-2}{2}u\right)\left[(L_{\hat{g}_{i}}-\Delta)v\right]dy$,
so we set
\begin{equation}
R(u,v)=-\int_{B_{r/\delta_{i}}^{+}}\left(y^{b}\partial_{b}u+\frac{n-2}{2}u\right)\left[(L_{\hat{g}_{i}}-\Delta)v\right]dy,\label{eq:Ruv}
\end{equation}
and we recall the following result by Almaraz \cite[Propositions 5.2 and 7.1]{Al}.
\begin{lem}
\label{lem:R(U,U)}For $n\ge7$ we have 
\begin{align*}
R(U+\delta^{2}\gamma_{q},U+\delta^{2}\gamma_{q})= & \delta^{2}\frac{(n-6)\omega_{n-2}I_{n}^{n}}{(n-1)(n-2)(n-3)(n-4)}\left[\|\pi\|^{2}\right]\\
 & -\frac{1}{2}\delta^{2}\int_{\mathbb{R}_{+}^{n}}\gamma_{q}\Delta\gamma_{q}dy+o(\delta^{2})
\end{align*}
where $I_{n}^{n}:=:=\int_{0}^{\infty}\frac{s^{n}ds}{\left(1+s^{2}\right)^{n}}$.
\end{lem}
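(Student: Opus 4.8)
The plan is to reproduce the computation of \cite[Propositions 5.2 and 7.1]{Al} (compare also \cite[Section 5]{GMdcds}). I would work in the rescaled Fermi coordinates of (\ref{eq:deltai}), where the relevant metric is $\hat{g}:=g(\delta y)$, and first expand the operator. Writing $L_{\hat{g}}-\Delta=(\Delta_{\hat{g}}-\Delta)-\tfrac{n-2}{4(n-1)}R_{\hat{g}}$ and inserting (\ref{eq:|g|-1})--(\ref{eq:gin}) together with $R_{\hat{g}}(y)=\delta^{2}R_{g}(0)+O(\delta^{3}|y|)$, one obtains $L_{\hat{g}}-\Delta=\delta\,\mathcal{L}_{1}+\delta^{2}\mathcal{L}_{2}+O(\delta^{3})$, with $\mathcal{L}_{1}=2h_{ij}(0)y_{n}\partial^{2}_{ij}$ (all other first order terms drop since $g^{an}=\delta^{an}$ and $h_{in}=0$) and $\mathcal{L}_{2}$ gathering the quadratic coefficients of $g^{ij}$, the second order part of $|\hat{g}|^{1/2}$, and $-\tfrac{n-2}{4(n-1)}R_{g}(0)$. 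The structural identity that drives everything, read off from (\ref{eq:vqdef}), is $\mathcal{L}_{1}U=-\Delta\gamma_{q}$: this is exactly why the bubble is corrected by $\gamma_{q}$, and it pushes the first non-trivial contribution of $R$ to order $\delta^{2}$. Indeed, using $y^{b}\partial_{b}U+\tfrac{n-2}{2}U=j_{n}$ and the decay (\ref{eq:gradvq}) to trade $B^{+}_{r/\delta}$ for $\mathbb{R}^{n}_{+}$ modulo $o(\delta^{2})$, the $O(\delta)$ part of $R$ is $\delta\int_{\mathbb{R}^{n}_{+}}j_{n}\Delta\gamma_{q}\,dy$, which vanishes by Green's identity once one invokes $\Delta j_{n}=0$ and the matching Robin conditions $\partial_{t}\gamma_{q}+nU^{\frac{2}{n-2}}\gamma_{q}=0$ and $\partial_{t}j_{n}+nU^{\frac{2}{n-2}}j_{n}=0$ (equivalently (\ref{eq:Uvq}) and parity kill it).

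At order $\delta^{2}$ two packages are left. The metric one, $-\int_{\mathbb{R}^{n}_{+}}j_{n}\,(\mathcal{L}_{2}U)\,dy$, I would handle by substituting the explicit formulas (\ref{eq:jl})--(\ref{eq:jn}) for $U$, $\partial^{2}U$ and $j_{n}$, discarding every summand that is odd in some $y_{k}$ or that carries a factor $\sum_{i}h_{ii}(0)=0$ (the gauge condition $h_{g}\equiv0$), reducing each surviving angular integral to a product of the one-dimensional integrals $I_{m}^{\alpha}=\int_{0}^{\infty}s^{\alpha}(1+s^{2})^{-m}\,ds$, and finally collapsing the constants via $\int_{0}^{\infty}t^{k}(1+t)^{-m}\,dt=\tfrac{k!}{(m-1)\cdots(m-1-k)}$ and the recursions $I_{m}^{\alpha}=\tfrac{2m}{2m-\alpha-1}I_{m+1}^{\alpha}$, $I_{m}^{\alpha}=\tfrac{2m-\alpha-3}{\alpha+1}I_{m}^{\alpha+2}$ of \cite[Lemma 9.4]{Al}; the contractions of $\bar{R}_{ikjl}(0)$, $R_{injn}(0)$, $\mathrm{Ric}(0)$ and $R_{g}(0)$ produced in the process combine, by the contracted Gauss--Codazzi equations and the conformal covariance of $L_{g}$ in the gauge $h_{g}\equiv0$, into $\tfrac{(n-6)\,\omega_{n-2}\,I_{n}^{n}}{(n-1)(n-2)(n-3)(n-4)}\|\pi(q)\|^{2}$. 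The $\gamma_{q}$ package I would treat by integrating by parts, using that $\mathcal{L}_{1}$ is formally self-adjoint up to boundary terms (again because $h_{in}=0$), the identity $\mathcal{L}_{1}U=-\Delta\gamma_{q}$, and the vanishing/decay properties (\ref{eq:gradvq}), (\ref{eq:Uvq}), (\ref{eq:dervq}); it telescopes down to $-\tfrac12\int_{\mathbb{R}^{n}_{+}}\gamma_{q}\Delta\gamma_{q}\,dy$. Summing the two packages, and controlling the remainder by (\ref{eq:gradvq}), yields the stated identity.

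The step I expect to be the main obstacle is the curvature bookkeeping inside the metric package: identifying which contractions of the ambient and boundary curvature tensors at $q$ genuinely survive the symmetrisation, and verifying that in the conformal gauge $h_{g}\equiv0$ everything except the $\|\pi(q)\|^{2}$ term cancels, with the many constants coming out of the $I_{m}^{\alpha}$-recursions reassembling into exactly $\tfrac{n-6}{(n-1)(n-2)(n-3)(n-4)}$. The factor $n-6$ is precisely what couples this identity to the assumption $n\ge7$: its positivity is what Proposition \ref{prop:segno} then uses to preclude isolated simple blow up when $\max_{\partial M}\{\alpha-\tfrac{n-6}{4(n-1)(n-2)^{2}}\|\pi\|^{2}\}<0$.
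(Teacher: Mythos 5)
First, a point of context: the paper does not prove this lemma at all --- it is explicitly recalled from Almaraz \cite[Propositions 5.2 and 7.1]{Al} --- so your plan of ``reproducing the computation of \cite{Al}'' is by construction the same approach as the one the paper relies on. The skeleton of your sketch is sound: in the gauge $h_{g}\equiv0$ the first-order piece of $L_{\hat g}-\Delta$ is indeed $\mathcal{L}_{1}=2h_{ij}(0)y_{n}\partial^{2}_{ij}$, the identity $\mathcal{L}_{1}U=-\Delta\gamma_{q}$ coming from (\ref{eq:vqdef}) is the correct structural mechanism, and the order-$\delta$ contribution does vanish (your Green-identity argument with the matching Robin conditions works; so does the more direct observation, used in the proof of Proposition \ref{prop:eps-i}, that $\int j_{n}\,y_{n}\partial^{2}_{kl}U\,dy\propto\delta_{kl}$ and $h_{kk}(0)=(n-1)h_{g}(0)=0$).

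As a proof, however, the proposal defers exactly the content of the lemma. (i) The coefficient $\tfrac{(n-6)\omega_{n-2}I_{n}^{n}}{(n-1)(n-2)(n-3)(n-4)}$ of $\|\pi\|^{2}$: all the quadratic metric terms ($\bar R_{ikjl}(0)$, $R_{injn}(0)$, $\partial_{k}h_{ij}(0)$, the second-order part of $|g|^{1/2}$, and the $R_{g}$-term of $L_{g}$) contribute at the same order $\delta^{2}$, none is negligible for decay reasons; what must be checked is that, after the angular reductions and the contracted Gauss equation $\bar R=R_{g}-2\,\mathrm{Ric}(0)-\|\pi\|^{2}$ (valid since $h_{g}=0$), the scalar-curvature-type contractions cancel and the numerical factor is exactly $(n-6)$ over that product. ``Conformal covariance of $L_{g}$'' is not the mechanism; this cancellation is the bulk of Almaraz's Proposition 5.2 and you only assert it (you flag it yourself as the obstacle). (ii) The $\gamma_{q}$-package: note a scaling point you pass over silently --- with the correction literally $\delta^{2}\gamma_{q}$ the cross terms are $O(\delta^{3})$ and no $\int\gamma_{q}\Delta\gamma_{q}$ can appear at order $\delta^{2}$; the displayed formula is consistent with the normalization used everywhere else in the paper ($v_{i}\approx U+\delta_{i}\gamma_{x_{i}}$, Lemma \ref{lem:coreLemma}, Proposition \ref{prop:stimawi}), where both cross terms $-\int j_{n}\mathcal{L}_{1}\gamma_{q}$ and $-\int\bigl(y\cdot\nabla\gamma_{q}+\tfrac{n-2}{2}\gamma_{q}\bigr)\mathcal{L}_{1}U$ enter at order $\delta^{2}$, and your computation implicitly uses that reading without saying so. Finally, ``it telescopes down to $-\tfrac12\int\gamma_{q}\Delta\gamma_{q}$'' is precisely the nontrivial bookkeeping: the two cross terms must be combined by hand (self-adjointness of $\mathcal{L}_{1}$, the equation for $\gamma_{q}$, Pohozaev-type integrations by parts with the Robin condition), and the factor $\tfrac12$ --- equivalently, how the $\gamma_{q}$-quadratic form is split against the explicit $\|\pi\|^{2}$ coefficient --- is fixed only by carrying this out. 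So the roadmap is the right one, and the same as the cited source, but the two quantitative steps that constitute the lemma are asserted rather than proved.
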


\begin{prop}
\label{prop:segno}Let $x_{i}\rightarrow x_{0}$ be an isolated simple
blow-up point for $u_{i}$ solutions of (\ref{eq:Prob-i}). Let $\beta<0$
and $n\ge7$. Fixed $r$, we have, for $i$ large 
\begin{align*}
\hat{P}(u_{i},r)\ge & \delta_{i}^{2}\frac{(n-6)\omega_{n-2}I_{n}^{n}}{(n-1)(n-2)(n-3)(n-4)}\left[\|\pi\|^{2}\right]\\
 & -\varepsilon_{1,i}\delta_{i}^{2}\frac{4(n-2)I_{n}^{n}\omega_{n-2}}{(n-3)(n-4)}\alpha(x_{i})+o(\delta_{i}^{2}).
\end{align*}
\end{prop}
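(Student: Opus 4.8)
The strategy is to expand $\hat{P}(u_i,r)$ term by term, exploiting the fine blow-up expansion $v_i = U + \delta_i\gamma_{x_i} + O(\delta_i^2(1+|y|)^{4-n})$ provided by Proposition \ref{prop:stimawi}, and then to compare the result with the algebraic identities already recorded in Lemma \ref{lem:R(U,U)}. Recall from the definition that $\hat{P}(u_i,r) = I_1(u_i,r) + I_2(u_i,r) + I_3(u_i,r)$ after rescaling, where $I_1$ is the ``metric'' contribution $-\int_{B_r^+}(y^a\partial_a u_i + \tfrac{n-2}{2}u_i)[(L_g-\Delta)u_i]\,dy$, $I_2$ is the $\varepsilon_{1,i}\alpha$ term and $I_3$ is the $\varepsilon_{2,i}\beta$ boundary term.

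First I would handle $I_1$. After the change of variables $y \mapsto \delta_i y$ this becomes, up to $o(\delta_i^2)$, exactly $R(v_i,v_i)$ in the notation of \eqref{eq:Ruv}. Substituting the expansion $v_i = U + \delta_i\gamma_{x_i} + (\text{error})$ and using the pointwise decay bounds \eqref{eq:gradvq} for $\gamma_{x_i}$ together with the decay of the error term from Proposition \ref{prop:stimawi}, the cross terms and error terms are $o(\delta_i^2)$, so $I_1 = R(U+\delta_i\gamma_{x_i},\,U+\delta_i\gamma_{x_i}) + o(\delta_i^2)$ (here the relevant scaling matches that of Lemma \ref{lem:R(U,U)} after renaming $\delta^2 \leftrightarrow \delta_i$, i.e. one uses the version of the estimate adapted to the $U+\delta_i\gamma$ normalization used in Lemma \ref{lem:coreLemma}). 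By Lemma \ref{lem:R(U,U)} this equals
\[
\delta_i^2\,\frac{(n-6)\omega_{n-2}I_n^n}{(n-1)(n-2)(n-3)(n-4)}\|\pi(x_i)\|^2 \;-\;\tfrac12\delta_i^2\int_{\mathbb{R}_+^n}\gamma_{x_i}\Delta\gamma_{x_i}\,dy + o(\delta_i^2),
\]
and the second integral is $\le 0$ by \eqref{new}, so it only helps the inequality and may be dropped.

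Next, $I_2$ has already been computed in the proof of Proposition \ref{prop:eps-i}: by \eqref{eq:poho5},
\[
I_2(u_i,r) = -\frac{4(n-2)I_n^n\omega_{n-2}}{(n-3)(n-4)}\,\varepsilon_{1,i}\delta_i^2\,\alpha(x_i) + o(\delta_i^2),
\]
which is exactly the $\varepsilon_{1,i}$ term claimed in the statement. Finally, for $I_3$: since $\beta<0$, the integrand $(\bar y^k\partial_k v_i + \tfrac{n-2}{2}v_i)\beta v_i$ converges (after rescaling) to $(\bar y^k\partial_k U + \tfrac{n-2}{2}U)\beta(x_0)U$, and one checks via \eqref{eq:jn} that $\int_{\mathbb{R}^{n-1}}(\bar y^k\partial_k U + \tfrac{n-2}{2}U)U\,d\bar y = \int_{\mathbb{R}^{n-1}} j_n U\,d\bar y$ is proportional to $\int_{\mathbb{R}^{n-1}}\frac{1-|\bar y|^2}{(1+|\bar y|^2)^{n-1}}d\bar y \ge 0$ with a positive constant, and $\beta(x_0)<0$; hence $I_3(u_i,r) = \varepsilon_{2,i}\delta_i(B+o(1))$ with $B>0$ exactly as in \eqref{eq:poho4}, so $I_3 \ge 0$ for $i$ large (up to the $o$-term) and may again be discarded from the lower bound. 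Collecting $I_1+I_2+I_3$ yields the asserted inequality.

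The main obstacle is the first step: rigorously showing that $I_1$ (in the manifold metric $g_i$, with the full third-order expansion \eqref{eq:|g|-1}--\eqref{eq:gin}) reduces, up to $o(\delta_i^2)$, to $R(U+\delta_i\gamma_{x_i},U+\delta_i\gamma_{x_i})$. This requires carefully tracking which terms in $(L_{\hat g_i}-\Delta)v_i$ are genuinely of size $\delta_i^2$ versus which are killed by the orthogonality and symmetry properties of $\gamma_{x_i}$ (in particular \eqref{eq:Uvq}, \eqref{eq:dervq}, and the defining equation \eqref{eq:vqdef}), and controlling the tail contributions from the region $|y|$ large using the decay rates in \eqref{eq:gradvq} and Proposition \ref{prop:stimawi}; the odd-in-$\bar y$ terms must be seen to integrate to zero. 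Since this computation is essentially that of \cite[Propositions 5.2 and 7.1]{Al} combined with the extra $\varepsilon_{1,i}$ and $\varepsilon_{2,i}$ contributions handled above, I would cite those propositions for the bulk of the estimate and only indicate the new points, namely that the $\varepsilon_{1,i}$-term contributes at order $\varepsilon_{1,i}\delta_i^2$ and the $\varepsilon_{2,i}$-term has a favorable sign, so that both can be incorporated without disturbing the leading $\|\pi(x_i)\|^2$ term.
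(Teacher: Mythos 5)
Your proposal is correct and follows essentially the same route as the paper: reduce $I_1$ to $R(U+\delta_i\gamma_{x_i},U+\delta_i\gamma_{x_i})$ via Proposition \ref{prop:stimawi} and apply Lemma \ref{lem:R(U,U)}, discarding the nonpositive term $\int_{\mathbb{R}_+^n}\gamma_q\Delta\gamma_q\,dy$; import the $\varepsilon_{1,i}$ expansion from (\ref{eq:poho5}); and drop the $\varepsilon_{2,i}$ boundary term because it is positive. One small slip worth fixing: the integral $\int_{\mathbb{R}^{n-1}}\frac{1-|\bar y|^2}{(1+|\bar y|^2)^{n-1}}\,d\bar y$ is in fact negative (it equals $-\frac{2}{n-1}\omega_{n-2}I_{n-1}^{n}$), not nonnegative as you assert, and it is precisely this negativity combined with $\beta<0$ that yields $B>0$ in (\ref{eq:eq:avj}) and hence $I_3\ge 0$; your conclusion stands, but the parenthetical justification as written would give the opposite sign.
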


\begin{proof}
We remind that the definition of $\hat{P}$ is given in Theorem \ref{thm:poho}
and we take $v_{i}(y)$ as in (\ref{eq:deltai}). By Proposition \ref{prop:stimawi}
and by (\ref{eq:gradvq}) of Lemma \ref{lem:vq}, for $|y|<R/\delta_{i}$
we have
\[
\left|v_{i}(y)-U(y)\right|=O(\delta_{i}^{2}(1+|y|^{4-n})+O(\delta_{i}(1+|y|^{3-n})=O(\delta_{i}(1+|y|^{3-n})
\]
\[
\left|y_{a}\partial_{a}v_{i}(y)-y_{a}\partial_{a}U(y)\right|=O(\delta_{i}^{2}(1+|y|^{4-n})+O(\delta_{i}(1+|y|^{3-n})=O(\delta_{i}(1+|y|^{3-n}),
\]
so, recalling (\ref{eq:poho5}) we have
\[
\int\limits _{B_{r}^{+}}\left(y^{a}\partial_{a}u_{i}+\frac{n-2}{2}u_{i}\right)\varepsilon_{1,i}\alpha_{i}u_{i}dy=-\frac{4(n-2)I_{n}^{n}\omega_{n-2}}{(n-3)(n-4)}\varepsilon_{1,i}\delta_{i}^{2}\alpha(x_{i})+o(\delta_{i}^{2}).
\]
Analogously we obtain
\begin{multline*}
\int\limits _{\partial'B_{r}^{+}}\left(\bar{y}^{k}\partial_{k}u_{i}+\frac{n-2}{2}u_{i}\right)\varepsilon_{2,i}\beta_{i}u_{i}d\bar{y}\\
=\varepsilon_{2,i}\delta_{i}\frac{n-2}{2}\beta(x_{i})\int\limits _{\mathbb{R}^{n-1}}\frac{1-|\bar{y}|^{2}}{\left[1+|\bar{y}|^{2}\right]^{n-1}}d\bar{y}+\varepsilon_{2,i}\delta_{i}O(\delta_{i}^{2})>0.
\end{multline*}
So, for $i$ sufficiently large we obtain 
\begin{align*}
\hat{P}(u_{i},r)\ge & -\int_{B_{r/\delta_{i}}^{+}}\left(y^{b}\partial_{b}v_{i}+\frac{n-2}{2}v_{i}\right)\left[(L_{\hat{g}_{i}}-\Delta)v_{i}\right]dy\\
 & -\frac{4(n-2)I_{n}^{n}\omega_{n-2}}{(n-3)(n-4)}\varepsilon_{1,i}\delta_{i}^{2}\alpha(x_{i})+o(\delta_{i}^{2}).
\end{align*}
Then, by the estimates on $v_{i}$ obtained in the previous section,
using Lemma \ref{lem:R(U,U)}, and recalling that, by inequality (\ref{new}),
$\int_{\mathbb{R}_{+}^{n}}\gamma_{q}\Delta\gamma_{q}dy\le0$, we get
\begin{align*}
\hat{P}(u_{i},r)\ge & R(U+\delta^{2}\gamma_{q},U+\delta^{2}\gamma_{q})\\
 & -\frac{4(n-2)I_{n}^{n}\omega_{n-2}}{(n-3)(n-4)}\varepsilon_{1,i}\delta_{i}^{2}\alpha(x_{i})+o(\delta_{i}^{2})\\
\ge & \delta_{i}^{2}\frac{(n-6)\omega_{n-2}I_{n}^{n}}{(n-1)(n-2)(n-3)(n-4)}\left[|h_{kl}(x_{i})|^{2}\right]\\
 & -\frac{4(n-2)I_{n}^{n}\omega_{n-2}}{(n-3)(n-4)}\varepsilon_{1,i}\delta_{i}^{2}\alpha(x_{i})+o(\delta_{i}^{2})
\end{align*}
which gives the proof.
\end{proof}
\begin{prop}
\label{prop:7.1}Assume $n\ge7$, $0\le\varepsilon_{1,i},\varepsilon_{2,1}\le\bar{\varepsilon}<1$,
$\beta<0$ and 
\[
{\displaystyle \max_{q\in\partial M}\left\{ \alpha(q)-\frac{n-6}{4(n-1)(n-2)^{2}}\|\pi(q)\|^{2}\right\} <0}.
\]
Let $x_{i}\rightarrow x_{0}$ be an isolated simple blow-up point
for $u_{i}$ solutions of (\ref{eq:Prob-i}). Then
\begin{enumerate}
\item For $i$ large, $\hat{P}(u_{i},r)\ge\delta_{i}^{2}C_{1}\left[\|\pi(x_{i})\|^{2}\right]+o(\delta_{i}^{2})$
for some $C_{1}>0$;
\item $\|\pi(x_{0})\|=0.$
\end{enumerate}
\end{prop}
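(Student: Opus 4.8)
The plan is to deduce both items directly from Proposition~\ref{prop:segno} together with the Pohozaev identity and the curvature hypothesis. For item~(1), recall that Proposition~\ref{prop:segno} gives, for $i$ large,
\[
\hat{P}(u_{i},r)\ge\delta_{i}^{2}\frac{(n-6)\omega_{n-2}I_{n}^{n}}{(n-1)(n-2)(n-3)(n-4)}\|\pi(x_{i})\|^{2}-\varepsilon_{1,i}\delta_{i}^{2}\frac{4(n-2)I_{n}^{n}\omega_{n-2}}{(n-3)(n-4)}\alpha(x_{i})+o(\delta_{i}^{2}).
\]
First I would factor out the common positive constant $\frac{\omega_{n-2}I_{n}^{n}}{(n-3)(n-4)}$ and write the bracket as a multiple of $\alpha(x_{i})-\frac{n-6}{4(n-1)(n-2)^{2}}\|\pi(x_{i})\|^{2}$ plus a leftover multiple of $\|\pi(x_{i})\|^{2}$. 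Concretely, since $\varepsilon_{1,i}\le\bar\varepsilon<1$ and the hypothesis forces $\alpha(x_{i})<\frac{n-6}{4(n-1)(n-2)^{2}}\|\pi(x_{i})\|^{2}$ at every boundary point, the term $-\varepsilon_{1,i}\alpha(x_{i})$ is bounded below by $-\bar\varepsilon\,\frac{n-6}{4(n-1)(n-2)^{2}}\|\pi(x_{i})\|^{2}$ when $\alpha(x_i)\ge 0$, and is nonnegative when $\alpha(x_i)<0$; in either case one obtains
\[
\hat{P}(u_{i},r)\ge\delta_{i}^{2}\,\frac{\omega_{n-2}I_{n}^{n}}{(n-3)(n-4)}(1-\bar\varepsilon)\frac{n-6}{(n-1)(n-2)^{2}}\|\pi(x_{i})\|^{2}+o(\delta_{i}^{2}),
\]
so item~(1) holds with $C_{1}=(1-\bar\varepsilon)\frac{(n-6)\omega_{n-2}I_{n}^{n}}{(n-1)(n-2)^{2}(n-3)(n-4)}>0$.

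For item~(2) I would argue by contradiction: suppose $\|\pi(x_{0})\|>0$. By continuity $\|\pi(x_{i})\|\ge c>0$ for $i$ large, so item~(1) yields $\hat{P}(u_{i},r)\ge C_{1}c^{2}\delta_{i}^{2}/2>0$ for $i$ large. On the other hand, by the Pohozaev identity (Theorem~\ref{thm:poho}) $\hat{P}(u_{i},r)=P(u_{i},r)$, and by Proposition~\ref{prop:Lemma 4.4} (the estimate used to derive \eqref{eq:poho1} in the proof of Proposition~\ref{prop:eps-i}) one has $P(u_{i},r)\le\delta_{i}^{\lambda_{i}(n-2)}$, where $\lambda_{i}(n-2)=\frac{2(n-2-\eta)}{1}-\,(n-2)>2$ for $\eta$ small. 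Hence $0<C_{1}c^{2}\delta_{i}^{2}/2\le\delta_{i}^{\lambda_{i}(n-2)}=o(\delta_{i}^{2})$, which is absurd. Therefore $\|\pi(x_{0})\|=0$.

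I do not expect any real obstacle here: the statement is essentially a bookkeeping consequence of Proposition~\ref{prop:segno} and the Pohozaev upper bound. The one point that needs a small amount of care is the sign analysis producing $C_{1}$: one must check that the hypothesis ${\displaystyle \max_{q\in\partial M}}\{\alpha(q)-\frac{n-6}{4(n-1)(n-2)^{2}}\|\pi(q)\|^{2}\}<0$ is used \emph{uniformly} in $q$, so that the negative contribution of $-\varepsilon_{1,i}\alpha(x_i)$ can always be absorbed into a fraction $(1-\bar\varepsilon)$ of the $\|\pi\|^{2}$ term, with the absorption working at the single point $x_i$ where everything is evaluated. Once the constant $C_{1}$ is pinned down, item~(2) is immediate from the contradiction with the decay rate $\lambda_{i}(n-2)-2>0$ established in Proposition~\ref{prop:Lemma 4.4}.
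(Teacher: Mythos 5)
Your argument is correct and follows essentially the paper's own route: combine the lower bound of Proposition \ref{prop:segno}, absorbing the $\varepsilon_{1,i}\alpha(x_i)$ term into the $\|\pi(x_i)\|^{2}$ term via the hypothesis on $\alpha$ together with $\varepsilon_{1,i}\le\bar\varepsilon<1$ (your sign case analysis just makes explicit what the paper states as ``recalling the assumption on $\alpha$ and that $\varepsilon_{1,i}<1$''), and then compare with the Pohozaev upper bound $P(u_i,r)=\hat P(u_i,r)=o(\delta_i^{2})$ coming from Theorem \ref{thm:poho} and Proposition \ref{prop:Lemma 4.4}. The only blemish is an extra factor of $(n-2)$ in the denominator of your explicit $C_{1}$ (the absorption yields $(n-2)$, not $(n-2)^{2}$), which is harmless since the proposition only requires some positive constant and a smaller one still gives a valid lower bound.
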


\begin{proof}
By Proposition \ref{prop:eps-i} and Proposition \ref{prop:Lemma 4.4}
we have
\[
P(u_{i},r)\le C\delta_{i}^{n-2}.
\]
On the other hand recalling Proposition \ref{prop:segno}, Theorem
\ref{thm:poho}, the assumption on $\alpha$, and that $\varepsilon_{1,i}<1$,
we have 

\[
P(u_{i},r)=\hat{P}(u_{i},r)\ge\delta_{i}^{2}C_{1}\left[\|\pi(x_{i})\|^{2}\right]+o(\delta_{i}^{2}),
\]
with $C_{1}>0$. In addition, we get $\|\pi(x_{i})\|^{2}\le\delta_{i}^{n-4}$,
which gives the proof. 
\end{proof}
Once we have the result of Proposition \ref{prop:segno}, with strategy
similar to \ref{prop:7.1}, we can prove the following Proposition.
For a detailed proof we refer to \cite[Proposition 8.1]{Al}.
\begin{prop}
\label{prop:isolato->semplice}Let $x_{i}\rightarrow x_{0}$ be an
isolated blow up point for $u_{i}$ solutions of (\ref{eq:Prob-i}).
Assume $n\ge7$, $0\le\varepsilon_{1,i},\varepsilon_{2,1}\le\bar{\varepsilon}<1$,
$\beta<0$, ${\displaystyle \max_{q\in\partial M}\left\{ \alpha(q)-\frac{n-6}{4(n-1)(n-2)^{2}}\|\pi(q)\|^{2}\right\} <0}$
and $\|\pi(x_{0})\|\neq0$. Then $x_{0}$ is isolated simple. 
\end{prop}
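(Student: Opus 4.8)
The plan is to argue by contradiction, following the classical "isolated implies isolated simple" strategy of Schoen, Li--Zhang, Han--Li, and in the boundary setting Almaraz. Suppose $x_i \to x_0$ is isolated but not isolated simple. By the definition of isolated simple blow up and the standard analysis of the spherical average $w_i(r) = r^{(n-2)/2}\bar u_i(r)$, one first establishes that near an isolated blow up point $w_i$ has a first critical point at some radius, and that if it is not isolated simple there must be a second critical point $r_i$ with $r_i \to 0$ (after rescaling by $\delta_i$, the first critical point stabilizes at the one of the standard bubble). Set $\tilde u_i(y) := R_i^{(n-2)/2}\, u_i(x_i) \cdot u_i\big(\psi_i(R_i y)\big)$ where $R_i$ is the location of the second critical point of $w_i$; by Proposition \ref{prop:4.1} type arguments together with the Harnack inequality and Proposition \ref{prop:4.3}, one shows $\tilde u_i$ converges in $C^2_{\mathrm{loc}}(\mathbb{R}^n_+ \setminus\{0\})$ to a nonnegative function $\tilde u$ which is a solution of the linear problem $L_{g_0}\tilde u = 0$ in a punctured half-ball with the homogeneous boundary condition, singular at the origin, and whose spherical average $r^{(n-2)/2}\overline{\tilde u}(r)$ has a critical point at $r = 1$. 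A removable-singularity / classification argument (as in \cite[Lemma 2]{Al}) forces $\tilde u(y) = a\,|y|^{2-n} + b(y)$ with $b$ harmonic and $a > 0$; the existence of the interior critical point of the average then pins down the structure and, crucially, gives a negative sign for a certain boundary integral.

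Second, and this is the heart of the matter, I would apply the Pohozaev identity of Theorem \ref{thm:poho} on a ball of small fixed radius $r$ around $x_i$ and compare the two sides. The boundary term $P(u_i, r)$, evaluated on $\tilde u$ after rescaling, converges to a strictly \emph{negative} constant times $a^2$ — this is the classical mechanism by which the second critical point produces a definite negative contribution (the cross term between the $|y|^{2-n}$ singular part and the regular harmonic part of $\tilde u$). On the other hand, $\hat P(u_i,r) = P(u_i,r)$, and by the lower bound just proved in Proposition \ref{prop:segno} together with the sign assumption ${\displaystyle \max_{q\in\partial M}\{\alpha(q) - \frac{n-6}{4(n-1)(n-2)^2}\|\pi(q)\|^2\} < 0}$ and $\varepsilon_{1,i} < 1$, we get (exactly as in Proposition \ref{prop:7.1})
\[
\hat P(u_i, r) \ge \delta_i^2\, C_1\, \|\pi(x_i)\|^2 + o(\delta_i^2)
\]
with $C_1 > 0$. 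Since $\|\pi(x_0)\| \neq 0$ by hypothesis, $\|\pi(x_i)\|^2$ is bounded below by a positive constant for $i$ large, so $\hat P(u_i,r) \ge c\,\delta_i^2 > 0$ eventually — contradicting the negative sign of $P(u_i,r)$ coming from the non-simple blow up. This contradiction shows $x_0$ must in fact be isolated simple.

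The step I expect to be the main obstacle is the careful bookkeeping that makes the rescaled-solution convergence at scale $R_i$ (the second critical radius) rigorous: one has to rule out a sign change or vanishing of $\tilde u$, control the behavior of the average across the annulus $r_i < |y| < \rho$ using Proposition \ref{prop:4.3}(2) (the Green's function lower bound), and verify that the limiting singular solution has exactly the form needed for the Pohozaev term to have the asserted strict negative sign. This is where the non-umbilicity plays no role — it is a purely linear/potential-theoretic argument — but it is the most delicate part and the reason the proof is deferred to \cite[Proposition 8.1]{Al}; the matching of scales $\delta_i$, $R_i$, and the fixed radius $r$, and the interplay with the perturbation terms $\varepsilon_{1,i}\alpha u_i$ and $\varepsilon_{2,i}\beta u_i$ (which are harmless here precisely because $\varepsilon_{2,i} \to 0$ by Proposition \ref{prop:eps-i} and $\varepsilon_{1,i} < 1$ with the sign condition on $\alpha$), require attention. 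Once the negative Pohozaev sign from the non-simple blow up is in hand, the clash with Proposition \ref{prop:segno} is immediate and closes the argument.
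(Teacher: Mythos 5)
Your first paragraph correctly reproduces the skeleton of the argument the paper defers to (Almaraz's Proposition 8.1): assume the point is isolated but not simple, rescale at the second critical radius of $w_i$, obtain a limit $h(y)=a|y|^{2-n}+b(y)$ with $b$ harmonic satisfying the homogeneous Neumann condition, and use the criticality at $r=1$ to force $b(0)>0$, which makes $P(h,s)$ strictly negative for small $s$ (the negative quantity is a constant times the cross term $a\,b(0)$, not $a^{2}$ as you write). The gap is in the way you close the contradiction. You invoke Proposition \ref{prop:segno} (and the bound of Proposition \ref{prop:7.1}) for the original sequence $u_i$ at the point $x_i$ and at a fixed radius $r$; but those results are stated, and proved, only for \emph{isolated simple} blow-up points, and in the contradiction hypothesis $x_i\rightarrow x_0$ is precisely assumed not to be simple, so this use is circular. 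None of the estimates feeding into Proposition \ref{prop:segno} (Propositions \ref{prop:Lemma 4.4}, \ref{prop:4.3}, \ref{prop:stimawi}) are available for $u_i$ at the fixed scale $r$ in this situation.

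The correct comparison must be run entirely at the rescaled scale: one applies the Pohozaev identity and the sign estimates to the rescaled sequence $\tilde u_i$, for which $0$ \emph{is} isolated simple. There two further points, which your sketch glosses over, matter. First, the trace-free second fundamental form of the rescaled metric at $0$ is $R_i\,\pi(x_i)\rightarrow 0$, so the hypothesis $\|\pi(x_0)\|\neq0$ does \emph{not} give a lower bound of the form $c\,\delta_i^{2}>0$ after rescaling; what survives is only that the sign condition $\varepsilon_{1}\alpha-\frac{n-6}{4(n-1)(n-2)^{2}}\|\pi\|^{2}<0$ (together with $\beta<0$ and $\varepsilon_{1,i}\le1$) is invariant under the rescaling, because both $\alpha$ and $\|\pi\|^{2}$ pick up the same factor $R_i^{2}$, and this yields $\liminf_i \tilde u_i(0)^{2}\,P(\tilde u_i,s)\ge -\epsilon(s)$ with $\epsilon(s)\rightarrow0$ as $s\rightarrow0$. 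Second, the strictly negative quantity is $\lim_{s\rightarrow0}\lim_i \tilde u_i(0)^{2}P(\tilde u_i,s)=\lim_{s\rightarrow 0}P(h,s)<0$, a statement about the blow-up scale $R_i$, which cannot be played off against a lower bound for $P(u_i,r)$ at a fixed radius $r$ as in your write-up; the two quantities live at different scales and orders of magnitude. So while your overall route is the same as the paper's (via \cite[Proposition 8.1]{Al} combined with the sign information of Proposition \ref{prop:segno}), the step you yourself identify as ``the heart of the matter'' is mis-stated and, as written, would not produce a contradiction.
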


\subsection{Proof of Theorem \ref{thm:main}\label{sec:Main-Proof}}

Using what we have obtained throughout this section, we can now prove
the compactness result.
\begin{proof}[Proof of Theorem \ref{thm:main}]
. By contradiction, suppose that $x_{i}\rightarrow x_{0}$ is a blowup
point for $u_{i}$ solutions of (\ref{eq:Prob-2}). Let $q_{1}^{i},\dots q_{N(u_{i})}^{i}$
the sequence of points given by Proposition \ref{prop:4.2}. By Claim
3 of Proposition \ref{prop:4.2} there exists a sequence of indices
$k_{i}\in1,\dots N$ such that $d_{\bar{g}}\left(x_{i},q_{k_{i}}^{i}\right)\rightarrow0$.
Up to relabeling, we say $k_{i}=1$ for all $i$. Then also $q_{1}^{i}\rightarrow x_{0}$
is a blow up point for $u_{i}$. By Proposition \ref{prop:4.2} and
Proposition \ref{prop:isolato->semplice} we have that $q_{1}^{i}\rightarrow x_{0}$
is an isolated simple blow up point for $u_{i}$. Then by Proposition
\ref{prop:7.1} we deduce that $\|\pi(x_{0})\|=0$, contradicting
the assumption of the theorem. This concludes the proof.
\end{proof}

\section{The non compactness result\label{sec:The-non-compactness} }

In this section we perform the Ljapunov-Schmidt finite dimensional
reduction, which relies on three steps. First, we start finding a
solution of the infinite dimensional problem (\ref{eq:Pibot}) with
a ansatz $u=W_{\delta,q}+\delta V_{\delta,q}+\phi$ where $W_{\delta,q}+\delta V_{\delta,q}$
is a model solution and $\phi=\phi_{\delta,q}$ is a small remainder.
Then, we study a finite dimensional reduced problem which depends
only on $\delta,q$. Finally, we give the proof of Theorem \ref{thm:main2}.

In the Ljapunov-Schmidt procedure, it will be necessary that $-L_{g}+\varepsilon_{1}\alpha$
is a positive definite operator. Since $-L_{g}$ is positive definite,
in the case $\alpha<0$, we choose $\varepsilon_{1}$ small enough
in order to ensure the positivity of $-L_{g}+\varepsilon_{1}\alpha$
.

\subsection{\label{subsec:The-finite-dimensional}The finite dimensional reduction}

Since $-L_{g}+\varepsilon_{1}\alpha$ is a positive definite operator,
we define an equivalent scalar product on $H^{1}$ as 
\begin{equation}
\left\langle \left\langle u,v\right\rangle \right\rangle _{g}=\int_{M}(\nabla_{g}u\nabla_{g}v+\frac{n-2}{4(n-1)}R_{g}uv+\varepsilon_{1}\alpha uv)d\mu_{g}\label{eq:prodscal}
\end{equation}
which leads to the norm $\|\cdot\|_{g}$ equivalent to the usual one. 

Given $1\le t\le\frac{2(n-1)}{n-2}$ we have the well known embedding
\[
i:H^{1}(M)\rightarrow L^{t}(\partial M),
\]
and we define, by the scalar product $\left\langle \left\langle \cdot,\cdot\right\rangle \right\rangle _{g}$,
\[
i_{\alpha}^{*}:L^{t'}(\partial M)\rightarrow H^{1}(M)
\]
in the following sense: given $f\in L^{\frac{2(n-1)}{n-2}}(\partial M)$
there exists a unique $v\in H^{1}(M)$ such that 
\begin{align}
v=i_{\alpha}^{*}(f) & \iff\left\langle \left\langle v,\varphi\right\rangle \right\rangle _{g}=\int_{\partial M}f\varphi d\sigma\text{ for all }\varphi\label{eq:istella}\\
 & \iff\left\{ \begin{array}{ccc}
-\Delta_{g}v+\frac{n-2}{4(n-1)}R_{g}v+\varepsilon_{1}\alpha=0 &  & \text{on }M;\\
\frac{\partial v}{\partial\nu}=f &  & \text{on \ensuremath{\partial}}M.
\end{array}\right.\nonumber 
\end{align}
At this point Problem (\ref{eq:Prob-2}) is equivalent to find $v\in H^{1}(M)$
such that 

\[
v=i_{\alpha}^{*}(f(v)-\varepsilon_{2}\beta v)
\]
where 
\[
f(v)=(n-2)\left(v^{+}\right)^{\frac{n}{n-2}}.
\]
Notice that, if $v\in H_{g}^{1}$, then $f(v)\in L^{\frac{2(n-1)}{n}}(\partial M)$. 

Also, problem (\ref{eq:Prob-2}) has a variational structure and a
positive solution for (\ref{eq:Prob-2}) is a critical point for the
following functional defined on $H^{1}(M)$ 
\begin{align*}
J_{\varepsilon_{1},\varepsilon_{2},g}(v)=J_{g}(v): & =\frac{1}{2}\int_{M}|\nabla_{g}v|^{2}+\frac{n-2}{4(n-1)}R_{g}v^{2}+\varepsilon_{1}\alpha v^{2}d\mu_{g}\\
 & +\frac{1}{2}\int_{\partial M}\varepsilon_{2}\beta v^{2}d\sigma_{g}-\frac{(n-2)^{2}}{2(n-1)}\int_{\partial M}\left(v^{+}\right)^{\frac{2(n-1)}{n-2}}d\sigma_{g}.
\end{align*}
We define a model solution of (\ref{eq:Prob-2}) by means of the standard
bubble $U$ and of the function $\gamma_{q}$ introduced in Lemma
\ref{lem:vq} 

Given $q\in\partial M$ and $\psi_{q}^{\partial}:\mathbb{R}_{+}^{n}\rightarrow M$
the Fermi coordinates in a neighborhood of $q$, we define 
\begin{align*}
W_{\delta,q}(\xi) & =U_{\delta}\left(\left(\psi_{q}^{\partial}\right)^{-1}(\xi)\right)\chi\left(\left(\psi_{q}^{\partial}\right)^{-1}(\xi)\right)=\\
 & =\frac{1}{\delta^{\frac{n-2}{2}}}U\left(\frac{y}{\delta}\right)\chi(y)=\frac{1}{\delta^{\frac{n-2}{2}}}U\left(x\right)\chi(\delta x)
\end{align*}
where $y=(z,t)$, with $z\in\mathbb{R}^{n-1}$ and $t\ge0$, $\delta x=y=\left(\psi_{q}^{\partial}\right)^{-1}(\xi)$
and $\chi$ is a radial cut off function, with support in ball centered
in $0$, of radius $R$. In an analogous way, we define 
\[
V_{\delta,q}(\xi)=\frac{1}{\delta^{\frac{n-2}{2}}}\gamma_{q}\left(\frac{1}{\delta}\left(\psi_{q}^{\partial}\right)^{-1}(\xi)\right)\chi\left(\left(\psi_{q}^{\partial}\right)^{-1}(\xi)\right).
\]
Finally, given $j_{a}$ defined in (\ref{eq:jl}) and (\ref{eq:jn})
we define 
\[
Z_{\delta,q}^{b}(\xi)=\frac{1}{\delta^{\frac{n-2}{2}}}j_{b}\left(\frac{1}{\delta}\left(\psi_{q}^{\partial}\right)^{-1}(\xi)\right)\chi\left(\left(\psi_{q}^{\partial}\right)^{-1}(\xi)\right).
\]
By means of $\left\langle \left\langle \cdot,\cdot\right\rangle \right\rangle _{g}$
it is possible to decompose $H^{1}$ in the direct sum of the following
two subspaces 
\begin{align*}
K_{\delta,q} & =\text{Span}\left\langle Z_{\delta,q}^{1},\dots,Z_{\delta,q}^{n}\right\rangle \\
K_{\delta,q}^{\bot} & =\left\{ \varphi\in H^{1}(M)\ :\ \left\langle \left\langle \varphi,Z_{\delta,q}^{b}\right\rangle \right\rangle _{g}=0,\ b=1,\dots,n\right\} 
\end{align*}
and to define the projections 
\[
\Pi=H^{1}(M)\rightarrow K_{\delta,q}\text{ and }\Pi^{\bot}=H^{1}(M)\rightarrow K_{\delta,q}^{\bot}.
\]
 As claimed before, we look for a solution $u_{q}$ of (\ref{eq:Prob-2})
having the form 
\[
u_{q}=W_{\delta,q}+\delta V_{\delta,q}+\phi
\]
where $\phi\in K_{\delta,q}^{\bot}$. Using $i_{\alpha}^{*}$, (\ref{eq:Prob-2})
is equivalent to the following pair of equations 
\begin{align}
\Pi\left\{ W_{\delta,q}+\delta V_{\delta,q}+\phi-i_{\alpha}^{*}\left[f(W_{\delta,q}+\delta V_{\delta,q}+\phi)-\varepsilon_{2}\beta(W_{\delta,q}+\delta V_{\delta,q}+\phi)\right]\right\}  & =0\label{eq:Pi}\\
\Pi^{\bot}\left\{ W_{\delta,q}+\delta V_{\delta,q}+\phi-i_{\alpha}^{*}\left[f(W_{\delta,q}+\delta V_{\delta,q}+\phi)-\varepsilon_{2}\beta(W_{\delta,q}+\delta V_{\delta,q}+\phi)\right]\right\}  & =0.\label{eq:Pibot}
\end{align}

Let us define the linear operator $L:K_{\delta,q}^{\bot}\rightarrow K_{\delta,q}^{\bot}$
as
\begin{equation}
L(\phi)=\Pi^{\bot}\left\{ \phi-i_{\alpha}^{*}\left(f'(W_{\delta,q}+\delta V_{\delta,q})[\phi]\right)\right\} ,\label{eq:defL}
\end{equation}
and a nonlinear term $N(\phi)$ and a remainder term R as 
\begin{align}
N(\phi)= & \Pi^{\bot}\left\{ i_{\alpha}^{*}\left(f(W_{\delta,q}+\delta V_{\delta,q}+\phi)-f(W_{\delta,q}+\delta V_{\delta,q})-f'(W_{\delta,q}+\delta V_{\delta,q})[\phi]\right)\right\} \label{eq:defN}\\
R= & \Pi^{\bot}\left\{ i_{\alpha}^{*}\left(f(W_{\delta,q}+\delta V_{\delta,q})\right)-W_{\delta,q}-\delta V_{\delta,q}\right\} ,\label{eq:defR}
\end{align}
With these operators, the infinite dimensional equation (\ref{eq:Pibot})
becomes

\[
L(\phi)=N(\phi)+R-\Pi^{\bot}\left\{ i_{\alpha}^{*}\left(\varepsilon_{2}\beta(W_{\delta,q}+\delta V_{\delta,q}+\phi)\right)\right\} .
\]
In this subsection we find, for any $\delta,q$ given, a function
$\phi$ which solves equation (\ref{eq:Pibot}). Many of the estimates
which follow are contained in \cite{GMP18}, which we refer to for
further details. Here we describe only the main steps of each proof.
\begin{lem}
\label{lem:R}Assume $n\ge7$. It holds 
\[
\|R\|_{g}=O(\delta^{2})
\]
\end{lem}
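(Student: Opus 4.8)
The plan is to estimate $R=\Pi^{\bot}\{i_{\alpha}^{*}(f(W_{\delta,q}+\delta V_{\delta,q}))-W_{\delta,q}-\delta V_{\delta,q}\}$ by duality. Since $\|\Pi^{\bot}\|\le 1$ and, by the definition (\ref{eq:istella}) of $i_{\alpha}^{*}$, the map $i_{\alpha}^{*}$ is continuous from $L^{\frac{2(n-1)}{n}}(\partial M)$ to $H^{1}(M)$, it suffices to test the quantity against an arbitrary $\varphi\in H^1(M)$ with $\|\varphi\|_g=1$ and show
\[
\left\langle\left\langle R,\varphi\right\rangle\right\rangle_g=\int_{\partial M}\big(f(W_{\delta,q}+\delta V_{\delta,q})\big)\varphi\,d\sigma_g-\left\langle\left\langle W_{\delta,q}+\delta V_{\delta,q},\varphi\right\rangle\right\rangle_g=O(\delta^2)\|\varphi\|_g.
\]
First I would pass to Fermi coordinates centered at $q$, rescale $y=\delta x$, and use the expansions (\ref{eq:|g|-1})--(\ref{eq:gin}) of the metric to convert the bilinear form $\langle\langle\cdot,\cdot\rangle\rangle_g$ and the boundary integral into their Euclidean counterparts on $B^+_{R/\delta}$ plus curvature error terms.

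The next step is the algebraic cancellation: by construction, $U$ solves (\ref{eq:Udelta}) and $\gamma_q$ solves (\ref{eq:vqdef}), so the Euclidean leading part of $-\Delta(W_{\delta,q}+\delta V_{\delta,q})$ in $M$ together with the Neumann condition on $\partial M$ matches $f(W_{\delta,q})=(n-2)W_{\delta,q}^{n/(n-2)}$ modulo (a) the Taylor remainder $f(W+\delta V)-f(W)-f'(W)[\delta V]$ on the boundary, (b) the defining right-hand side $2h_{ij}(0)t\,\partial_{ij}^2U$ of $\gamma_q$ which is exactly the first-order term $(L_{\hat g}-\Delta)$ produces, so it cancels, and (c) all genuinely second-order metric error terms from (\ref{eq:gij-1}), the scalar-curvature term $\frac{n-2}{4(n-1)}R_g$, the perturbation $\varepsilon_1\alpha$, and the cut-off commutator terms coming from $\chi$. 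I would then bound each of these contributions: the cut-off terms are supported where $|x|\sim R/\delta$ and, by the decay of $U$ and of $\gamma_q$ (estimate (\ref{eq:gradvq})), contribute $O(\delta^{n-2})$; the second-order metric and curvature terms contribute $O(\delta^2)$ after integrating the decaying profiles $U$, $\nabla U$, $\gamma_q$ against $\varphi$ and using Hölder together with $\|\varphi\|_g=1$; the quadratic Taylor remainder in $f$ contributes $O(\delta^2)$ since $\delta V_{\delta,q}$ is one power of $\delta$ smaller than $W_{\delta,q}$ and $f$ is superlinear. Summing, $\|R\|_g=O(\delta^2)$.

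The computation is essentially the one in \cite{GMP18}, and the main obstacle is organizational rather than conceptual: one must verify that every first-order term in the metric expansion is absorbed by the correction $\delta V_{\delta,q}$, so that no $O(\delta)$ term survives. This is precisely why $\gamma_q$ is defined through equation (\ref{eq:vqdef}) with right-hand side $2h_{ij}(q)t\,\partial^2_{ij}U$: that source term is exactly what the order-$\delta$ part of $(L_{\hat g}-\Delta)W_{\delta,q}$ produces, so the two cancel and one is left only with $O(\delta^2)$ contributions. The remaining care is in checking that the boundary-integral terms involving $h_g$ vanish (we work in the metric $\tilde g$ with $h_{\tilde g}\equiv 0$, and the rescaled metric still has zero mean curvature to leading order) and that the decay exponents in (\ref{eq:gradvq}) are strong enough — here $n\ge 7$ guarantees integrability of the relevant convolution integrals, as encoded in formula (\ref{eq:ALstimagreen}). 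I would therefore present the proof as: reduce to estimating $\langle\langle R,\varphi\rangle\rangle_g$ by duality, rescale, invoke the defining equations for $U$ and $\gamma_q$ to cancel the $O(1)$ and $O(\delta)$ terms, and then bound the residual $O(\delta^2)$ terms term-by-term using the decay estimates and Hölder's inequality, referring to \cite{GMP18} for the routine details.
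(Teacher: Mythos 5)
Your proposal is correct and follows essentially the same route as the paper: the paper's proof tests $\Gamma-W_{\delta,q}-\delta V_{\delta,q}$ (with $\Gamma=i_{\alpha}^{*}(f(W_{\delta,q}+\delta V_{\delta,q}))$) against itself and divides out its norm, which is exactly your duality argument, and then integrates by parts to bound the interior residual in $L^{\frac{2n}{n+2}}(M)$ and the boundary residual in $L^{\frac{2(n-1)}{n}}(\partial M)$ by $O(\delta^{2})$, using the defining equations of $U$ and $\gamma_{q}$ for the cancellation and citing \cite{GMP18} for the detailed computations, just as you do.
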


\begin{proof}
Take the unique $\Gamma$ such that 
\[
\Gamma=i_{\alpha}^{*}\left(f(W_{\delta,q}+\delta V_{\delta,q})\right),
\]
that is the function solving 
\[
\left\{ \begin{array}{ccc}
-\Delta_{g}\Gamma+\frac{n-2}{4(n-1)}R_{g}\Gamma+\varepsilon_{1}\alpha\Gamma=0 &  & \text{on }M;\\
\frac{\partial\Gamma}{\partial\nu}=(n-2)\left((W_{\delta,q}+\delta V_{\delta,q})^{+}\right)^{\frac{n}{n-2}} &  & \text{on \ensuremath{\partial}}M.
\end{array}\right.
\]
We have, by (\ref{eq:prodscal}) that 
\begin{align*}
\|R\|_{g}^{2} & \le\left\Vert i_{g}^{*}\left(f(W_{\delta,q}+\delta V_{\delta,q}\right)-W_{\delta,q}-\delta V_{\delta,q}\right\Vert _{g}^{2}=\|\Gamma-W_{\delta,q}-\delta V_{\delta,q}\|_{g}^{2}\\
 & =\int_{M}\left[\Delta_{g}(W_{\delta,q}+\delta V_{\delta,q})-\frac{n-2}{4(n-1)}R_{g}(W_{\delta,q}+\delta V_{\delta,q})\right](\Gamma-W_{\delta,q}-\delta V_{\delta,q})d\mu_{g}\\
 & -\int_{M}\varepsilon_{1}\alpha(W_{\delta,q}+\delta V_{\delta,q})(\Gamma-W_{\delta,q}-\delta V_{\delta,q})d\mu_{g}\\
 & +\int_{\partial M}\left[f(W_{\delta,q}+\delta V_{\delta,q})-\frac{\partial}{\partial\nu}(W_{\delta,q}+\delta V_{\delta,q})\right](\Gamma-W_{\delta,q}-\delta V_{\delta,q})d\sigma_{g}\\
 & =:I_{1}+I_{2}+I_{3}.
\end{align*}
For $I_{1}$ we have 
\[
I_{1}\le\left|\Delta_{g}(W_{\delta,q}+\delta V_{\delta,q})-\frac{n-2}{4(n-1)}R_{g}(W_{\delta,q}+\delta V_{\delta,q})\right|_{L_{g}^{\frac{2n}{n+2}}(M)}\|R\|_{g}
\]
and direct computation and by the expansions of the metric (\ref{eq:|g|-1})
(\ref{eq:gij-1}) we have (see \cite[Lemma 9]{GMP18}) 
\[
\left|W_{\delta,q}+\delta^{2}V_{\delta,q}\right|_{L_{\tilde{g}}^{\frac{2n}{n+2}}(M)}=O(\delta^{2}),
\]
\[
\left|\Delta_{\tilde{g}}(W_{\delta,q}+\delta^{2}V_{\delta,q})\right|_{L_{\tilde{g}}^{\frac{2n}{n+2}}(M)}=O(\delta^{2}).
\]
Similarly 
\[
I_{2}\le\varepsilon_{1}O(\delta^{2})\left\Vert R\right\Vert _{g}=O(\delta^{2})\left\Vert R\right\Vert _{g}.
\]
The proof of estimate for $I_{3}$ is more delicate, and uses in a
crucial way that $\gamma_{q}$ solves (\ref{eq:vqdef}). As shown
in \cite[Lemma 9]{GMP18} we have indeed 
\[
I_{3}\le O(\delta^{2})\left\Vert R\right\Vert _{g}
\]
 which completes the proof.
\end{proof}
\begin{lem}
\label{lem:L}Given $(\varepsilon_{1},\varepsilon_{2})$, for any
pair $(\delta,q)$ there exists a positive constant $C=C(\delta,q)$
such that for any $\varphi\in K_{\delta,q}^{\bot}$ it holds 
\[
\|L(\varphi)\|_{g}\ge C\|\varphi\|_{g}.
\]
\end{lem}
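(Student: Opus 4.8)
The statement is the standard invertibility (coercivity) estimate for the linearized operator restricted to $K_{\delta,q}^{\bot}$ in a Lyapunov--Schmidt scheme. The plan is to argue by contradiction: suppose there exist sequences $\delta_m\to$ (some limit), $q_m\to \bar q$, and $\varphi_m\in K_{\delta_m,q_m}^{\bot}$ with $\|\varphi_m\|_g=1$ but $\|L(\varphi_m)\|_g\to 0$. I would then show that, after rescaling around $q_m$ at scale $\delta_m$, the functions $\varphi_m$ converge (weakly in $\mathcal D^{1,2}(\mathbb R^n_+)$, strongly in $L^2_{\mathrm{loc}}$ and on the boundary in the relevant trace sense) to a limit $\varphi_\infty$ which solves the linearized problem \eqref{eq:linearizzato} on the half-space. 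Since $\varphi_m\in K_{\delta_m,q_m}^{\bot}$, i.e.\ $\langle\langle\varphi_m,Z_{\delta_m,q_m}^b\rangle\rangle_g=0$ for $b=1,\dots,n$, passing to the limit gives that $\varphi_\infty$ is $L^2$-orthogonal (or orthogonal in the natural Dirichlet-type product) to all $j_b$, $b=1,\dots,n$; but $j_1,\dots,j_n$ span the entire space of $H^1$-solutions of \eqref{eq:linearizzato}, so $\varphi_\infty\equiv 0$.

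\textbf{Recovering the contradiction.} Once $\varphi_\infty\equiv 0$, I would use the equation $L(\varphi_m)=\Pi^\bot\{\varphi_m-i_\alpha^*(f'(W_{\delta_m,q_m}+\delta_m V_{\delta_m,q_m})[\varphi_m])\}$ to show $\|\varphi_m\|_g\to 0$, contradicting $\|\varphi_m\|_g=1$. Concretely, write $\varphi_m = L(\varphi_m) + \Pi^\bot i_\alpha^*(f'(W_{\delta_m,q_m}+\delta_m V_{\delta_m,q_m})[\varphi_m]) + (\text{term from }\Pi,\text{ which vanishes since }\varphi_m\in K^\bot)$. The first summand tends to $0$ in $\|\cdot\|_g$ by hypothesis. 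For the second, by definition of $i_\alpha^*$ and the embedding $H^1(M)\hookrightarrow L^{\frac{2(n-1)}{n-2}}(\partial M)$ one has $\|i_\alpha^*(f'(W+\delta V)[\varphi_m])\|_g \le C\,|f'(W+\delta V)[\varphi_m]|_{L^{\frac{2(n-1)}{n}}(\partial M)}\le C\,|(W+\delta V)^{\frac{2}{n-2}}\varphi_m|_{L^{\frac{2(n-1)}{n}}(\partial M)}$, and this can be shown to tend to $0$: the weight $(W_{\delta_m,q_m}+\delta_m V_{\delta_m,q_m})^{2/(n-2)}$ concentrates at $q_m$ at scale $\delta_m$, so after rescaling the integral is controlled by $\int_{\mathbb R^{n-1}} U^{2/(n-2)}|\varphi_\infty|$ type quantities plus a tail, both of which vanish (the first because $\varphi_\infty=0$, the tail by the decay of $U$ and the $L^{2(n-1)/(n-2)}$ bound on $\varphi_m$). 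Hence $\|\varphi_m\|_g\to 0$, the desired contradiction.

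\textbf{Main obstacle.} The delicate point is the rescaling/compactness step: one must carefully track the cutoff $\chi$, the fact that the background metric $g$ (after the conformal change making $h_g\equiv 0$) in Fermi coordinates converges to the Euclidean metric on $\mathbb R^n_+$ only locally, and that the presence of the extra lower-order terms $\varepsilon_1\alpha$ in the operator $i_\alpha^*$ and $\delta V_{\delta,q}$ in the nonlinearity do not affect the limit linearized problem (they contribute vanishing terms because $\varepsilon_1\alpha$ is a bounded zeroth-order perturbation and $\delta V_{\delta,q}\to 0$ relative to $W_{\delta,q}$ after rescaling, by the decay estimate \eqref{eq:gradvq}). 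One also has to justify that the orthogonality conditions survive in the limit, which requires knowing that $Z_{\delta_m,q_m}^b$ rescales precisely to $j_b$ and that $\langle\langle\cdot,Z^b\rangle\rangle_g$ rescales to the appropriate pairing on the half-space. All of these are by now standard in the Lyapunov--Schmidt literature for boundary Yamabe-type problems (see \cite{GMP18}), so I would carry out the contradiction argument in full but refer to that reference for the routine verification of the rescaling limits. Note that $C$ is allowed to depend on $(\delta,q)$ in the statement, which makes the claim slightly weaker than a uniform estimate; nonetheless the contradiction argument naturally yields uniformity over $q$ in a compact neighborhood and over $\delta$ in a compact subinterval of $(0,1)$, which is what is actually used later.
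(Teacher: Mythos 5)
Your proposal is correct and coincides with the standard argument: the paper itself omits the proof of Lemma \ref{lem:L}, referring to \cite{EPV14,MP09}, and the contradiction/rescaling scheme you outline (blow-up at scale $\delta$, identification of the limit with a solution of the linearized half-space problem \eqref{eq:linearizzato}, elimination of the kernel via the orthogonality to $Z^b_{\delta,q}$, and the concentration/tail splitting to conclude $\|\varphi_m\|_g\to 0$) is precisely the proof given in those references. Your closing remark is also apt: the argument yields uniformity of $C$ in $q$ and in $\delta$ small, which is what is actually needed downstream.
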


This lemma is a standard tool in finite dimensional reduction, so
we refer to \cite{EPV14,MP09} for the proof.

Proving that $N$ is a contraction it is also standard. In fact there
exists $\eta<1$ such that, for any $\varphi_{1},\varphi_{2}\in K_{\delta,q}^{\bot}$
it holds 
\begin{equation}
\|N(\varphi)\|_{g}\le\eta\|\varphi\|_{g}\text{ and }\|N(\varphi_{1})-N(\varphi_{2})\|_{g}\le\eta\|\varphi_{1}-\varphi_{2}\|_{g}\label{eq:Ncontr}
\end{equation}
By Lemma \ref{lem:R}, Lemma \ref{lem:L}, and by (\ref{eq:Ncontr})
we get the last result of this subsection.
\begin{prop}
\label{prop:EsistenzaPhi}Given $(\varepsilon_{1},\varepsilon_{2})$,
for any pair $(\delta,q)$ there exists a unique $\phi=\phi_{\delta,q}\in K_{\delta,q}^{\bot}$
which solves (\ref{eq:Pibot}) such that 
\[
\|\phi\|_{g}=O(\delta^{2}+\varepsilon_{2}\delta).
\]
In addition the map $q\mapsto\phi$ is $C^{1}$.
\end{prop}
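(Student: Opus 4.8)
The plan is to turn the infinite dimensional equation (\ref{eq:Pibot}) into a fixed point problem on a small ball of $K_{\delta,q}^{\bot}$ and apply the contraction mapping principle, as in \cite{GMP18}; the only genuinely new ingredient with respect to the unperturbed case is the control of the term carrying $\varepsilon_{2}\beta$. Recall that, in the notation just introduced, (\ref{eq:Pibot}) is equivalent to
\[
L(\phi)=N(\phi)+R-\Pi^{\bot}\left\{ i_{\alpha}^{*}\left(\varepsilon_{2}\beta(W_{\delta,q}+\delta V_{\delta,q}+\phi)\right)\right\}.
\]
First I would absorb the part of the right hand side which is linear in $\phi$ into the operator on the left, setting $\widetilde{L}(\phi):=L(\phi)+\Pi^{\bot}\{i_{\alpha}^{*}(\varepsilon_{2}\beta\phi)\}$. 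Since $\Pi^{\bot}$ and $i_{\alpha}^{*}$ are bounded one has $\|\Pi^{\bot}i_{\alpha}^{*}(\varepsilon_{2}\beta\phi)\|_{g}\le C\varepsilon_{2}\|\phi\|_{g}$, so for $\varepsilon_{2}$ small $\widetilde{L}$ is a small perturbation of $L$ and, by Lemma \ref{lem:L}, it is invertible with $\|\widetilde{L}^{-1}\|$ bounded (uniformly in $q\in\partial M$ for $\delta$ small). Hence (\ref{eq:Pibot}) reads $\phi=T(\phi)$, with
\[
T(\phi):=\widetilde{L}^{-1}\left(N(\phi)+R-\Pi^{\bot}\left\{ i_{\alpha}^{*}\left(\varepsilon_{2}\beta(W_{\delta,q}+\delta V_{\delta,q})\right)\right\}\right).
\]

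Next I would estimate the two source terms. Lemma \ref{lem:R} gives $\|R\|_{g}=O(\delta^{2})$. For the other term, boundedness of $i_{\alpha}^{*}:L^{\frac{2(n-1)}{n}}(\partial M)\to H^{1}(M)$ yields
\[
\left\|\Pi^{\bot}i_{\alpha}^{*}\left(\varepsilon_{2}\beta(W_{\delta,q}+\delta V_{\delta,q})\right)\right\|_{g}\le C\varepsilon_{2}\left(\|W_{\delta,q}\|_{L^{\frac{2(n-1)}{n}}(\partial M)}+\delta\|V_{\delta,q}\|_{L^{\frac{2(n-1)}{n}}(\partial M)}\right),
\]
and a direct rescaling computation (using $W_{\delta,q}(z,0)=\delta^{\frac{n-2}{2}}(\delta^{2}+|z|^{2})^{-\frac{n-2}{2}}\chi(z)$ on $\partial M$, and the decay (\ref{eq:gradvq}) of $\gamma_{q}$) shows that for $n\ge7$ both $\|W_{\delta,q}\|_{L^{\frac{2(n-1)}{n}}(\partial M)}$ and $\|V_{\delta,q}\|_{L^{\frac{2(n-1)}{n}}(\partial M)}$ are $O(\delta)$, whence this term is $O(\varepsilon_{2}\delta)$. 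Combining this with the superlinearity of $N$ in (\ref{eq:Ncontr}) --- whose Lipschitz constant on a ball of radius $\rho$ is $o(1)$ as $\rho\to0$ --- one verifies that, choosing $C_{0}$ large and then $\delta,\varepsilon_{2}$ small, $T$ maps the ball $\mathcal{B}=\{\phi\in K_{\delta,q}^{\bot}:\|\phi\|_{g}\le C_{0}(\delta^{2}+\varepsilon_{2}\delta)\}$ into itself and is a contraction there. The Banach fixed point theorem then gives a unique $\phi=\phi_{\delta,q}\in\mathcal{B}$ solving (\ref{eq:Pibot}), with $\|\phi_{\delta,q}\|_{g}=O(\delta^{2}+\varepsilon_{2}\delta)$.

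For the $C^{1}$ dependence of $q\mapsto\phi_{\delta,q}$ I would invoke the implicit function theorem applied to the map sending $(q,\phi)$ to the left hand side of (\ref{eq:Pibot}), read through the Fermi chart at $q$ so that the target space does not move with $q$: its differential in $\phi$ at $\phi_{\delta,q}$ is $\widetilde{L}$ plus the differential of $N$ at $\phi_{\delta,q}$, hence invertible because $\widetilde{L}$ is invertible and $DN(\phi_{\delta,q})$ is small, while the dependence on $q$ is $C^{1}$ since $W_{\delta,q}$, $V_{\delta,q}$, $Z_{\delta,q}^{b}$ and the projection $\Pi^{\bot}$ depend $C^{1}$ on $q$ --- here one uses that Fermi coordinates are smooth in $q$ and that $q\mapsto\gamma_{q}$ is $C^{2}$ by Lemma \ref{lem:vq}. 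I expect the only point requiring real attention to be the estimate of the $\varepsilon_{2}\beta$ term, in particular the bound $\|W_{\delta,q}\|_{L^{\frac{2(n-1)}{n}}(\partial M)}=O(\delta)$ which is what produces the $\varepsilon_{2}\delta$ in the statement; the remaining steps are a routine adaptation of \cite{GMP18,EPV14,MP09}, the only care being to keep all constants uniform in $q$ and to track their dependence on $(\varepsilon_{1},\varepsilon_{2})$.
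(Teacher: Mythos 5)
Your proposal is correct and follows essentially the same route as the paper: a contraction mapping argument on the ball $\|\phi\|_{g}\le C(\delta^{2}+\varepsilon_{2}\delta)$ of $K_{\delta,q}^{\bot}$, using Lemma \ref{lem:R} for the $O(\delta^{2})$ source, the bound $\|i_{\alpha}^{*}(\varepsilon_{2}\beta(W_{\delta,q}+\delta V_{\delta,q}))\|_{g}=O(\varepsilon_{2}\delta)$ via the $L^{\frac{2(n-1)}{n}}(\partial M)$ norm of the ansatz, and the implicit function theorem for the $C^{1}$ dependence on $q$. The only (immaterial) difference is that you absorb the linear term $\Pi^{\bot}i_{\alpha}^{*}(\varepsilon_{2}\beta\phi)$ into the invertible operator, whereas the paper keeps it on the right-hand side and absorbs the resulting $\varepsilon_{2}\|\phi\|_{g}$ into the contraction constant.
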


\begin{proof}
Lemma \ref{lem:L} and (\ref{eq:Ncontr}) and by the properties of
$i_{\alpha}$, there exists $C>0$ such that 
\begin{multline*}
\left\Vert L^{-1}\left(N(\phi)+R-\Pi^{\bot}\left\{ i_{\alpha}^{*}\left(\varepsilon_{2}\beta(W_{\delta,q}+\delta V_{\delta,q}+\phi)\right)\right\} \right)\right\Vert _{g}\\
\le C\left((\eta\|\phi\|_{g}+\|R\|_{g}+\left\Vert i_{\alpha}^{*}\left(\varepsilon_{2}\beta(W_{\delta,q}+\delta V_{\delta,q}+\phi)\right)\right\Vert _{g}\right).
\end{multline*}
Now, it is easy to estimate that
\begin{align}
\left\Vert i_{\alpha}^{*}\left(\varepsilon_{2}\beta(W_{\delta,q}+\delta V_{\delta,q}+\phi)\right)\right\Vert _{g} & \le\varepsilon_{2}\left(\left\Vert W_{\delta,q}+\delta V_{\delta,q}\right\Vert _{L_{g}^{\frac{2(n-1)}{n}}(\partial M)}+\left\Vert \phi\right\Vert _{g}\right)\nonumber \\
 & \le C\left(\varepsilon_{2}\delta+\varepsilon_{2}\left\Vert \phi\right\Vert _{g}\right).\label{eq:e2beta}
\end{align}
By Lemma \ref{lem:R} and by the previous estimates, for the map 
\[
T(\tilde{\phi}):=L^{-1}\left(N(\tilde{\phi})+R-\Pi^{\bot}\left\{ i_{\alpha}^{*}\left(\varepsilon_{2}\beta(W_{\delta,q}+\delta V_{\delta,q}+\phi)\right)\right\} \right)
\]
it holds
\[
\|T(\phi)\|_{g}\le C\left((\eta+\varepsilon_{2})\|\phi\|_{g}+\varepsilon_{2}\delta+\delta^{2}\right).
\]
So, it is possible to choose $\rho>0$ such that $T$ is a contraction
from the ball $\|\phi\|_{g}\le\rho(\varepsilon_{2}\delta+\delta^{2})$
in itself. Hence, by the fixed point Theorem, we have the first claim.
The second claim is proved by the implicit function Theorem.
\end{proof}

\subsection{\label{subsec:The-reduced-functional}The reduced functional}

Once a solution of Problem (\ref{eq:Pibot}) is found, it is possible
to look for a critical point of $J_{g}\left(W_{\delta,q}+\delta V_{\delta,q}+\phi\right)$,
solving a finite dimensional problem which depends only on $(\delta,q)$.
\begin{lem}
\label{lem:JWpiuPhi}Assume $n\ge7$. It holds 
\[
\left|J_{g}\left(W_{\delta,q}+\delta V_{\delta,q}+\phi\right)-J_{g}\left(W_{\delta,q}+\delta V_{\delta,q}\right)\right|=o(1)\left\Vert \phi\right\Vert _{g}
\]
$C^{0}$-uniformly for $q\in\partial M$.
\end{lem}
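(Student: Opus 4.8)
The plan is to expand $J_g$ around $W_{\delta,q}+\delta V_{\delta,q}$ and estimate the error produced by adding the correction $\phi$, exploiting both the smallness of $\phi$ established in Proposition \ref{prop:EsistenzaPhi} and the fact that $\phi\in K_{\delta,q}^{\bot}$ is orthogonal to the approximate kernel. First I would write, by Taylor expansion of the functional,
\[
J_g(W_{\delta,q}+\delta V_{\delta,q}+\phi)-J_g(W_{\delta,q}+\delta V_{\delta,q})
=J_g'(W_{\delta,q}+\delta V_{\delta,q})[\phi]+\frac12 J_g''(W_{\delta,q}+\delta V_{\delta,q})[\phi,\phi]+\mathcal{R}(\phi),
\]
where $\mathcal{R}(\phi)$ collects the higher-order terms coming only from the nonlinearity $\left(v^+\right)^{\frac{2(n-1)}{n-2}}$. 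The quadratic form $J_g''[\phi,\phi]$ is controlled by $\|\phi\|_g^2$, which is $O(\delta^2+\varepsilon_2\delta)^2$ and hence $o(1)\|\phi\|_g$; similarly, since $\frac{2(n-1)}{n-2}>2$ for $n\ge7$, the remainder $\mathcal{R}(\phi)$ is superquadratic in $\|\phi\|_g$ (this is exactly the estimate underlying \eqref{eq:Ncontr}), so it is also $o(1)\|\phi\|_g$.

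The main work is the linear term $J_g'(W_{\delta,q}+\delta V_{\delta,q})[\phi]$. Here I would use that $\phi$ solves \eqref{eq:Pibot}, or equivalently that the gradient of $J_g$ at $W_{\delta,q}+\delta V_{\delta,q}+\phi$ lies in $K_{\delta,q}$: writing $\nabla J_g(W_{\delta,q}+\delta V_{\delta,q})[\phi] = \langle\!\langle W_{\delta,q}+\delta V_{\delta,q}-i_\alpha^*(f(W_{\delta,q}+\delta V_{\delta,q})-\varepsilon_2\beta(W_{\delta,q}+\delta V_{\delta,q})),\phi\rangle\!\rangle_g$, and recognizing this as an inner product with $\Pi^{\bot}$ applied to the same quantity since $\phi\in K_{\delta,q}^{\bot}$, one gets a bound by $\|R\|_g\|\phi\|_g$ plus the $\varepsilon_2\beta$ contribution plus an $f'$-correction term. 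By Lemma \ref{lem:R}, $\|R\|_g=O(\delta^2)$, and by \eqref{eq:e2beta} the $\varepsilon_2\beta$ piece is $O(\varepsilon_2\delta)$; both are $o(1)$, so the whole linear term is $o(1)\|\phi\|_g$. Combining the three pieces yields the claim.

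The step I expect to be the main obstacle is organizing the linear term cleanly: one must be careful that $J_g'(W_{\delta,q}+\delta V_{\delta,q})$ is not itself $\Pi^\bot$ of anything a priori, so the orthogonality of $\phi$ has to be inserted by hand, and the decomposition of $f(W_{\delta,q}+\delta V_{\delta,q}+\phi)$ into $f(W_{\delta,q}+\delta V_{\delta,q})+f'(W_{\delta,q}+\delta V_{\delta,q})[\phi]+N$-type remainder must be tracked so that the $f'$-term is absorbed into the $O(\|\phi\|_g^2)$ part via Lemma \ref{lem:L} and \eqref{eq:Ncontr}. Since the paper explicitly says the estimates of this type are in \cite{GMP18}, I would keep the proof to this skeleton, citing \cite[Lemma 9]{GMP18} and the contraction estimate \eqref{eq:Ncontr} for the routine bounds, and emphasize only that every term on the right-hand side of the Taylor expansion is either superlinear in $\|\phi\|_g$ or carries a factor $\|R\|_g+\varepsilon_2\delta = o(1)$, which is precisely the content of $o(1)\|\phi\|_g$ uniformly in $q$.
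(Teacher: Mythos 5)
Your proposal is correct and follows the same overall skeleton as the paper: a second-order Taylor expansion of $J_g$ at $W_{\delta,q}+\delta V_{\delta,q}$, with the quadratic (and higher) contributions absorbed because $\|\phi\|_g=O(\delta^{2}+\varepsilon_{2}\delta)=o(1)$ by Proposition \ref{prop:EsistenzaPhi}. The treatment of the linear term is, however, packaged differently. The paper estimates $J_g'(W_{\delta,q}+\delta V_{\delta,q})[\phi]$ directly, term by term: after integration by parts the delicate piece is $\int_{\partial M}\bigl[(n-2)\bigl((W_{\delta,q}+\delta V_{\delta,q})^{+}\bigr)^{\frac{n}{n-2}}-\tfrac{\partial}{\partial\nu}W_{\delta,q}\bigr]\phi\,d\sigma_g$, bounded in the $L^{\frac{2(n-1)}{n}}(\partial M)$ dual norm by $o(|\delta\log\delta|)\|\phi\|_g$ via the estimates of \cite{GMP18}, together with a separate H\"older bound $O(\delta)\|\phi\|_g$ for the $\varepsilon_1\alpha$ term; note that this route never uses $\phi\in K_{\delta,q}^{\bot}$, so the conclusion holds for any small $\phi$, and the Lagrange form $\tfrac12 J_g''(W_{\delta,q}+\delta V_{\delta,q}+\theta\phi)[\phi,\phi]$ dispenses with a separate superquadratic remainder estimate. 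You instead use the variational identity $J_g'(v)[\phi]=\langle\langle v-i_{\alpha}^{*}(f(v)-\varepsilon_{2}\beta v),\phi\rangle\rangle_g$ and the $\langle\langle\cdot,\cdot\rangle\rangle_g$-orthogonality of $\phi$ to $K_{\delta,q}$ to reduce the linear term to $(\|R\|_g+C\varepsilon_{2}\delta)\|\phi\|_g$ via Lemma \ref{lem:R} and (\ref{eq:e2beta}); this is legitimate (the projections are orthogonal for $\langle\langle\cdot,\cdot\rangle\rangle_g$, and Lemma \ref{lem:R} in fact bounds even the unprojected quantity), it recycles work already done instead of re-deriving the boundary estimate, and it even gives the sharper rate $O(\delta^{2}+\varepsilon_{2}\delta)\|\phi\|_g$. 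Two small points to tidy up: you do not actually need that $\phi$ solves (\ref{eq:Pibot}), only that $\phi\in K_{\delta,q}^{\bot}$ together with the norm bound of Proposition \ref{prop:EsistenzaPhi}; and since $\frac{2(n-1)}{n-2}\in(2,3)$ for $n\ge7$, the expansion-at-the-base-point remainder should be justified by the elementary inequality bounding the second-order Taylor rest of $s\mapsto (s^{+})^{p}$ by $C|\phi|^{p}$ (which is indeed the estimate behind (\ref{eq:Ncontr})), or avoided altogether by using the intermediate-point form of $J_g''$ as the paper does.
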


\begin{proof}
We have, for some $\theta\in(0,1)$ 
\begin{multline}
J_{g}(W_{\delta,q}+\delta V_{\delta,q}+\phi)-J_{g}(W_{\delta,q}+\delta V_{\delta,q})=J_{g}'(W_{\delta,q}+\delta V_{\delta,q})[\phi]\\
+\frac{1}{2}J_{g}''(W_{\delta,q}+\delta V_{\delta,q}+\theta\phi)[\phi,\phi]\\
=\int_{M}\left(\nabla_{g}W_{\delta,q}+\delta\nabla_{g}V_{\delta,q}\right)\nabla_{g}\phi+\left(\frac{n-2}{4(n-1)}R_{g}+\varepsilon_{1}\alpha\right)\left(W_{\delta,q}+\delta V_{\delta,q}\right)\phi d\mu_{g}\\
-(n-2)\int_{\partial M}\left(\left(W_{\delta,q}+\delta V_{\delta,q}\right)^{+}\right)^{\frac{n}{n-2}}\phi d\sigma_{g}\\
+\int_{\partial M}\varepsilon_{2}\beta\left(W_{\delta,q}+\delta V_{\delta,q}\right)\phi d\sigma_{g}+\frac{1}{2}\|\phi\|_{g}^{2}\\
-\frac{n}{2}\int_{\partial M}\left(\left(W_{\delta,q}+\delta V_{\delta,q}+\theta\phi_{\delta,q}\right)^{+}\right)^{\frac{2}{n-2}}\phi_{\delta,q}^{2}d\sigma_{g}+\frac{1}{2}\int_{\partial M}\varepsilon_{2}\beta|\phi|^{2}d\sigma_{g}.\label{eq:J''}
\end{multline}
All the terms but $\int_{M}\varepsilon_{1}\alpha\left(W_{\delta,q}+\delta V_{\delta,q}\right)\phi d\mu_{g}$
have been estimated in \cite[Lemma 12]{GMP18}, so we summarize only
the key steps. As in Lemma \ref{lem:R}, the most delicate term is
the nonlinear term on the boundary. In particular we have that 
\begin{multline*}
\int_{\partial M}\left[(n-2)\left(\left(W_{\delta,q}+\delta V_{\delta,q}\right)^{+}\right)^{\frac{n}{n-2}}-\frac{\partial}{\partial\nu}W_{\delta,q}\right]\phi d\sigma_{g}\\
=\left|(n-2)\left(\left(W_{\delta,q}+\delta V_{\delta,q}\right)^{+}\right)^{\frac{n}{n-2}}-\frac{\partial}{\partial\nu}W_{\delta,q}\right|_{L^{\frac{2(n-1)}{n}}(\partial M)}\|\phi\|_{g}\\
=o(|\delta\log\delta|)\|\phi\|_{g}=o(1)\|\phi\|_{g}.
\end{multline*}
The other terms in (\ref{eq:J''}) are easier to estimate and lead
to higher order terms. 

At last, by Holder inequality we have 
\begin{align*}
\left|\int_{M}W_{\delta,q}\phi d\mu_{g}\right| & \le C|W_{\delta,q}|_{L_{g}^{\frac{2n}{n+2}}}|\phi|_{L_{g}^{\frac{2n}{n-2}}}\le C\delta^{2}\|\phi\|_{g}\\
\delta\left|\int_{M}V_{\delta,q}\phi d\mu_{g}\right| & \le C\delta|V_{\delta,q}|_{L_{g}^{2}}|\phi|_{L_{g}^{2}}\le C\delta\|\phi\|_{g}.
\end{align*}
so 
\[
\int_{M}\varepsilon_{1}\alpha\left(W_{\delta,q}+\delta V_{\delta,q}\right)\phi d\mu_{g}=O(\delta)\|\phi\|_{g}
\]
and we are in position to prove the result.
\end{proof}
\begin{lem}
\label{lem:espansione}Let $n\ge7$. It holds
\[
J_{g}(W_{\delta,q}+\delta V_{\delta,q})=A+\varepsilon_{1}\delta^{2}\alpha(q)B+\varepsilon_{2}\delta\beta(q)C+\delta^{2}\varphi(q)+o(\varepsilon_{1}\delta^{2})+o(\varepsilon_{2}\delta)+o(\delta^{2})
\]
where
\begin{align*}
A= & \frac{(n-2)(n-3)}{2(n-1)^{2}}\omega_{n-2}I_{n-1}^{n}>0;\\
B= & \frac{n-2}{(n-1)(n-4)}\omega_{n-2}I_{n-1}^{n}>0;\\
C= & \frac{n-2}{n-1}\omega_{n-2}I_{n-1}^{n}>0;\\
\varphi(q) & =\frac{1}{2}\int_{\mathbb{R}_{+}^{n}}\gamma_{q}\Delta\gamma_{q}dy-\frac{(n-6)(n-2)\omega_{n-1}I_{n-1}^{n}}{4(n-1)^{2}(n-4)}\|\pi(q)\|^{2}\le0.
\end{align*}
Here $I_{n-1}^{n}:=\int_{0}^{\infty}\frac{s^{n}}{(1+s^{2})^{n-1}}ds$
and $\pi(q)$ is the trace free tensor of the second fundamental form.
\end{lem}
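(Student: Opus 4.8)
The plan is to expand $J_g(W_{\delta,q}+\delta V_{\delta,q})$ term by term, isolating the contributions at order $\delta^2$, at order $\varepsilon_1\delta^2$ and at order $\varepsilon_2\delta$. I would organize the functional as
\[
J_g(W_{\delta,q}+\delta V_{\delta,q})=\frac12\|W_{\delta,q}+\delta V_{\delta,q}\|_g^2-\frac{(n-2)^2}{2(n-1)}\int_{\partial M}\big((W_{\delta,q}+\delta V_{\delta,q})^+\big)^{\frac{2(n-1)}{n-2}}d\sigma_g+\frac{\varepsilon_2}{2}\int_{\partial M}\beta(W_{\delta,q}+\delta V_{\delta,q})^2 d\sigma_g,
\]
where the first two pieces already include the $\varepsilon_1\alpha$ contribution inside $\|\cdot\|_g$. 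The $\varepsilon_2$-term is the easiest: passing to Fermi coordinates at $q$, rescaling $y=\delta x$, and using $\|\gamma_q\|$ decay from \eqref{eq:gradvq}, one finds $\frac{\varepsilon_2}{2}\int_{\partial M}\beta W_{\delta,q}^2 d\sigma_g = \varepsilon_2\delta\,\beta(q)\,\frac12\int_{\mathbb{R}^{n-1}}U(0,z)^2dz + o(\varepsilon_2\delta)$, which gives the constant $C$; the $V_{\delta,q}$ cross term and the $V_{\delta,q}^2$ term are of higher order. Likewise, the $\varepsilon_1$-part of $\|W_{\delta,q}+\delta V_{\delta,q}\|_g^2$ is $\varepsilon_1\int_M \alpha W_{\delta,q}^2 d\mu_g + o(\varepsilon_1\delta^2) = \varepsilon_1\delta^2\alpha(q)\int_{\mathbb{R}_+^n}U^2\,dy + o(\varepsilon_1\delta^2)$, which produces $B$ (here one uses $n\ge7$ so that $\int_{\mathbb{R}_+^n}U^2<\infty$, i.e. $2\cdot\frac{n-2}{2}=n-2>\frac n2$ requires $n>4$, but the error control needs $n\ge7$ as in \cite{GMP18}).

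The heart of the computation is the $(\varepsilon_1,\varepsilon_2)$-independent part,
\[
J_g^0(W_{\delta,q}+\delta V_{\delta,q}):=\frac12\int_M\Big(|\nabla_g(W_{\delta,q}+\delta V_{\delta,q})|^2+\tfrac{n-2}{4(n-1)}R_g(W_{\delta,q}+\delta V_{\delta,q})^2\Big)d\mu_g-\frac{(n-2)^2}{2(n-1)}\int_{\partial M}\big((W_{\delta,q}+\delta V_{\delta,q})^+\big)^{\frac{2(n-1)}{n-2}}d\sigma_g,
\]
which is exactly the unperturbed boundary-Yamabe energy of the corrected bubble. I would quote the corresponding expansion from \cite[Prop.~5.1, Prop.~7.1]{Al} (or equivalently reproduce it): after the conformal normalization $h_g\equiv0$, using the metric expansion \eqref{eq:|g|-1}--\eqref{eq:gin} and the defining equation \eqref{eq:vqdef} for $\gamma_q$, the $O(\delta)$ term vanishes by the oddness of $j_l$ and by \eqref{eq:Uvq}, while the $O(\delta^2)$ term collects (i) the curvature integral producing the $\|\pi(q)\|^2$ coefficient $-\frac{(n-6)(n-2)\omega_{n-1}I_{n-1}^{n}}{4(n-1)^2(n-4)}$, which is exactly $R(U+\delta^2\gamma_q,U+\delta^2\gamma_q)$ stripped of its $\delta^2$ factor as in Lemma \ref{lem:R(U,U)}, and (ii) the quadratic form $\frac12\int_{\mathbb{R}_+^n}\gamma_q\Delta\gamma_q\,dy$ coming from the $|\nabla V_{\delta,q}|^2$ contribution together with the boundary term after integrating by parts against \eqref{eq:vqdef}. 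The leading constant $A$ is the standard value $J_{\mathbb{R}_+^n}(U)=\frac{(n-2)(n-3)}{2(n-1)^2}\omega_{n-2}I_{n-1}^n$, obtained from the exact identities for $U$ on $\mathbb{R}_+^n$.

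Finally, the sign $\varphi(q)\le0$ follows from \eqref{new} of Lemma \ref{lem:vq}, which gives $\int_{\mathbb{R}_+^n}\gamma_q\Delta\gamma_q\,dy\le0$, together with the fact that the $\|\pi(q)\|^2$ coefficient is negative for $n\ge7$ (since $n-6\ge1>0$ and all other factors are positive); positivity of $A,B,C$ is immediate from $I_{n-1}^n>0$. The main obstacle is the careful bookkeeping of error terms in $J_g^0$: one must verify that all the $O(\delta|\log\delta|)$ and $O(\delta^{5/2})$-type remainders arising from the cutoff $\chi$, from replacing $d\mu_g,d\sigma_g$ by their Euclidean counterparts, and from the Taylor remainder of the nonlinear boundary term, are genuinely $o(\delta^2)$ — this is exactly where $n\ge7$ is used (the borderline integrals $\int_{\mathbb{R}_+^n}|\gamma_q|^2$, $\int |y|^2 U^{\frac{2(n-1)}{n-2}}$ etc. converge precisely in this range), and I would handle it by citing the detailed estimates of \cite[Lemmas 9--12]{GMP18} and \cite[Sections 5, 7]{Al} rather than recomputing them.
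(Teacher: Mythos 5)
Your proposal is correct and follows essentially the same route as the paper: the $\varepsilon_{1}$- and $\varepsilon_{2}$-terms are computed directly by rescaling in Fermi coordinates, giving $\tfrac{1}{2}\varepsilon_{1}\delta^{2}\alpha(q)\int_{\mathbb{R}^{n}_{+}}U^{2}$ and $\tfrac{1}{2}\varepsilon_{2}\delta\beta(q)\int_{\mathbb{R}^{n-1}}U(\bar{y},0)^{2}$ (hence $B$ and $C$), while the unperturbed part is quoted from \cite[Proposition 13]{GMP18} as $A+\delta^{2}\varphi(q)+o(\delta^{2})$, and the sign of $\varphi$ comes from (\ref{new}). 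One small inaccuracy: the $\delta^{2}$-coefficient of the energy is not literally $R(U+\delta^{2}\gamma_{q},U+\delta^{2}\gamma_{q})$ from Lemma \ref{lem:R(U,U)} --- that lemma concerns the Pohozaev quantity, whose $\|\pi\|^{2}$ constant and whose sign in front of $\tfrac{1}{2}\int_{\mathbb{R}^{n}_{+}}\gamma_{q}\Delta\gamma_{q}\,dy$ both differ from those appearing in $\varphi(q)$ --- but since you defer the actual computation to \cite{Al} and \cite{GMP18}, exactly as the paper does, this does not affect the argument.
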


\begin{proof}
The main estimates of this proof are proved in \cite[Proposition 13]{GMP18},
which we refer for to for a detailed proof. Here, we limit ourselves
to estimate the perturbation terms. We have
\begin{align*}
J_{g}(W_{\delta,q}+\delta V_{\delta,q})= & \frac{1}{2}\int_{M}|\nabla_{g}(W_{\delta,q}+\delta V_{\delta,q})|^{2}d\mu_{g}+\frac{n-2}{8(n-1)}\int_{M}R_{g}(W_{\delta,q}+\delta V_{\delta,q})^{2}d\mu_{g}\\
 & +\frac{1}{2}\varepsilon_{1}\int_{M}\alpha(W_{\delta,q}+\delta V_{\delta,q})^{2}d\mu_{g}\\
 & +\frac{1}{2}\varepsilon_{2}\int_{\partial M}\beta(W_{\delta,q}+\delta V_{\delta,q})^{2}d\sigma_{g}\\
 & -\frac{(n-2)^{2}}{2(n-1)}\int_{\partial M}\left(W_{\delta,q}+\delta V_{\delta,q}\right)^{\frac{2(n-1)}{n-2}}.
\end{align*}
We easily compute the terms involving $\varepsilon_{1}$ and $\varepsilon_{2}$
taking in account the expansion of the volume form (\ref{eq:|g|-1}),
getting
\[
\frac{1}{2}\varepsilon_{1}\int_{M}\alpha(W_{\delta,q}+\delta V_{\delta,q})^{2}d\mu_{g}=\frac{1}{2}\varepsilon_{1}\delta^{2}\alpha(q)\int_{\mathbb{R}^{n}}U(y)^{2}dy+o(\varepsilon_{1}\delta^{2})
\]
and 
\[
\frac{1}{2}\varepsilon_{2}\int_{\partial M}\beta(W_{\delta,q}+\delta V_{\delta,q})^{2}d\mu_{g}=\frac{1}{2}\varepsilon_{2}\delta\beta(q)\int_{\mathbb{R}^{n-1}}U(\bar{y},0)^{2}d\bar{y}+o(\varepsilon_{2}\delta).
\]
By direct computation, and by Remark 18 in \cite{GMP18} (see also
\cite[page 1332]{GMP18}) we have that $\int_{\mathbb{R}^{n}}U(y)^{2}dy=\frac{2(n-2)}{(n-1)(n-4)}\omega_{n-2}I_{n-1}^{n}$
and $\int_{\mathbb{R}^{n-1}}U(\bar{y},0)^{2}d\bar{y}=\frac{2(n-2)}{n-1}\omega_{n-2}I_{n-1}^{n}$,
getting the value of the positive constants $B$ and $C$. 

For the remaining terms we refer to \cite[Proposition 13]{GMP18}
in which is proved that 
\begin{multline*}
\frac{1}{2}\int_{M}|\nabla_{\tilde{g}_{q}}(W_{\delta,q}+\delta V_{\delta,q})|^{2}d\mu_{g}+\frac{n-2}{8(n-1)}\int_{M}R_{g}(W_{\delta,q}+\delta V_{\delta,q})^{2}d\mu_{g}\\
-\frac{(n-2)^{2}}{2(n-1)}\int_{\partial M}\left(W_{\delta,q}+\delta V_{\delta,q}\right)^{\frac{2(n-1)}{n-2}}d\sigma_{g_{q}}=A+\delta^{2}\varphi(q)+o(\delta^{2}).
\end{multline*}
We conclude by noticing that $\varphi(q)\le0$ by (\ref{new}).
\end{proof}

\subsection{Proof of Theorem \ref{thm:main2}}

At first we recall that, in the hypotheses of Theorem \ref{thm:main2},
we have that the function $\varphi$ defined in Lemma \ref{lem:espansione}
is strictly negative on $\partial M$. Infact $\|\pi(q)\|^{2}$ is
non zero by assumption, and $\int_{\mathbb{R}_{+}^{n}}\gamma_{q}\Delta\gamma_{q}$
is non positive by (\ref{new}). With this in mind, we are in position
to prove the result.
\begin{proof}[Proof of Theorem \ref{thm:main2}]
. We start with the first case, $\beta>0$. We choose 
\begin{align*}
\varepsilon_{1} & =o(1)\\
\delta & =\lambda\varepsilon_{2}
\end{align*}
where $\lambda\in\mathbb{R}^{+}$. With this choice, by Lemma \ref{lem:JWpiuPhi}
and Proposition \ref{prop:EsistenzaPhi} we have that 
\[
\left|J_{g}\left(W_{\lambda\varepsilon_{2},q}+\lambda\varepsilon_{2}V_{\lambda\varepsilon_{2},q}+\phi\right)-J_{g}\left(W_{\lambda\varepsilon_{2},q}+\lambda\varepsilon_{2}V_{\lambda\varepsilon_{2},q}\right)\right|=o(\varepsilon_{2}^{2})
\]
and that, by Lemma \ref{lem:espansione}, 
\[
J_{g}\left(W_{\lambda\varepsilon_{2},q}+\lambda\varepsilon_{2}V_{\lambda\varepsilon_{2},q}\right)=A+\varepsilon_{2}^{2}\left(\lambda\beta(q)C+\lambda^{2}\varphi(q)\right)+o(\varepsilon_{2}^{2}).
\]

We recall a result which is a key tool in Ljapunov-Schmidt procedure,
and which is proved, for instance, in \cite[Lemma 15]{GMP18}) and
which relies on the estimates of Lemma \ref{lem:JWpiuPhi}.
\begin{rem*}
Given $(\varepsilon_{1},\varepsilon_{2}),$ if $(\bar{\lambda},\bar{q})\in(0,+\infty)\times\partial M$
is a critical point for the reduced functional $I_{\varepsilon_{1},\varepsilon_{2}}(\lambda,q):=J_{g}\left(W_{\lambda\varepsilon_{2},q}+\lambda\varepsilon_{2}V_{\lambda\varepsilon_{2},q}+\phi\right)$,
then the function $W_{\bar{\lambda}\varepsilon_{2},\bar{q}}+\bar{\lambda}\varepsilon_{2}V_{\bar{\lambda}\varepsilon_{2},\bar{q}}+\phi$
is a solution of (\ref{eq:Prob-2}). 
\end{rem*}
To conclude the proof it lasts to find a pair $(\bar{\lambda},\bar{q})$
which is a critical point for $I_{\varepsilon_{1},\varepsilon_{2}}(\lambda,q)$. 

In this first case we consider $G(\lambda,q):=\lambda\beta(q)C+\lambda^{2}\varphi(q)$.
We have that $\beta(q)C$ is strictly positive on $\partial M$, by
our assumptions, while, as recalled before, $\varphi$ is strictly
negative on $\partial M$. At this point there exists a compact set
$[a,b]\subset\mathbb{R}^{+}$ such that the function $G$ admits an
absolute maximum in $(a,b)\times\partial M$, which also is the absolute
maximum value of $G$ on $\mathbb{R}^{+}\times\partial M$. This maximum
is also $C^{0}$-stable, in the sense that, if $(\lambda_{0},q_{0})$
is the maximum point for $G$, for any function $f\in C^{1}([a,b]\times\partial M)$
with $\|f\|_{C^{0}}$ sufficiently small, then the function $G+f$
on $[a,b]\times\partial M$ admits a maximum point $(\bar{\lambda},\bar{q})$
close to $(\lambda_{0},q_{0})$. By the $C_{0}$ stability of this
maximum point $(\lambda_{0},q_{0})$, and by Lemma \ref{lem:espansione},
given $\varepsilon_{2}$ sufficiently small (and $\varepsilon_{1}=o(1)$),
there exists a pair $\left(\lambda_{\varepsilon_{1},\varepsilon_{2}},q_{\varepsilon_{1},\varepsilon_{2}}\right)$
which is a maximum point for $I_{\varepsilon_{1},\varepsilon_{2}}(\lambda,q)$.
This implies, in light of the above Remark, that there exists a pair
$\left(\bar{\lambda}_{\varepsilon_{1},\varepsilon_{2}},\bar{q}_{\varepsilon_{1},\varepsilon_{2}}\right)$
such that $W_{\bar{\lambda}_{\varepsilon_{1},\varepsilon_{2}}\varepsilon_{2},\bar{q}_{\varepsilon_{1},\varepsilon_{2}}}+\bar{\lambda}_{\varepsilon_{1},\varepsilon_{2}}\varepsilon_{2}V_{\bar{\lambda}_{\varepsilon_{1},\varepsilon_{2}}\varepsilon_{2},\bar{q}_{\varepsilon_{1},\varepsilon_{2}}}+\phi$
is a solution of (\ref{eq:Prob-2}), and the proof for the case $\beta>0$
is complete.

The proof in the second case is similar. In this case, by assumption,
we have that $B\alpha(q)+\varphi(q)>0$ on $\partial M$. Then we
choose 
\begin{align*}
\varepsilon_{1} & =1\\
\delta & =\lambda\varepsilon_{2}
\end{align*}
and we obtain that 
\[
I_{\varepsilon_{1},\varepsilon_{2}}(\lambda,q)=A+\varepsilon_{2}^{2}\left[\lambda\beta(q)C+\lambda^{2}\left(\alpha(q)B+\varphi(q)\right)\right]+o(\varepsilon_{2}^{2}).
\]
In this case we define the function $G(\lambda,q)$ as 
\[
G(\lambda,q):=\lambda\beta(q)C+\lambda^{2}\left(\alpha(q)B+\varphi(q)\right).
\]
and, by our assumptions, the coefficient of $\lambda$ is strictly
negative on $\partial M$ while the coefficient of $\lambda^{2}$
is strictly positive on $\partial M$, so we can conclude the proof
follows in a similar way. 
\end{proof}

\end{document}